\newtheorem{thm}{Theorem}[section]
\newtheorem*{thm*}{Theorem}
\newtheorem{lem}[thm]{Lemma}
\newtheorem*{lem*}{Lemma}
\newtheorem{prop}[thm]{Proposition}
\theoremstyle{definition}
\renewcommand{\thecase}{}
\newtheorem*{case*}{Case}
\newtheorem*{defn*}{Definition}
\newtheorem*{exmp*}{Example}
\newtheorem{rmk}[thm]{Remark}
\newtheorem*{rmk*}{Remark}
\newtheorem{step}{Step}\renewcommand{\thestep}{}
\theoremstyle{remark}
\def\alphenumi{
  \def\theenumi{\alph{enumi}}
  \def\p@enumi{\theenumi}
  \def\labelenumi{(\@alph\c@enumi)}}
\def\thecase{\@arabic\c@case}
\def\thestep{\@arabic\c@step}
\def\hhmm{\number\hh:\ifnum\mm<10{}0\fi\number\mm}
\let\oldmarginpar\marginpar
\renewcommand\marginpar[1]{\-\oldmarginpar[\raggedleft\footnotesize #1]%
{\raggedright\footnotesize #1}}
\newcommand\dotprod{\hbox{$\cdot$}}
\newcommand\CC{\mathbb{C}}
\newcommand\NN{\mathbb{N}}
\newcommand\QQ{\mathbb{Q}}
\newcommand\RR{\mathbb{R}}
\newcommand\cC{{\mathcal{C}}}
\newcommand\cF{{\mathcal{F}}}
\newcommand\cH{{\mathcal{H}}}
\newcommand\cI{{\mathcal{I}}}
\newcommand\cS{{\mathcal{S}}}
\newcommand\loc{\operatorname{loc}}
\newcommand\supp{\operatorname{supp}}
\newcommand\pr{\operatorname{pr}}
\newcommand\tb{{\tilde b}}
\newcommand\ty{{\tilde y}}
\newcommand\tbeta{{\tilde{\beta}}}
\numberwithin{equation}{section}
\newcommand{\hL}{\widehat{L}}
\newcommand{\hu}{\widehat{u}}
\begin{document}

\title[Fractional Laplacian with drift]{Regularity for the supercritical fractional Laplacian with drift}

\author[C. Epstein]{Charles L. Epstein}
\address[CE]{Department of Mathematics, University of Pennsylvania, 209 South
  33rd Street, Philadelphia, PA 19104-6395. Research of
    C.L.~Epstein partially supported by NSF grant
    DMS12-05851 and ARO grant W911NF-12-1-0552.}
\email{cle@math.upenn.edu}

\author[C. Pop]{Camelia A. Pop}
\address[CP]{Department of Mathematics, University of Pennsylvania, 209 South 33rd Street, Philadelphia, PA 19104-6395}
\email{cpop@math.upenn.edu}

\begin{abstract}
  We consider the linear stationary equation defined by the fractional
  Laplacian with drift. In the supercritical case, that is the case when the
  dominant term is given by the drift instead of the diffusion component, we
  prove local regularity of solutions in Sobolev spaces employing tools from
  the theory of pseudo-differential operators. The regularity of solutions in
  the supercritical case is as expected in the subcritical case, when the
  diffusion is at least as strong as the drift component, and the operator
  defined by the fractional Laplacian with drift can be viewed as an elliptic
  operator, which is not the case in the supercritical regime. We compute the
  leading part of the singularity of the Green's kernel, which display some
  rather unusual behavior when $0<s<\frac 12.$
\end{abstract}

%

\subjclass[2010]{Primary 35H99; secondary 60G22}
\keywords{Fractional Laplacian, pseudo-differential operators, Sobolev spaces, jump diffusion processes, symmetric stable processes, Markov processes}


\maketitle

\tableofcontents

\section{Introduction}
\label{sec:Intro}
We consider the linear operator defined by the fractional Laplacian with drift,
\begin{equation*}
Au(x) : = \left(-\Delta\right)^s u(x) + \nu(x)\dotprod\nabla u (x),
\end{equation*}
for $u \in \cS(\RR^n)$,  the Schwartz space
\cite[Definition (3.3.3)]{Taylor_vol1} consisting of smooth functions whose
derivatives of all order decrease faster than any polynomial at infinity. Here
$\nu:\RR^n\rightarrow\RR^n$ is a smooth, tempered vector field. The action of
the fractional Laplace operator $(-\Delta)^s$ on the space of Schwartz functions is defined through its Fourier representation by
\begin{equation*}
\widehat{(-\Delta)^s u}(\xi) = |\xi|^{2s}\widehat u(\xi).
\end{equation*}
The range of the parameter $s$ of particular interest is the interval
$(0,1)$. In our article, we study the case when $s\in (0,1/2)$. Recall that
$\cS'(\RR^n)$ is the dual space of $\cS(\RR^n),$ the space of tempered distributions.

The fractional Laplacian plays the same role in the theory of non-local
operators that the Laplacian plays in the theory of local elliptic
operators. For this reason, the regularity of solutions to equations defined by
the fractional Laplacian and its gradient perturbation is intensely studied in
the literature. It has applications to the study of a number of nonlinear
equations, as for example the quasi-geostrophic equation
\cite{Caffarelli_Vasseur_2010, Constantin_Wu_1999, Constantin_Wu_2008}, Burgers
equation \cite{Chan_Czubak_Silvestre_2010, Kiselev_2011}, and to the study of
the regularity of solutions and of the free boundary in the obstacle problem
defined by the fractional Laplacian with drift \cite{Petrosyan_Pop}. As the
infinitesimal generator of a Markov process with jumps described by a
$\alpha$-stable L\'evy process, the properties of its fundamental solution and
Green's function are studied in articles such as \cite{Bogdan_Jakubowski_2007,
  Chen_Kim_Song_2012, Jakubowski_2011, Jakubowski_Szczypkowsi_2010}, among
others.

The operator $A$ can be viewed as a pseudo-differential operator using the representation
\begin{equation}
\label{eq:Operator_pseudo}
Au(x) = (2\pi)^{-n} \int_{\RR^n} e^{ix\xi} \left(|\xi|^{2s} + i \nu(x)\dotprod\xi\right) \widehat u (\xi) \ d\xi,\quad \forall u \in \cS(\RR^n),
\end{equation}
with associated symbol,
\begin{equation}
\label{eq:Symbol_L}
a(x,\xi) := |\xi|^{2s} + i \nu(x)\dotprod\xi, \quad \forall x, \xi \in \RR^n.
\end{equation}
Strictly speaking $A$ is not a pseudo-differential operator because its symbol,
$a(x,\xi)$, is not smooth at $\xi=0$. For this reason $A$ does not map
$\cS(\RR^n)$ to itself. To facilitate our analysis we choose a smooth cut-off
function, $\varphi:\RR^n\rightarrow [0,1]$, such that
\begin{equation}
\label{eq:Cutoff_varphi}
\varphi(\xi)=0\quad\hbox{for } |\xi|<1,\quad\hbox{and}\quad \varphi(\xi)=1\quad\hbox{for } |\xi|>2,
\end{equation}
and we consider the symbol defined by
\begin{equation}
\label{eq:Cut_symbol}
\tilde a(x,\xi):=|\xi|^{2s}\varphi(2\xi)+i\nu(x)\dotprod\xi,\quad\forall x, \xi \in \RR^n.
\end{equation}
Let $\delta,\rho \in [0,1]$ and $m \in \RR$. We recall the definition of H\"ormander class of symbols, $S^m_{\rho,\delta}(\RR^n\times\RR^n)$, \cite[Definition (7.1.4)]{Taylor_vol2} which consists of smooth functions $p(x,\xi)$ such that, for all multi-indices $\alpha, \beta \in\NN^n$, there is a positive constant, $C_{\alpha,\beta}$, such that the following hold
$$
|D^{\alpha}_xD^{\beta}_{\xi}p(x,\xi)| \leq C_{\alpha,\beta} (1+|\xi|)^{m-\rho|\beta|+\delta|\alpha|},\quad\forall x,\xi\in\RR^n.
$$
To any symbol $p\in S^m_{\rho,\delta}(\RR^n\times\RR^n)$, we can associate the operator
$$
T_p u(x) := (2\pi)^{-n} \int_{\RR^n} e^{ix\xi} p(x,\xi) \widehat u (\xi) \ d\xi,\quad \forall u \in \cS(\RR^n),
$$
and the class of such operators is denoted by $OPS^m_{\rho,\delta}(\RR^n)$. 

In the language of pseudo-differential operators, we have that the symbol $\tilde a$ defined by \eqref{eq:Cut_symbol} belongs to the H\"ormander class $S^m_{1,0}(\RR^n\times\RR^n)$, where the order of the symbol is $m=1 \vee 2s$, and 
\begin{equation}
\label{eq:Decomposition}
Au = T_{\tilde a} u + Eu, \quad \forall u \in \cS(\RR^n),
\end{equation}
where we define the operator $E$ by an inverse Fourier transform,
\begin{equation}
\label{eq:Definition_E}
Eu(x):=\cF^{-1}\left[ |\xi|^{2s} (1-\varphi(2\xi)) \widehat u (\xi)\right],\quad \forall u \in \cS(\RR^n).
\end{equation}
Because the function $\xi\mapsto|\xi|^{2s}$ is not smooth at $\xi=0$, we see that $|\xi|^{2s}\widehat u$ is not necessarily a tempered distribution, when $u \in \cS'(\RR^n)$. We introduce the space
$$
\cS'_{(0,s)}(\RR^n):=\left\{u\in\cS'(\RR^n):|\xi|^{2s}\widehat u \in \cS'(\RR^n)\right\}.
$$
This is largely a condition on the behavior of the distribution at
``infinity.'' This space is the largest subspace of $\cS'(\RR^n)$ on which
$(-\Delta)^s$ makes sense.  We also let
\begin{equation}
  \hL^2_{\loc}(\RR^n)=\{u\in\cS'(\RR^n):\: \hu\in L^2_{\loc}(\RR^n)\}.
\end{equation}

If $u\in\cS'_{(0,s)}(\RR^n),$ then $|\xi|^{2s} (1-\varphi(2\xi)) \widehat u$ is a compactly supported distribution, and we have that $Eu(x)\in\cC^{\infty}_t(\RR^n),$ the space of smooth
functions all of whose derivatives have tempered growth, that is
$$
\cC^{\infty}_t(\RR^n):=\cC^{\infty}(\RR^n)\cap\cS'(\RR^n).
$$
From identity \eqref{eq:Decomposition}, and the fact that $T_{\tilde a}:\cS'(\RR^n)\rightarrow \cS'(\RR^n)$, we also have that
\begin{equation*}
A: \cS'_{(0,s)}(\RR^n) \rightarrow \cS'(\RR^n).
\end{equation*}
As $T_{\tilde a}$ is a classical pseudo-differential operator it also maps $\cS(\RR^n)$ to itself.

We recall that a tempered distribution, $u\in\cS'(\RR^n)$, belongs to the
Sobolev space, $H^m(\RR^n)$, where $m$ is a real number, if
$(1+|\xi|^2)^{m/2}\widehat u(\xi) \in L^2(\RR^n).$ We say that $u \in
H^m_{\loc}(\RR^n)$, if for any function  $\chi\in
\cC^{\infty}_c(\RR^n)$, we have that $\chi u$ belongs to $H^m(\RR^n)$. Using the facts that
$1-\varphi(2\xi)$ is a compactly supported, smooth function, and
$$
(1+|\xi|^2)^{m/2} \widehat{Eu}(\xi) = (1+|\xi|^2)^{m/2} |\xi|^{2s} (1-\varphi(2\xi)) \widehat u (\xi),\quad\forall m \in \RR,
$$
we obtain that if $u\in\hL^2_{\loc}(\RR^n),$ then $(1+|\xi|^2)^{m/2}
\widehat{Eu}$ belongs to $L^2(\RR^n).$ Thus the preceding argument shows that
\begin{align}
\label{eq:Definition_E_with_domain_distrib}
&E: \cS'_{(0,s)}(\RR^n) \rightarrow \cC^{\infty}_t(\RR^n)\\
\label{eq:Definition_E_with_domain_Sobolev}
&E: \hL^{2}_{\loc}(\RR^n) \rightarrow H^m(\RR^n),\quad\forall m \in \RR.
\end{align}

In this article, we establish local regularity in Sobolev spaces of solutions to the equation defined by the fractional Laplacian with drift, 
\begin{equation}
\label{eq:Equation}
Au(x)=f(x),\quad\forall x\in\RR^n,
\end{equation}
where the source function, $f$, is assumed to belong locally to the Sobolev
space $H^l(\RR^n)$, for some real constant $l$. Our strategy is to prove
existence of a two-sided parametrix for the pseudo-differential operator
$T_{\tilde a}$, and use it to establish the local regularity in Sobolev spaces
of solutions to equation \eqref{eq:Equation}. The construction of a two-sided
parametrix of the pseudo-differential operator $T_{\tilde a}$ is non-trivial in
the case when $s\in (0,1/2)$, the so-called supercritical regime, because the
symbol $\tilde a(x,\xi)$ is \emph{not} elliptic. When $s=1/2$ or $s\in
(1/2,1)$, the so-called critical and subcritical regime \cite[\S
1]{Silvestre_2012a, Caffarelli_Vasseur_2010}, respectively, $\tilde a(x,\xi)$ is
an elliptic symbol in the class $S^{2s}_{1,0}(\RR^n)$, and the existence of a
two-sided parametrix follows from well-known results \cite[\S
7.4]{Taylor_vol2}.  In this case, we assume that $u \in \cS'_{(0,s)}(\RR^n)$
is a solution to equation \eqref{eq:Equation} with source function $f\in
H^l_{\loc}(U)$, for $U$ an open set. The existence of a two-sided parametrix of
the operator $T_{\tilde a}$, together with identity \eqref{eq:Decomposition}
and \eqref{eq:Definition_E_with_domain_distrib}, show that $u$ also belongs to
$H^{l+2s}(V)$ for $V\subset\subset U.$ In the former case, when $s\in (0,1/2)$,
the drift dominates the diffusion component in the definition
\eqref{eq:Cut_symbol} of the symbol $\tilde a(x,\xi)$. The operator $\tilde a(x,D)$
is no longer elliptic and the existence of a parametrix for $T_{\tilde a}$ is
not obvious. Even so, by applying a change of coordinates dictated by the
vector field $\nu(x)$, we are able to build a two-sided parametrix for the
operator $T_{\tilde a}$, in the new system of coordinates. We use this property
to prove our main result. We let $\psi:\RR^n\rightarrow [0,1]$ be a smooth
cut-off function such that
\begin{equation}
\label{eq:Psi}
\psi(x)=1 \hbox{ for } |x|<1, \hbox{ and } \psi(x)=0 \hbox{ for } |x|>2,
\end{equation}
and we let 
\begin{equation}
\label{eq:Psi_r}
\psi_r(x)=\psi(x/r),\quad \forall r>0.
\end{equation}

We can now state
\begin{thm}[Local regularity of solutions]
\label{thm:Local_regularity}
Let $s\in (0,1/2)$, and let $x_0 \in \RR^n$. Assume that $\nu:\RR^n\rightarrow
\RR^n$ is a smooth vector field in a neighborhood of $x_0$, and that $\nu(x_0)
\neq 0$. Let  $l$ be a real constant, and assume that $u \in \cS'_{(0,s)}(\RR^n)$ is such
that $\psi_r(x-x_0) Au(x) \in H^l(\RR^n)$, for some positive constant $r$, where
$\psi_r$ is defined by \eqref{eq:Psi_r}. Then there is a positive constant,
$r_0<r$, such that for any smooth function, $\chi:\RR^n\rightarrow [0,1]$, with
compact support in $B_{r_0}(x_0)$, the function $\chi u \in H^{l+2s}(\RR^n)$
and $\chi\nu\dotprod\nabla u \in H^l(\RR^n)$.
\end{thm}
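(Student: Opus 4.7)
The plan is to reduce the statement to a parametrix construction for the pseudo-differential operator $T_{\tilde a}$. By the decomposition \eqref{eq:Decomposition} and the mapping properties \eqref{eq:Definition_E_with_domain_distrib}--\eqref{eq:Definition_E_with_domain_Sobolev}, the error $Eu$ is always regular enough to be absorbed once one knows $u$ is locally in $\hL^{2}_{\loc}$ near $x_0$. The real work is to invert $T_{\tilde a}$, modulo smoothing operators, in a small neighborhood of $x_0$. The main obstacle is that in the supercritical regime $s\in(0,1/2)$ the symbol $\tilde a\in S^{1}_{1,0}(\RR^n\times\RR^n)$ is not elliptic: it vanishes to order $2s<1$ along the hyperplane $\{\nu(x)\dotprod\xi=0\}$, so the standard parametrix construction of \cite[\S 7.4]{Taylor_vol2} is unavailable.

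The first step is to straighten the drift. Because $\nu(x_0)\neq 0$ and $\nu$ is smooth near $x_0$, the flow-box theorem produces a smooth diffeomorphism $\Phi$ from a ball $B_{r_0}(x_0)$, with $r_0<r$ chosen small, onto an open set $V\subset\RR^n$ such that $\Phi_{*}\nu=\partial_{y_1}$. By the diffeomorphism invariance of the H\"ormander calculus, the conjugate of $T_{\tilde a}$ by $\Phi$ is a pseudo-differential operator $T_b\in OPS^{1}_{1,0}$ whose symbol has the form
$$
b(y,\eta)=|(D\Phi(y))^{-T}\eta|^{2s}\varphi(2\eta)+i\eta_1+r(y,\eta),\qquad r\in S^{2s-1}_{1,0},
$$
and satisfies the hypoelliptic lower bound $|b(y,\eta)|\gtrsim|\eta_1|+|\eta|^{2s}$ uniformly for $y$ in a neighborhood of $\Phi(x_0)$ and $|\eta|$ large, courtesy of $\nu(x_0)\neq 0$.

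The heart of the argument is the construction of a two-sided parametrix $Q$ with symbol $q\in S^{-2s}_{1,0}$ such that $QT_b=I+R_1$ and $T_bQ=I+R_2$ with $R_1,R_2\in OPS^{-\infty}$. The leading term is $q_0(y,\eta)=\chi_0(\eta)/b(y,\eta)$, where $\chi_0$ vanishes near $\eta=0$; the lower bound on $b$, together with the symbol estimates inherited from $b\in S^{1}_{1,0}$, yields
$$
|D^\alpha_yD^\beta_\eta q_0(y,\eta)|\le C_{\alpha,\beta}\bigl(|\eta_1|+|\eta|^{2s}\bigr)^{-1}(1+|\eta|)^{-|\beta|}\le C_{\alpha,\beta}(1+|\eta|)^{-2s-|\beta|},
$$
so in particular $q_0\in S^{-2s}_{1,0}$. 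A standard asymptotic Neumann iteration $q\sim\sum_{j\ge 0}q_j$, with each successive $q_j$ lower in order by at least $1-2s>0$, then produces the required parametrix. Carrying this out rigorously, and verifying that the improvement $1-2s$ per iteration persists in the appropriate symbol class despite the loss of full ellipticity, is the main technical obstacle; it is precisely where the hypothesis $\nu(x_0)\neq 0$ is decisive.

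The final step is a routine localization. Choose nested cut-offs $\chi\prec\chi'\prec\psi_{r_0}(\cdot-x_0)$ supported in $B_{r_0}(x_0)$. The pseudo-local property of $T_{\tilde a}$ and $Q$, combined with \eqref{eq:Definition_E_with_domain_Sobolev}, yields
$$
\chi u=Q(\chi'Au)+\text{(smoothing terms applied to $u$)},
$$
so $\chi u\in H^{l+2s}(\RR^n)$ follows from $\chi'Au\in H^l(\RR^n)$ and $Q\in OPS^{-2s}_{1,0}$. For the drift term, the principal symbol of the composition $(\nu\dotprod\nabla)\circ Q$ is $i\nu(x)\dotprod\xi\cdot q_0(x,\xi)$, which is uniformly bounded since $|\nu(x)\dotprod\xi|\le|\tilde a(x,\xi)|$ by the very definition of $\tilde a$. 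Consequently $(\nu\dotprod\nabla)Q$ is $L^2$-bounded (and $H^l$-bounded for every $l$) by Calder\'on--Vaillancourt applied in the appropriate hypoelliptic class, and a second round of localization produces $\chi\nu\dotprod\nabla u\in H^l(\RR^n)$, completing the proof.
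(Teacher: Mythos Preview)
Your overall strategy---straighten the drift via the flow-box theorem, build a parametrix for the conjugated operator, then localize---is exactly the paper's approach. However, the parametrix step contains a genuine error.

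The displayed symbol estimate for $q_0$ is false. Take $\beta=e_1$ (the drift direction): since $\partial_{\eta_1}b=i+O(|\eta|^{2s-1})$, one has $|\partial_{\eta_1}q_0|\sim|b|^{-2}=(|\eta_1|+|\eta|^{2s})^{-2}$, which on the set $\{\eta_1=0\}$ is of size $|\eta|^{-4s}$. Your claimed bound there is $|\eta|^{-2s-1}$, and for $s<1/2$ one has $|\eta|^{-4s}\gg|\eta|^{-2s-1}$. Thus $q_0\notin S^{-2s}_{1,0}$; each $\eta_1$-derivative gains only a factor $|\eta|^{-2s}$, not $|\eta|^{-1}$. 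The correct class is $S^{-2s}_{2s,0}(\RR^n\times\RR^n)$, and this is not a cosmetic change. The whole point of straightening the drift is that afterwards $i\eta_1$ has no $y$-dependence, so $D^\alpha_y b_0\in S^{2s}_{1,0}$ for $|\alpha|\ge 1$ (the drift term drops out). That is what forces $\delta=0$ rather than $\delta=1-2s$ in the class of $q_0$, and it is precisely this that makes the composition calculus in $S^m_{2s,0}$ available for \emph{all} $s\in(0,1/2)$: one needs $\rho>\delta$, i.e.\ $2s>0$, instead of $2s>1-2s$. Your ``improvement of $1-2s$ per step'' is likewise off; the first remainder $r=q_0\# b-1$ lands in $S^{-2s}_{2s,0}$, and it is the negativity of the order $-2s$ that lets one invert $1+r$.

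There is a second, smaller gap in the localization. The identity $\chi u=Q(\chi'Au)+\text{(smoothing)}(u)$ does not hold in one shot: the commutator $[T_b,\chi]$ has order $0$, so $Q[T_b,\chi]$ has order $-2s$, not $-\infty$. Applied to a cutoff of $u$, which a priori lies only in some $H^k$, it gains just $2s$ derivatives. The paper closes this by a finite bootstrap with nested cutoffs $\chi_1\succ\chi_2\succ\cdots\succ\chi_{J+1}$, iterating roughly $(l-k)/2s$ times until $k+2sJ\ge l$.
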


\subsection{Comparison with previous research}
\label{subsec:Previous_research}
Caffarelli and Vasseur \cite[Theorem 3]{Caffarelli_Vasseur_2010} study the
regularity of solutions to the \emph{evolution} equation associated to the
fractional Laplacian with drift in the critical case $s=1/2$. They assume that
the drift coefficient belongs to the BMO class of functions and that it is a
divergence free vector field to obtain regularity of solutions in H\"older
spaces. Silvestre extends the possible values of the parameter $s$, which now
can be chosen so that $s\in(0,1)$, and relaxes the assumptions on the drift
coefficient to obtain H\"older continuity of solutions in space and time in
\cite[Theorem 1.1]{Silvestre_2012a}. In particular, the drift coefficient is no
longer assumed to be divergence free. Instead, it is required to be bounded,
when $s\in [1/2,1)$, and to be H\"older continuous $\cC^{1-2s}$, when $s\in
(0,1/2)$. This result is improved in \cite[Theorem 1.1]{Silvestre_2012b}, where
the $\cC^{1,\alpha}$ H\"older continuity of solutions in space is proved, in the
case when $s\in (0,1/2]$. In this case, the drift coefficient is required to
belong to $\cC^{1-2s+\alpha}$, for some $\alpha \in (0,2s)$.

The difference between our work and that of Caffarelli-Vasseur
\cite{Caffarelli_Vasseur_2010} and Silvestre
\cite{Silvestre_2012a,Silvestre_2012b} consists in that we consider the
stationary, instead of the evolution equation defined by the fractional
Laplacian with drift, and we prove regularity of solutions in Sobolev spaces
instead of H\"older spaces. We assume that the vector field, $\nu$, is smooth
in order to be able to employ techniques from the theory of pseudo-differential
operators, while the work of Caffarelli-Vasseur \cite{Caffarelli_Vasseur_2010}
and Silvestre \cite{Silvestre_2012a,Silvestre_2012b} relaxes the regularity
assumptions on the drift coefficient and employs De Giorgi's approach to
parabolic equations and comparison arguments in order to obtain regularity of
solutions.

The fundamental solution of the fractional Laplacian with drift is studied in
\cite{Jakubowski_Szczypkowsi_2010}, in the case when $s\in (1/2,1)$. Assuming
that the vector field $\nu$ satisfies a certain integral condition
\cite[Inequality (4)]{Jakubowski_Szczypkowsi_2010}, the authors prove existence
\cite[Theorem 1]{Jakubowski_Szczypkowsi_2010}, and upper and lower bounds of a
fundamental solution comparable with the fundamental solution of the fractional
Laplacian without drift \cite[Theorems 2 and
3]{Jakubowski_Szczypkowsi_2010}. Similar results are proved in
\cite{Bogdan_Jakubowski_2007} and \cite{Jakubowski_2011} in the case when $s\in
(1/2,1)$, under different integral conditions on the vector field $\nu$. In
\cite{Bogdan_Jakubowski_2007}, the vector field in assumed to belong to a Kato
class of functions \cite[Definition 10]{Bogdan_Jakubowski_2007}, while in
\cite{Jakubowski_2011} the vector field is assumed to be divergence free, to
satisfy \cite[Condition (2)]{Jakubowski_2011}, and a ``smallness'' condition
\cite[Theorem 1]{Jakubowski_2011}. The Green function for the fractional
Laplacian with drift on smooth domains, in the case when $s\in (1/2,1)$, is
studied in \cite{Bogdan_Jakubowski_2012} and \cite{Chen_Kim_Song_2012}, under
the assumption that the vector field belongs to the Kato class of functions.
Regularity of solutions to the equation defined by the fractional Laplacian
with drift may be obtained using upper and lower bounds satisfied by the
fundamental solution. The preceding results establish existence and properties
of the fundamental solution when $s\in (1/2,1)$, while our article establishes
regularity of solutions when $s\in (0,1/2)$, using a method that circumvents
the use of pointwise estimates of the fundamental solution. In the last section
we determine the behavior of the leading singularity of the Green's function in
the supercritical case.

\subsection{Outline of the article}
\label{subsec:Outline}
In \S \ref{sec:First_construction_parametrix}, we build a two-sided parametrix
for $T_{\tilde a}$ in Lemma \ref{lem:Parametrix}, in the case when $s\in (1/4,
1/2)$. We use Lemma \ref{lem:Parametrix} to prove global regularity of solutions
to equation \eqref{eq:Equation}, Lemma
\ref{eq:Regularity_solutions_restrictive_case}. Our method is applicable only
in the case when $s\in (1/4, 1/2)$, because we use as a first approximation of
the parametrix a pseudo-differential operator with symbol in the H\"ormander
class $S^m_{\rho,\delta}(\RR^n\times\RR^n)$ with $\delta < \rho$, and this
property breaks down when $s\in (0, 1/4]$. In \S
\ref{sec:Second_construction_parametrix}, we take a different approach based on
a suitable change of coordinates to prove the existence of a two-sided
parametrix for $T_{\tilde a}$, for all $s\in (0, 1/2)$. In \S
\ref{subsec:Change_of_coordinates}, we describe the effect of the coordinate
change on the pseudo-differential operator $T_{\tilde a}$ in Lemma
\ref{lem:Change_of_coordinates_cut_symbol}. The second method for constructing
the parametrix and a localization procedure is then employed in \S
\ref{subsubsec:Localization} to prove the main result of our article, Theorem
\ref{thm:Local_regularity}.

In the last section we compute the leading order singularity of the Green's
Function, i.e.~the kernel $K(x,z),$ for $A^{-1},$ i.e.
\begin{equation}
  A^{-1}f(x)=\int\limits_{\RR^n}K(x,x-y)f(y)dy.
\end{equation}
In the supercritical case, this kernel
displays a interesting new feature: at $z\to 0$ it is more singular in the half
plane $\{y:y\cdot\nu(x)>0\}$ than in the complementary half plane. This is
an echo of the fact that the ``Green's function,'' for $(-\Delta)^s+\partial_{x_n},$
which is essentially a heat kernel, is supported in the half space
$\{(x',x_n):\: x_n>0\}.$ We have a short appendix where we analyze this kernel
as well.

\subsection{Notations and conventions}
\label{subsec:Notation}
We adopt the following definitions of the Fourier transform and the inverse Fourier transform of a function $u\in\cS(\RR^n)$,
\begin{align*}
\cF u(\xi) &= \widehat u (\xi) = \int_{\RR^n} e^{-ix\dotprod\xi} u(x)\ dx,\\
\cF^{-1} u(x) &=  (2\pi)^{-n} \int_{\RR^n} e^{ix\dotprod\xi} u(\xi)\ d\xi.
\end{align*}
These operators extend by duality to $\cS'(\RR^n).$ We let
\begin{equation}
  \langle x\rangle =(1+|x|^2)^{\frac 12}.
\end{equation}
Denote by $OPS^{-\infty}(\RR^n)$ the set of smoothing operators, that is
$$
OPS^{-\infty}(\RR^N) := \cap_{m=1}^{\infty} OPS^{-m}_{1,0}(\RR^n).
$$ 
For $n,m$ positive integers, we let $\cC^{\infty}(\RR^n;\RR^m)$ be the space of
$\RR^m$-valued smooth functions, and we let $\cC^{\infty}_b(\RR^n;\RR^m)$ be
the subspace of $\cC^{\infty}(\RR^n;\RR^m)$ of smooth $\RR^m$-valued functions
with bounded derivatives of all orders. The space $\cC^{\infty}_c(\RR^n;\RR^m)$
consists of smooth $\RR^m$-valued functions with compact support. For brevity, when $m=1$, we
write $\cC^{\infty}(\RR^n)$, $\cC^{\infty}_b(\RR^n)$ and
$\cC^{\infty}_c(\RR^n)$ instead of $\cC^{\infty}(\RR^n;\RR)$,
$\cC^{\infty}_b(\RR^n;\RR)$ and $\cC^{\infty}_c(\RR^n;\RR)$, respectively.

We denote by $\NN$ the extended set of natural numbers, that is $\NN:=\{0,1,2,\ldots\}$. Given real numbers, $a$ and $b$, we let $a \wedge b:=\min\{a,b\}$ and $a \vee b:=\max\{a,b\}$. For a $\cC^1$ function, $\phi:\RR^n\rightarrow\RR^n$, we let $J\phi(x)$ denote the Jacobian matrix, and by $|J\phi(x)|$ the Jacobian determinant, for all $x\in\RR^n$. For a matrix, $A \in \RR^{d\times d}$, we let $A^T$ denote the transpose matrix.

\section{A first construction of a two-sided parametrix}
\label{sec:First_construction_parametrix}
We build a two-sided parametrix for $T_{\tilde a}$ in the case when $s\in
(1/4,1/2)$. Our method cannot be extended to the case when $s\leq 1/4$, because
the candidate for a first approximation of the parametrix is a
pseudo-differential operator with symbol in the H\"ormander class
$S^m_{\rho,\delta}(\RR^n\times\RR^n)$ with $\delta \geq \rho$, and so the
calculus of such symbols does not allow us to build an asymptotic expansion for
the composition in terms of symbols of decreasing orders. In \S
\ref{sec:Second_construction_parametrix}, we take a different approach to prove
the existence of a two-sided parametrix for $T_{\tilde a}$, for all $s\in
(0,1/2)$.

\begin{lem}[A two-sided parametrix for $T_{\tilde a}$]
\label{lem:Parametrix}
Let $s\in (1/4,1/2)$, and $\nu\in \cC^{\infty}_b(\RR^n;\RR^n)$. Then the operator $T_{\tilde a}$ has a two-sided parametrix, $T_q$, where $q(x,\xi)$ is a symbol in the H\"ormander class $S^{-2s}_{2s,1-2s}(\RR^n \times \RR^n)$, that is
\begin{align}
\label{eq:Left_sided_parametrix}
T_q  T_{\tilde a} u &= u \hbox{ mod } \cC^{\infty}(\RR^n), \quad\forall u \in \cS'(\RR^n),\\
\label{eq:Right_sided_parametrix}
T_{\tilde a}  T_q u &= u \hbox{ mod } \cC^{\infty}(\RR^n), \quad\forall u \in \cS'(\RR^n).
\end{align}
\end{lem}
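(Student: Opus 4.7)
The plan is to construct $q\sim\sum_{j\ge 0}q_j$ inside the H\"ormander class $S^{-2s}_{2s,1-2s}(\RR^n\times\RR^n)$ by approximately inverting $\tilde a$ modulo smoothing, imitating the standard elliptic parametrix construction but in a class of type $(\rho,\delta)$ rather than type $(1,0)$. The substitute for ellipticity is the pointwise lower bound
$$|\tilde a(x,\xi)|\ge |\xi|^{2s}\quad\text{for all }|\xi|\ge 1,$$
which follows from $\Real\tilde a=|\xi|^{2s}\varphi(2\xi)\ge 0$ together with $\varphi(2\xi)=1$ whenever $|\xi|\ge 1$. This bound is strictly weaker than the elliptic bound $|\xi|^{1}$ of the nominal order of $\tilde a$, and it is precisely what forces the parametrix to live in a class of order $-2s$ rather than $-1$.

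First I would choose a smooth cutoff $\chi_0\colon\RR^n\to[0,1]$ with $\chi_0\equiv 1$ on $|\xi|\le 1$ and $\chi_0\equiv 0$ on $|\xi|\ge 2$, and set $q_0(x,\xi):=(1-\chi_0(\xi))/\tilde a(x,\xi)$, which is smooth because $\tilde a=|\xi|^{2s}+i\nu(x)\cdot\xi$ is nonvanishing on $\supp(1-\chi_0)$. A direct induction combining the lower bound on $|\tilde a|$ with the fact that each $\partial_\xi\tilde a$ is bounded in $\xi$ (its dominant contribution being the constant-in-$\xi$ term $i\nu(x)$), while each $\partial_x\tilde a=i(\partial_x\nu)\cdot\xi$ grows only linearly in $\xi$, would yield
$$|\partial_x^\alpha\partial_\xi^\beta q_0(x,\xi)|\le C_{\alpha\beta}(1+|\xi|)^{-2s-2s|\beta|+(1-2s)|\alpha|},$$
so that $q_0\in S^{-2s}_{2s,1-2s}$.

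The hypothesis $s>1/4$ is exactly the condition $\rho-\delta=4s-1>0$ that makes the $(\rho,\delta)$ pseudodifferential calculus non-trivial: in the composition formula $a\#b\sim\sum_\alpha(\alpha!)^{-1}(-i)^{|\alpha|}\partial_\xi^\alpha a\,\partial_x^\alpha b$ the successive terms improve in order by $4s-1$. Applied to $q_0\#\tilde a$, the leading term is $q_0\tilde a=1-\chi_0$, which equals $1$ modulo the smoothing operator $T_{\chi_0}$ (smoothing because $\chi_0$ is compactly supported in $\xi$). I would then inductively solve for correctors $q_j\in S^{-2s-j(4s-1)}_{2s,1-2s}$ that successively cancel the lower-order terms in the expansion of $(q_0+q_1+\cdots+q_{j-1})\#\tilde a-1$, Borel-sum them to obtain $q\sim\sum q_j\in S^{-2s}_{2s,1-2s}$, and conclude that $T_q T_{\tilde a}-I\in OPS^{-\infty}(\RR^n)$, giving \eqref{eq:Left_sided_parametrix}. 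A parallel construction of a right parametrix $q'$ from $\tilde a\#q'\sim 1$, combined with the standard identity $q\equiv q\#\tilde a\#q'\equiv q'$ modulo $S^{-\infty}$, shows that the same $q$ is also a right parametrix, establishing \eqref{eq:Right_sided_parametrix}.

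The main obstacle — and the precise reason the argument collapses at $s=1/4$ — is the strict inequality $\delta<\rho$ required for the $(\rho,\delta)$ calculus to produce asymptotic expansions with strictly decreasing orders: when $s\le 1/4$ one has $1-2s\ge 2s$, so composition produces no gain in order, the asymptotic expansion ceases to telescope, and the inductive construction of the correctors $q_j$ breaks down. This is precisely why an entirely different approach — the change of coordinates pursued in \S\ref{sec:Second_construction_parametrix} — is needed to cover the full range $s\in(0,1/2)$.
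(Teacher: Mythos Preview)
Your proposal is correct and follows essentially the same approach as the paper: both start from the pointwise inverse $q_0=(1-\chi_0)/\tilde a\in S^{-2s}_{2s,1-2s}$, verify the symbol estimates via the lower bound $|\tilde a|\ge|\xi|^{2s}$, and then exploit the $(\rho,\delta)=(2s,1-2s)$ calculus with $\rho-\delta=4s-1>0$ to upgrade this to a true parametrix. The only cosmetic difference is that the paper corrects multiplicatively---writing $T_{q_0}T_{\tilde a}=I+T_r$ with $r\in S^{1-4s}_{2s,1-2s}$ and then invoking the standard parametrix for the elliptic zeroth-order operator $I+T_r$---whereas you iterate additively and Borel-sum the correctors $q_j$; these are equivalent standard variants of the same construction.
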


\begin{proof}
We build a left-parametrix, but the construction is the same for the right-parametrix. We let
$$
q_1(x,\xi):=\frac{\varphi(\xi)}{\tilde a(x,\xi)}, \quad \forall x, \xi \in \RR^n,
$$
where $\varphi:\RR^n \rightarrow [0,1]$ is a smooth cut-off function chosen as in \eqref{eq:Cutoff_varphi}. We recall that the symbol $\tilde a(x,\xi)$ is defined in \eqref{eq:Cut_symbol}. We consider a first approximation of the parametrix given by the pseudo-differential operator
$$
T_{q_1} u(x) = (2\pi)^{-n} \int_{\RR^n} e^{ix\xi} \frac{\varphi(\xi)}{|\xi|^{2s} + i \nu(x)\dotprod\xi} \widehat u(\xi) \ d\xi,\quad\forall u \in \cS(\RR^n),
$$
and we set $T_c= T_{q_1} \circ T_{\tilde a}$. By \cite[Proposition 7.3.3]{Taylor_vol2}, we expect the following asymptotic expansion to hold for $c$,
\begin{equation}
\label{eq:Asymptotic_expansion_c}
c(x,\xi) \sim \sum_{\alpha \geq 0} \frac{i^{|\alpha|}}{\alpha!} D^{\alpha}_{\xi} q_1(x,\xi) D^{\alpha}_x \tilde a(x,\xi),\quad\forall x,\xi\in\RR^n.
\end{equation}
To obtain the expansion \eqref{eq:Asymptotic_expansion_c}, we first look at the symbolic properties of $\tilde a(x,\xi)$ and $q_1(x,\xi)$. Because we assume that $s<1/2$, direct calculations give us that $D_{\xi_j} \tilde a(x,\xi)$ behaves like a symbol in $S^0_{1,0}(\RR^n \times \RR^n)$, and $D^{\alpha}_{\xi} \tilde a(x,\xi)$ behaves like a symbol in $S^{2s-|\alpha|}_{1,0}(\RR^n \times \RR^n)$, for all multi-indices $\alpha\in \NN^n$ such that $|\alpha|\geq 2$. In the case of the symbol $p_1(x,\xi)$, we obtain
\begin{align*}
D^{\alpha}_{x}q_1(x,\xi) &\sim (1+|\xi|)^{-2s+|\alpha|(1-2s)},\quad\forall \alpha \in \NN^n,\\
D^{\alpha}_{\xi} q_1(x,\xi) &\sim (1+|\xi|)^{-2s-|\alpha| 2s}, \quad\forall \alpha \in \NN^n.
\end{align*}
The preceding asymptotic properties of the symbol $q_1(x,\xi)$ show that $q_1(x,\xi)$ belongs to the H\"ormander class of symbols $S^{-2s}_{2s,1-2s}(\RR^n \times \RR^n)$. 

Therefore, \cite[Proposition 7.3.3]{Taylor_vol2} applies to our case and we obtain that the asymptotic expansion for $c(x,\xi)$ given by \eqref{eq:Asymptotic_expansion_c} holds. We denote, for all $j \in \NN$,
$$
c_j(x,\xi):=\sum_{\{\alpha\in\NN^n:|\alpha|=j\}} \frac{i^j}{\alpha!} D^{\alpha}_{\xi}q_1(x,\xi) D^{\alpha}_x \tilde a(x,\xi), \quad\forall x,\xi\in\RR^n.
$$
By construction, we have that $c_0(x,\xi)=1+(\varphi(\xi)-1)$, and the function $\varphi-1$ belongs to the class $S^{-\infty}(\RR^n \times \RR^n)$. Because the symbol $D_{\xi_j} q_1(x,\xi)$ belongs to $S^{-4s}_{2s,1-2s}(\RR^n \times \RR^n)$ and $D_{x_j}\tilde a(x,\xi)$ belongs to $S^1_{1,0}(\RR^n \times \RR^n)$, we obtain by \cite[Proposition 7.3.3]{Taylor_vol2} that $c_1(x,\xi)$ belongs to $S^{1-4s}_{2s,1-2s}(\RR^n \times \RR^n)$. Also, $c_j(x,\xi)$ belong to $S^{1-4s}_{2s,1-2s}(\RR^n \times \RR^n)$ (even better), for all $j \geq 1$, and so we have
$$
c(x,\xi) = 1+ r(x,\xi), \quad\forall x,\xi\in\RR^n,
$$
where $r \in S^{1-4s}_{2s,1-2s}(\RR^n \times \RR^n)$. Notice that by requiring that $s>1/4$, the order of the symbol $r$ is negative and also $\delta=1-2s<\rho=2s$.

We obtain that $T_{q_1}\circ T_{\tilde a}=T_{1+r}$. Because the symbol $r$ has negative order, the pseudo-differential operator $T_{1+r}$ is elliptic with symbol in the class $S^0_{2s,1-2s}(\RR^n \times \RR^n)$. By \cite[Section 7.4]{Taylor_vol2}, we can find a two-sided parametrix, $T_{q_2}$, with $q_2\in S^0_{2s,1-2s}(\RR^n \times \RR^n)$ such that $T_{q_2}\circ T_{1+r}=I+S_R$, where $S_R$ belongs to $\hbox{OPS}^{-\infty}(\RR^n)$. By setting $T_q=T_{q_2}\circ T_{q_1}$, we see that the symbol $q$ belongs to the class $S^{-2s}_{2s,1-2s}(\RR^n \times \RR^n)$, and the pseudo-differential operator $T_q$ is a left-parametrix for $T_{\tilde a}$.

Similarly, we can show that $T_{\tilde a}$ admits a right-parametrix, $T_p$, with symbol $p$ which belongs to the class $S^{-2s}_{2s,1-2s}(\RR^n \times \RR^n)$. We have, for all $u \in \cS'(\RR^n)$,
$$
T_p u+S_R\circ T_p u = \left(T_q\circ T_{\tilde a}\right)\circ T_p u = T_q\circ \left(T_{\tilde a}\circ T_p \right) u = T_q u + T_q\circ S_L u,
$$
and because $S_R\circ T_p$ and $T_q\circ S_L$ are pseudo-differential operators in $\hbox{OPS}^{-\infty}(\RR^n)$, we see that $T_q u=T_p u$ modulo $\cC^{\infty}(\RR^n)$ functions, for all tempered distributions $u \in \cS'(\RR^n)$ in a standard way.
\end{proof}

The construction of the left-parametrix in Lemma \ref{lem:Parametrix} can be used to prove regularity of solutions to the equation defined by the fractional Laplacian with drift \eqref{eq:Equation}.

\begin{lem}[Regularity of solutions]
\label{eq:Regularity_solutions_restrictive_case}
Let $s\in (1/4,1/2)$ and $k,l \in \RR$. Assume that the vector field $\nu\in \cC^{\infty}_b(\RR^n;\RR^n)$. Let $u \in H^k(\RR^n)$ be such that $Au \in H^{l}(\RR^n)$. Then $u \in H^{l+2s}(\RR^n)$ and $\nu\dotprod\nabla u \in H^l(\RR^n)$.
\end{lem}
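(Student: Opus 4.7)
The proof will run the parametrix identity from Lemma \ref{lem:Parametrix} against $u$, exploit the regularizing nature of $E$, and use the Sobolev-mapping properties of symbols in the class $S^{-2s}_{2s,1-2s}$.

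First I would observe that since $u\in H^k(\RR^n)\subset\hL^2_{\loc}(\RR^n)\subset\cS'_{(0,s)}(\RR^n)$, the operator $A$ acts on $u$ and the decomposition \eqref{eq:Decomposition} applies, giving
\begin{equation*}
T_{\tilde a}u = Au - Eu.
\end{equation*}
By \eqref{eq:Definition_E_with_domain_Sobolev}, $Eu\in H^m(\RR^n)$ for every real $m$, so in particular $Eu\in H^l(\RR^n)$. Combined with the hypothesis $Au\in H^l(\RR^n)$, this yields $T_{\tilde a}u\in H^l(\RR^n)$.

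Next I would apply the left parametrix from Lemma \ref{lem:Parametrix}: there exists a smoothing operator $R\in OPS^{-\infty}(\RR^n)$ with
\begin{equation*}
u = T_q T_{\tilde a} u + R u.
\end{equation*}
Since $R$ is smoothing and $u\in H^k(\RR^n)$, we have $Ru\in H^{l+2s}(\RR^n)$ (in fact in every $H^m$). The main step is then the mapping property
\begin{equation*}
T_q : H^l(\RR^n) \longrightarrow H^{l+2s}(\RR^n),
\end{equation*}
which is the classical boundedness of operators with symbol in $S^{-2s}_{\rho,\delta}(\RR^n\times\RR^n)$ on Sobolev spaces, valid precisely when $0\le\delta<\rho\le 1$; with $\rho=2s$ and $\delta=1-2s$ this is the inequality $1-2s<2s$, i.e.\ $s>1/4$, which is exactly the hypothesis of Lemma \ref{lem:Parametrix}. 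This is the step where the restriction $s>1/4$ is essential, and it is the key technical point (though it is a standard result, e.g.\ from \cite[\S 7.11]{Taylor_vol2}). Adding the two pieces gives $u\in H^{l+2s}(\RR^n)$.

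Finally, to obtain $\nu\dotprod\nabla u\in H^l(\RR^n)$, I would write
\begin{equation*}
\nu\dotprod\nabla u = Au - (-\Delta)^s u.
\end{equation*}
The Fourier multiplier $(-\Delta)^s$ maps $H^{l+2s}(\RR^n)$ into $H^l(\RR^n)$ since $|\xi|^{2s}\le\langle\xi\rangle^{2s}$, so $(-\Delta)^s u\in H^l(\RR^n)$; combined with the assumption $Au\in H^l(\RR^n)$, this gives the second conclusion. The only real obstacle in the argument is securing the boundedness of $T_q$ on the appropriate Sobolev scale; everything else is bookkeeping using Lemma \ref{lem:Parametrix} and \eqref{eq:Definition_E_with_domain_Sobolev}.
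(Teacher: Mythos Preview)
Your proof is correct and follows essentially the same approach as the paper: decompose $T_{\tilde a}u=Au-Eu$, use the smoothing property of $E$ to place $T_{\tilde a}u$ in $H^l$, apply the left parametrix from Lemma~\ref{lem:Parametrix} together with the Sobolev boundedness of $T_q\in OPS^{-2s}_{2s,1-2s}$ (valid since $1-2s<2s$), and deduce $\nu\cdot\nabla u\in H^l$ from $u\in H^{l+2s}$ and $Au\in H^l$. The only cosmetic difference is that the paper cites \cite[Proposition~7.5.5]{Taylor_vol2} for the mapping property and phrases the final step slightly less explicitly.
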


\begin{proof}
Let $T_q$ be a left-parametrix of $T_{\tilde a}$ constructed as in the proof of Lemma \ref{lem:Parametrix}. Because we assume that $s>1/4$, we have that $2s>1-2s$, and using the fact that the symbol $q$ belongs to the class $S^{-2s}_{2s,1-2s}(\RR^n \times \RR^n)$, we obtain by \cite[Proposition 7.5.5]{Taylor_vol2} that the pseudo-differential operator
\begin{equation}
\label{eq:T_p_continuity}
T_q:H^l(\RR^n)\rightarrow H^{l+2s}(\RR^n)
\end{equation}
is bounded. 

Let $u \in H^k(\RR^n)$ be such that $Au\in H^l(\RR^n)$. We know
that $T_{\tilde a} u = A u -E u$ by \eqref{eq:Decomposition}.  By
\eqref{eq:Definition_E_with_domain_Sobolev} and the fact that $u \in H^k(\RR^n)\subset \hL^{2}_{\loc}(\RR^n)$, we have that $Eu \in H^m(\RR^n)$, for all
$m \in \RR$, and so $T_{\tilde a}u \in H^{l}(\RR^n)$. We know that
$T_q\circ T_{\tilde a} = I+R$, where the pseudo-differential operator $R\in OPS^{-\infty}(\RR^n)$, and so the map
$$
R:H^k(\RR^n)\rightarrow H^m(\RR^n),\quad\forall m \in \RR,
$$
is bounded. This fact together with the boundedness of the map in \eqref{eq:T_p_continuity} and the assumption $u\in H^k(\RR^n)$, gives us that $u \in
H^{l+2s}(\RR^n)$.  Since we have shown that $u\in H^{l+2s}(\RR^n)$ we
immediately conclude from the fact that $Au \in H^{l}(\RR^n)$ that 
$\nu\dotprod \nabla u\in H^l(\RR^n),$ as well. In fact, using symbolic
computations, one can show that $\nu\dotprod \nabla T_p:H^l(\RR^n)\rightarrow
H^{l}(\RR^n)$ is a bounded operator. This concludes the proof.
\end{proof}

\begin{rmk}[Hypotheses of Lemma \ref{eq:Regularity_solutions_restrictive_case}]
\label{rmk:Hyp_Reg_sol_restrictive_case}
The hypothesis that $u\in H^k(\RR^n)$, for some real constant $k$, in the statement of Lemma \ref{eq:Regularity_solutions_restrictive_case}, can be replaced with the assumption that $u\in \hL^2_{\loc}(\RR^n)$. In this case, we obtain that there is a constant $q\in\RR$ such that
\begin{equation}
\label{eq:u_in_weighted_Sobolev_space}
u \in H^{l+2s}(\RR^n)+\left\langle x\right\rangle^q H^{m}(\RR^n), \quad\forall m\in\RR^n,
\end{equation}
where $\left\langle x\right\rangle^q H^{m}(\RR^n)$ is the weighted Sobolev space consisting of tempered distributions of the form $\left\langle x\right\rangle^q u$ with $u\in H^m(\RR^n)$ \cite[Definition (2.150)]{Melrose_1998}. We can argue that \eqref{eq:u_in_weighted_Sobolev_space} holds in the following way. From \cite[Lemma 2.10]{Melrose_1998}, we have that there are real constants, $k$ and $q$, such that $u\in\left\langle x\right\rangle^q H^k(\RR^n)$, and from \cite[Theorem 2.4]{Melrose_1998} and the fact that $R\in\hbox{OPS}^{-\infty}(\RR^n)$, it follows that
$$
R:\left\langle x\right\rangle^q H^k(\RR^n)\rightarrow \left\langle x\right\rangle^q H^{k+m}(\RR^n),\quad\forall m\in\RR.
$$
Thus the identity $T_q\circ T_{\tilde a} = I+R$ now gives us \eqref{eq:u_in_weighted_Sobolev_space}.
\end{rmk}

\section{A second construction of a two-sided parametrix}
\label{sec:Second_construction_parametrix}
In this section, we extend the results of \S
\ref{sec:First_construction_parametrix} from the case when $s\in (1/4,1/2)$ to
that when $s\in (0,1/2)$. We give a method to build a two-sided parametrix for
the pseudo-differential operator $T_{\tilde a}$ which can be applied for all
parameters $s$ in the range $(0,1/2)$. From the preceding section, we see that
the reason we had to restrict to the case $s\in (1/4,1/2)$ is that the
candidate for a first approximation of the parametrix of $T_{\tilde a}$ is a
pseudo-differential operator with symbol, $q_1(x,\xi)$, that belongs to the
H\"ormander class $S^{-2s}_{2s,1-2s}(\RR^n \times \RR^n)$, and we need to have
$2s>1-2s$, that is $s>1/4$, in order to obtain a useful symbolic composition
formula.  We notice that if the vector field $\nu(x)$ is a constant, then the
symbol $q_1(x,\xi)$ belongs to the class $S^{-2s}_{2s,0}(\RR^n \times \RR^n)$,
and so we do not need to assume any restriction on the values of the parameter
$s$, in order to obtain a two-sided parametrix for the operator $T_{\tilde
  a}$. Our strategy for a general smooth vector field $\nu(x)$, is to apply a
change of coordinates to obtain an operator with a constant drift for which we
can construct a two-sided parametrix using the methods of \S
\ref{sec:First_construction_parametrix}.

\subsection{Change of coordinates}
\label{subsec:Change_of_coordinates}
We first describe the effect of the change of coordinates on the symbols of
pseudo-differential operators. While this is a classical subject, we were not
able to find the statement of Lemma \ref{lem:Change_of_coordinates} below in
the form required in this article. However, analogous statements which hold
for symbols with kernels with compact support can be found in H\"ormander
\cite[Theorem 18.1.17]{Hormander_vol3} (see also the comment on
\cite[p. 352]{Hormander_vol3} for symbols in the class
$S^m_{\rho,\delta}(\RR^n\times\RR^n)$), and for symbols in the class
$S^m_{1,0}(\RR^n\times\RR^n)$, can be found in \cite{Abels}.

For functions $\Phi:\RR^n\rightarrow\RR^n$ and $u:\RR^n\rightarrow\RR$, we define
$$
(\Phi^*u)(x):= u(\Phi(x)),\quad\forall x\in \RR^n.
$$

\begin{lem}[Change of coordinates]
\label{lem:Change_of_coordinates}
Let $a$ be a symbol in the H\"ormander class $S^m_{\rho,\delta}(\RR^n\times\RR^n)$, where $0\leq\delta\leq 1$ and $0\leq 1-\rho<\rho\leq 1$. Let $\Phi:\RR^n\rightarrow\RR^n$ be such that
\begin{enumerate}
\item \label{item:Diffeomorphism_invertibility_smoothness} $\Phi$ is a bijection, and the functions $\Phi$ and $\Phi^{-1}$ belong to $\cC^{\infty}_b(\RR^n;\RR^n)$.
\item \label{item:Auxiliary_function_invertibility_smoothness} The inverse of
  the matrix-valued function
  $H:\RR^n\times\RR^n\rightarrow\RR^{n^2},$
\begin{equation}
\label{eq:Auxiliary_function_invertibility_smoothness}
H(x,y) = \int_{0}^1 J\Phi(x+t(y-x)) \ dt,\quad\forall x,y \in\RR^n,
\end{equation}
is defined at all points $(x,y)\in\RR^n\times\RR^n$, and the function $(x,y)\to
H^{-1}(x,y)$ belongs to $\cC^{\infty}_b(\RR^n\times\RR^n;\RR^{n^2})$.
\end{enumerate}
Then there is a symbol, $b \in S^m_{\rho,\delta'}(\RR^n\times\RR^n)$ with $\delta'=\delta\vee(1-\rho)$, such that
\begin{equation}
\label{eq:Operator_change_of_coordinates}
\Phi^* T_a (\Phi^{-1})^* = T_b,
\end{equation}
Moreover, the symbol $b$ satisfies, for all non-negative integers $N$,
\begin{equation}
\label{eq:Operator_change_of_coordinates_asymptotic_expansion}
b(y,\eta) - \sum_{|\alpha|< N} \frac{1}{\alpha!} D^{\alpha}_{\eta} D^{\alpha}_w c(y,w,\eta)|_{w=y}
\in S^{m-N(2\rho-1)}_{\rho,\delta'}(\RR^n\times\RR^n),
\end{equation}
where
\begin{equation}
\label{eq:Compound_symbol_change_of_coordinates}
c(y,w,\eta)= a(\Phi(y),H(y,w)^{-T} \eta) |J\Phi(w)| |H(y,w)^{-T}|,\quad\forall y,w,\eta\in\RR^n.
\end{equation}
\end{lem}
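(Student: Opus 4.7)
My plan is to change variables directly in the oscillatory-integral representation of $T_a$ to exhibit $\Phi^* T_a (\Phi^{-1})^*$ as a pseudo-differential operator with compound amplitude, and then reduce that compound amplitude to an ordinary symbol via the standard Kuranishi-type procedure. Starting from
\begin{equation*}
T_a u(x) = (2\pi)^{-n} \iint e^{i(x-z) \cdot \xi} a(x,\xi)\, u(z)\, dz\, d\xi, \quad u \in \cS(\RR^n),
\end{equation*}
I set $u = (\Phi^{-1})^* v$, so that $(\Phi^* T_a (\Phi^{-1})^* v)(y) = (T_a u)(\Phi(y))$, and substitute $z = \Phi(w)$, introducing the Jacobian $|J\Phi(w)|$ and producing the phase $(\Phi(y)-\Phi(w)) \cdot \xi$, which by the fundamental theorem of calculus and the definition \eqref{eq:Auxiliary_function_invertibility_smoothness} of $H$ equals $(y-w) \cdot H(y,w)^T \xi$. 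Hypothesis \eqref{item:Auxiliary_function_invertibility_smoothness} permits the $(y,w)$-dependent linear substitution $\eta = H(y,w)^T \xi$, with inverse Jacobian $|H(y,w)^{-T}|$, yielding
\begin{equation*}
(\Phi^* T_a (\Phi^{-1})^* v)(y) = (2\pi)^{-n} \iint e^{i(y-w) \cdot \eta}\, c(y,w,\eta)\, v(w)\, dw\, d\eta,
\end{equation*}
with $c$ as in \eqref{eq:Compound_symbol_change_of_coordinates}; these manipulations are valid for Schwartz $v$ interpreted as oscillatory integrals.

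Next, I verify that $c$ is a compound amplitude whose $y,w,\eta$ derivatives satisfy bounds consistent with the class claimed for $b$: order $m$ in $\eta$, with a $\rho$-gain per $D_\eta$-derivative and at most a $\delta' = \delta \vee (1-\rho)$ loss per $D_y$ or $D_w$ derivative. Since $H(y,w)^{-T}$, $|J\Phi(w)|$, and $|H(y,w)^{-T}|$ all lie in $\cC^\infty_b$ by hypotheses \eqref{item:Diffeomorphism_invertibility_smoothness}--\eqref{item:Auxiliary_function_invertibility_smoothness}, and $|H(y,w)^{-T}\eta| \asymp |\eta|$, a $D_\eta$-derivative of $a(\Phi(y), H(y,w)^{-T}\eta)$ transfers through $H^{-T}$ to a $D_\xi$-derivative of $a$, contributing the $\rho$-gain. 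A $D_y$ or $D_w$ derivative produces either a $D_x$-derivative of $a$ (loss $\delta$) or a $D_\xi$-derivative of $a$ multiplied by a factor linear in $\eta$ (loss $1-\rho$); taking the worse of the two and iterating yields
\begin{equation*}
|D_y^\gamma D_w^{\gamma'} D_\eta^\beta c(y,w,\eta)| \leq C_{\beta,\gamma,\gamma'} (1+|\eta|)^{m - \rho|\beta| + \delta'(|\gamma|+|\gamma'|)}.
\end{equation*}

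The final step is the reduction of the compound amplitude $c$ to an ordinary symbol $b$. I Taylor-expand $c(y,w,\eta)$ in $w$ about $w = y$ up to order $N$, substitute into the oscillatory integral, and use the identity $(w-y)^\alpha e^{i(y-w) \cdot \eta} = i^{|\alpha|} D_\eta^\alpha e^{i(y-w) \cdot \eta}$ to integrate by parts $|\alpha|$ times in $\eta$. The $\alpha$-th Taylor term then contributes the pseudo-differential operator with ordinary symbol $\tfrac{1}{\alpha!} D_\eta^\alpha D_w^\alpha c(y,w,\eta)|_{w=y}$, which from the bounds above has order $m-|\alpha|(\rho-\delta')$ in the class $S^{\cdot}_{\rho,\delta'}(\RR^n \times \RR^n)$, while the Taylor remainder, treated the same way, yields an operator whose amplitude is of order $m - N(2\rho - 1)$, giving \eqref{eq:Operator_change_of_coordinates_asymptotic_expansion}. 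The main technical obstacle is rigorously justifying the integration-by-parts and the $\eta$-substitution when the amplitude has positive order: this is handled by introducing a Schwartz cut-off $\chi(\eta/R)$, performing enough integrations by parts to bring the amplitude below order $-n$ (using the hypothesis $1-\rho<\rho$, which guarantees $2\rho-1>0$ and hence strict gain at each step), and passing to the limit $R \to \infty$, as in \cite[Theorem 18.1.17]{Hormander_vol3}.
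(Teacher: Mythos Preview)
Your proof is correct and follows essentially the same route as the paper: both derive the compound amplitude $c(y,w,\eta)$ by the change of variables $z=\Phi(w)$ followed by the Kuranishi substitution $\eta=H(y,w)^T\xi$, and both then reduce the compound amplitude to an ordinary symbol. The only difference is that the paper invokes \cite[Proposition 7.3.1]{Taylor_vol2} for this last reduction, whereas you spell out the Taylor-expansion-in-$w$ and integration-by-parts argument that underlies that proposition; in particular your more detailed verification of the amplitude estimates on $c$ makes explicit why $c\in S^m_{\rho,\delta,1-\rho}(\RR^{2n}\times\RR^n)$, which the paper merely asserts. One small remark: since $D_w$-derivatives of $c$ incur only a $(1-\rho)$-loss (the $\Phi(y)$-argument is independent of $w$), each term $D_\eta^\alpha D_w^\alpha c|_{w=y}$ actually has order $m-|\alpha|(2\rho-1)$ rather than the slightly weaker $m-|\alpha|(\rho-\delta')$ you wrote, which is what makes the remainder bound $m-N(2\rho-1)$ consistent.
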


\begin{rmk}[Invariance of classes of pseudo-differential operators under change of coordinates]
\label{rmk:Invariance_symbol_class}
Lemma \ref{lem:Change_of_coordinates} shows that given a symbol, $a\in S^m_{\rho,\delta}(\RR^n\times\RR^n)$, with $1-\rho<\rho$, there is a symbol $b \in S^m_{\rho,\delta'}(\RR^n\times\RR^n)$, with $\delta'=\delta\vee (1-\rho)$, such that the change of coordinates \eqref{eq:Operator_change_of_coordinates} holds. To guarantee invariance of symbol classes under change of coordinates, we need to require that 
\begin{equation}
\label{eq:Condition_invariance_symbols}
0\leq 1-\rho\leq \delta<\rho\leq 1,
\end{equation}
When the preceding condition is satisfied, we see that $\delta'=\delta$, and both symbols $a(x,\xi)$ and $b(x,\xi)$ belong to the same class. Condition \eqref{eq:Condition_invariance_symbols} is consistent with the hypothesis of \cite[Theorem 18.1.17]{Hormander_vol3} for symbols in the H\"ormander class $S^m_{1,0}(\RR^n\times\RR^n)$ with kernels with compact support (see also the comment on \cite[p. 352]{Hormander_vol3}). 
\end{rmk}

\begin{proof}[Proof of Lemma \ref{lem:Change_of_coordinates}]
Let $u \in \cS(\RR^n)$ and define $v=\Phi^* u$ and $x=\Phi(y)$. Then $u=(\Phi^{-1})^* v$, and the left-hand side of identity \eqref{eq:Operator_change_of_coordinates} becomes
\begin{align*}
\Phi^* T_a (\Phi^{-1})^* v(y) &= T_a (v\circ\Phi^{-1}) (\Phi(y)) (= T_a u(x))\\
&= (2\pi)^{-n} \int e^{i\Phi(y)\dotprod\xi} a(\Phi(y),\xi) \widehat{v\circ\Phi^{-1}}(\xi)\ d\xi\\
&= (2\pi)^{-n}\int e^{i(\Phi(y)-z)\xi} a(\Phi(y),\xi) v(\Phi^{-1}(z))\ dz d\xi \\
&= (2\pi)^{-n}\int e^{i(\Phi(y)-\Phi(w))\xi} a(\Phi(y),\xi) |J\Phi(w)| v(w)\ dw d\xi,
\end{align*}
where we applied the change of variables $z=\Phi(w)$ in the penultimate equality. Using identity \eqref{eq:Auxiliary_function_invertibility_smoothness} we have that
$$
\Phi(y)-\Phi(w)=H(y,w)(y-w),\quad\forall y, w\in\RR^n,
$$
and applying the change of variable $\eta=H(y,z)^T\xi$, we obtain
\begin{align*}
\Phi^* T_a (\Phi^{-1})^* v(y) 
&= (2\pi)^{-n}\int e^{i(y-w)\eta} a(\Phi(y),H(y,z)^{-T} \eta) |J\Phi(w)| |H(y,w)^{-T}| v(w)\ dw d\xi.
\end{align*}
Thus, the pseudo-differential operator $\Phi^* T_a (\Phi^{-1})^*$ is defined as
an operator with a compound symbol $c(y,w,\eta)$ given by
\eqref{eq:Compound_symbol_change_of_coordinates}. The compound symbol
$c(y,w,\eta)$ belongs to the class
$S^m_{\rho,\delta,1-\rho}(\RR^{2n}\times\RR^n)$ (see definition \cite[\S 7.3,
Inequality (3.3)]{Taylor_vol2}). This follows from the fact that the symbol $a
\in S^m_{\rho,\delta}(\RR^n\times\RR^n)$, the functions $\Phi$ and $\Phi^{-1}$
belong to $\cC^{\infty}_b(\RR^n;\RR^n)$ by assumption
\eqref{item:Diffeomorphism_invertibility_smoothness}, and the map $H(y,w)$
satisfies assumption \eqref{item:Auxiliary_function_invertibility_smoothness}
in the statement of the lemma. Because we also assume that $1-\rho<\rho$, it follows from \cite[Proposition 7.3.1]{Taylor_vol2} that there is a symbol $b \in S^m_{\rho,\delta\vee(1-\rho)}(\RR^n\times\RR^n)$ such that identity \eqref{eq:Operator_change_of_coordinates} holds, and the symbol $b$ satisfies the asymptotic expansion \eqref{eq:Operator_change_of_coordinates_asymptotic_expansion}.
\end{proof}

We apply Lemma \ref{lem:Change_of_coordinates} to the pseudo-differential operator $T_{\tilde a}$, defined by the symbol $\tilde a$ in \eqref{eq:Cut_symbol}.

\begin{lem}[Change of coordinates for $T_{\tilde a}$]
\label{lem:Change_of_coordinates_cut_symbol}
Let $s\in (0,1/2)$, and let $\Phi:\RR^n\rightarrow\RR^n$ be a diffeomorphism
which satisfies assumptions
\eqref{item:Diffeomorphism_invertibility_smoothness} and
\eqref{item:Auxiliary_function_invertibility_smoothness} of Lemma
\ref{lem:Change_of_coordinates}. Let $\nu\in \cC^{\infty}_b(\RR^n;\RR^n)$ be a
smooth vector field such that, for all functions $u\in \cC^1(\RR^n)$, we have
that
\begin{equation}
\label{eq:Derivatives_change_of_coordinates}
\nu(x)\dotprod \nabla u(x) = v_{y_n}(y),
\end{equation}
where we let $v:=\Phi^*u$ and $x=\Phi(y)$. Then there is a symbol, $b \in
S^{1}_{1,0}(\RR^n\times\RR^n)$, such that
\begin{equation}
\label{eq:Asymptotic_expansion_b}
b(y,\eta) - \left(|(J\Phi(y))^{-T}\eta|^{2s} \varphi(2(J\Phi(y))^{-T}\eta)+i\eta_n\right) \in S^{2s-1}_{1,0}(\RR^n\times\RR^n), 
\end{equation}
and 
\begin{equation}
\label{eq:Change_of_coordinates_b}
\Phi^* T_{\tilde a} (\Phi^{-1})^* = T_b.
\end{equation}
\end{lem}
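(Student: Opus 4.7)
The plan is to split the symbol $\tilde a$ into its two natural pieces and treat each piece separately, handling the drift by direct use of the hypothesis and handling the fractional-Laplacian piece by Lemma~\ref{lem:Change_of_coordinates}. Concretely, write
\begin{equation*}
\tilde a(x,\xi) = a_1(x,\xi) + a_2(x,\xi), \qquad a_1(x,\xi) := |\xi|^{2s}\varphi(2\xi), \quad a_2(x,\xi) := i\nu(x)\dotprod\xi.
\end{equation*}
Then $a_1 \in S^{2s}_{1,0}(\RR^n\times\RR^n)$ (because $\varphi$ cuts off a neighborhood of $\xi=0$ where $|\xi|^{2s}$ is not smooth), while $a_2 \in S^1_{1,0}(\RR^n\times\RR^n)$ since $\nu\in\cC^\infty_b$. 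By linearity, $T_{\tilde a}=T_{a_1}+T_{a_2}$, so it suffices to compute $\Phi^* T_{a_i}(\Phi^{-1})^*$ for $i=1,2$ separately.

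For the drift piece, I would invoke the hypothesis directly: since $T_{a_2}u = \nu\dotprod\nabla u$, assumption \eqref{eq:Derivatives_change_of_coordinates} gives
\begin{equation*}
\Phi^* T_{a_2} (\Phi^{-1})^* v(y) = v_{y_n}(y) = T_{i\eta_n} v(y), \qquad \forall v \in \cS(\RR^n),
\end{equation*}
so this piece contributes the exact symbol $i\eta_n \in S^1_{1,0}$. Geometrically, \eqref{eq:Derivatives_change_of_coordinates} is exactly the condition that the diffeomorphism $\Phi$ straightens the vector field $\nu$, forcing $\nu(\Phi(y))$ to equal the $n$-th column of $J\Phi(y)$; this is the conceptual reason the final symbol involves the especially simple summand $i\eta_n$.

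For the fractional-Laplacian piece, I would apply Lemma~\ref{lem:Change_of_coordinates} with $\rho=1$, $\delta=0$ (so $\delta'=\delta\vee(1-\rho)=0$ and $1-\rho<\rho$ is satisfied) to $a_1\in S^{2s}_{1,0}$. This yields $b_1\in S^{2s}_{1,0}$ with $\Phi^* T_{a_1}(\Phi^{-1})^* = T_{b_1}$ and, taking $N=1$ in \eqref{eq:Operator_change_of_coordinates_asymptotic_expansion} (so that $m-N(2\rho-1)=2s-1$),
\begin{equation*}
b_1(y,\eta) - c_1(y,y,\eta) \in S^{2s-1}_{1,0}(\RR^n\times\RR^n),
\end{equation*}
where $c_1(y,w,\eta)=a_1(\Phi(y),H(y,w)^{-T}\eta)|J\Phi(w)||H(y,w)^{-T}|$. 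Evaluating at $w=y$ and using $H(y,y)=J\Phi(y)$ together with $|J\Phi(y)||J\Phi(y)^{-T}|=1$ collapses the leading term to $|J\Phi(y)^{-T}\eta|^{2s}\varphi(2J\Phi(y)^{-T}\eta)$.

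Finally I would set $b := b_1 + i\eta_n$. This gives \eqref{eq:Change_of_coordinates_b} and, since the $i\eta_n$ part is captured exactly, the remainder $b(y,\eta)-\bigl(|J\Phi(y)^{-T}\eta|^{2s}\varphi(2J\Phi(y)^{-T}\eta)+i\eta_n\bigr)$ coincides with $b_1-c_1(y,y,\cdot)\in S^{2s-1}_{1,0}$, which is \eqref{eq:Asymptotic_expansion_b}. Because $2s<1$, we have $b_1\in S^{2s}_{1,0}\subset S^1_{1,0}$ and $i\eta_n\in S^1_{1,0}$, so $b\in S^1_{1,0}$ as claimed. I do not expect any serious obstacle: the whole argument is a direct application of Lemma~\ref{lem:Change_of_coordinates} once the drift has been pulled aside, and the small point worth highlighting is the identity $|J\Phi(y)||J\Phi(y)^{-T}|=1$ which produces the unweighted leading symbol.
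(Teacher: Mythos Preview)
Your proof is correct and follows essentially the same approach as the paper: split $\tilde a$ into the fractional-Laplacian part and the drift part, handle the drift exactly via the straightening hypothesis~\eqref{eq:Derivatives_change_of_coordinates}, and apply Lemma~\ref{lem:Change_of_coordinates} with $N=1$ to the $S^{2s}_{1,0}$ piece, simplifying the leading compound symbol using $H(y,y)=J\Phi(y)$ and $|J\Phi(y)||J\Phi(y)^{-T}|=1$. The paper's argument is identical in structure and detail.
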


\begin{proof} We write 
\begin{equation}
\label{eq:Split_tilde_a}
\tilde a(x,\xi)= \tilde a^1(\xi)+\tilde a^2(x,\xi),
\end{equation}
where we let
\begin{equation}
\label{eq:Tilde_a_1_2}
\tilde a^1(\xi)=|\xi|^{2s}\varphi(2\xi)\quad\hbox{and}\quad \tilde a^2(x,\xi)=i\nu(x)\xi.
\end{equation}
Clearly the symbol $\tilde a^1$ belongs to the class $S^{2s}_{1,0}(\RR^n\times\RR^n)$ and $\tilde a^2$ belongs to $S^1_{1,0}(\RR^n\times\RR^n)$, and we have that
$$
\Phi^* T_{\tilde a} (\Phi^{-1})^* = \Phi^* T_{\tilde a^1} (\Phi^{-1})^* + \Phi^* T_{\tilde a^2} (\Phi^{-1})^*.
$$
Identity \eqref{eq:Derivatives_change_of_coordinates} immediately gives us that $\Phi^* T_{\tilde a^2} (\Phi^{-1})^*=T_{b^2}$ with $b^2(y,\eta)=i\eta_n$. Applying \eqref{eq:Operator_change_of_coordinates_asymptotic_expansion} with $N=1$ to the symbol $\tilde a^1$, we obtain that
$$
\Phi^* T_{\tilde a^1} (\Phi^{-1})^*=T_{b^1},
$$
and the symbol $b^1$ satisfies
$$
b^1(y,\eta) - |(J\Phi(y))^{-T}\eta|^{2s} \varphi(2(J\Phi(y))^{-T}\eta) \in S^{2s-1}_{1,0}(\RR^n\times\RR^n).
$$
To obtain the preceding expression, we used the fact that $H(y,y)^{-1}=(J\Phi(y))^{-1}$, and that $|J\Phi(y)||H(y,y)^{-T}|=1$.
Letting now 
$$
b(y,\eta)=b^1(y,\eta)+b^2(y,\eta),
$$
we see that the symbol $b$ satisfies \eqref{eq:Asymptotic_expansion_b} and identity \eqref{eq:Change_of_coordinates_b}.
\end{proof}

\subsection{Construction of a two-sided parametrix and regularity of solutions}
\label{subsec:Parametrix_regularity}
We now build a two-sided parametrix for the pseudo-differential operator $T_b$
(Lemma \ref{lem:Parametrix_b}), which we then use to prove local regularity of solutions in Sobolev spaces to equation \eqref{eq:Equation} (Lemma \ref{lem:Regularity_solutions_general_case}). The hypotheses of Lemmas \ref{lem:Parametrix_b} and \ref{lem:Regularity_solutions_general_case} are relaxed in \S \ref{subsubsec:Diffeomorphism},  where in Lemma \ref{lem:Construction_diffeomorphism_local} we give sufficient conditions for the existence of a diffeomorphism, $\Phi$, with suitable local properties. Then, in \S \ref{subsubsec:Localization} we use Lemma \ref{lem:Construction_diffeomorphism_local} to prove the main result of our article, Theorem \ref{thm:Local_regularity}.

\begin{lem}[A two-sided parametrix for $T_b$]
\label{lem:Parametrix_b}
Let $s\in (0,1/2)$, and let $\Phi:\RR^n\rightarrow\RR^n$ be a diffeomorphism
which satisfies assumptions
\eqref{item:Diffeomorphism_invertibility_smoothness} and
\eqref{item:Auxiliary_function_invertibility_smoothness} of Lemma
\ref{lem:Change_of_coordinates}. Let $\nu\in \cC^{\infty}_b(\RR^n;\RR^n)$ be a
smooth vector field such that identity
\eqref{eq:Derivatives_change_of_coordinates} holds for all functions $u\in
\cC^1(\RR^n)$, where we let $v:=\Phi^*u$. In addition we assume that there is a
positive constant, $C$, such that
\begin{equation}
\label{eq:Diffeomorphism_nondegeneracy}
C^{-1}|\eta| \leq |(J\Phi(y))^{-T}\eta| \leq C|\eta|,\quad\forall y, \eta\in\RR^n.
\end{equation}
Then there is a symbol, $p \in S^{-2s}_{2s,0}(\RR^n\times\RR^n)$, such that
\begin{equation}
\label{eq:Parametrix_b}
T_p T_b = T_b T_p = I \hbox{ mod } OPS^{-\infty}(\RR^n).
\end{equation}
\end{lem}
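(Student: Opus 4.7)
The strategy is to follow the blueprint of Lemma \ref{lem:Parametrix} while exploiting the crucial new feature of $b$: after the change of coordinates, the drift term $i\eta_n$ appearing in the principal part of $b$ (see \eqref{eq:Asymptotic_expansion_b}) is \emph{independent of} $y$. This improvement is precisely what enables the construction to succeed for the full range $s \in (0, 1/2)$, and not only for $s \in (1/4, 1/2)$.

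First I would introduce the first approximation
\begin{equation*}
p_1(y, \eta) := \frac{\varphi(\eta)}{b(y, \eta)},
\end{equation*}
with $\varphi$ the cutoff from \eqref{eq:Cutoff_varphi}, and verify that $p_1 \in S^{-2s}_{2s, 0}(\RR^n \times \RR^n)$. Writing $b = b_1(y, \eta) + i\eta_n$ with $b_1$ real-valued, the nondegeneracy hypothesis \eqref{eq:Diffeomorphism_nondegeneracy} together with \eqref{eq:Asymptotic_expansion_b} yields $b_1 \in S^{2s}_{1, 0}$ and $b_1(y, \eta) \geq c|\eta|^{2s}$ for $|\eta|$ large, whence $|b(y, \eta)| \geq c|\eta|^{2s}$. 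The key observation is that every $y$-derivative of $b$ agrees with the corresponding $y$-derivative of $b_1$, and so has order only $2s$ in $\eta$ rather than $1$. Combined with the fact that $D_{\eta_n} b$ is of order $0$ (dominated for large $|\eta|$ by the constant $i$) while all other $\eta$-derivatives of $b$ have order at most $2s - 1$, a direct quotient-rule estimate gives $|D_y^\beta D_\eta^\alpha p_1(y, \eta)| \lesssim \langle \eta \rangle^{-2s(1 + |\alpha|)}$, confirming that $p_1 \in S^{-2s}_{2s, 0}$.

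With $p_1$ in hand, I would compose $T_{p_1} T_b$ using \cite[Proposition 7.3.3]{Taylor_vol2}, which applies because $\rho = 2s > 0 = \delta$. The composite symbol admits the asymptotic expansion $\sum_{\alpha} \tfrac{i^{|\alpha|}}{\alpha!} D_\eta^\alpha p_1 \, D_y^\alpha b$. The $\alpha = 0$ term equals $p_1 b = \varphi(\eta)$, which differs from $1$ by a compactly supported function and hence by an element of $S^{-\infty}$. For $|\alpha| \geq 1$, exploiting once more that $D_y^\alpha b = D_y^\alpha b_1 \in S^{2s}_{1, 0}$, each term $D_\eta^\alpha p_1 \, D_y^\alpha b$ lies in $S^{-2s(1 + |\alpha|) + 2s}_{2s, 0} = S^{-2s|\alpha|}_{2s, 0}$, so the tail sums to a symbol in $S^{-2s}_{2s, 0}$. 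Thus $T_{p_1} T_b = T_{1 + r}$ modulo $OPS^{-\infty}(\RR^n)$ with $r \in S^{-2s}_{2s, 0}$ of negative order. Since $1 + r$ is elliptic in $S^0_{2s, 0}$, the standard elliptic parametrix construction of \cite[\S 7.4]{Taylor_vol2} produces $q_2 \in S^0_{2s, 0}$ satisfying $T_{q_2} T_{1 + r} = I$ modulo $OPS^{-\infty}$. Setting $p := q_2 \# p_1 \in S^{-2s}_{2s, 0}$ gives a left parametrix for $T_b$. A right parametrix $p' \in S^{-2s}_{2s, 0}$ is constructed by the analogous procedure starting from $T_b T_{p_1}$, and the usual algebraic manipulation then shows that $p \equiv p'$ modulo $OPS^{-\infty}$, establishing both identities in \eqref{eq:Parametrix_b}.

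The main obstacle, and the essential gain over Lemma \ref{lem:Parametrix}, is the symbol-class bookkeeping in the first step: establishing $p_1 \in S^{-2s}_{2s, 0}$ rather than the weaker $S^{-2s}_{2s, 1 - 2s}$ obtained previously. This hinges entirely on the observation that the post-change-of-coordinates drift depends only on $\eta$, so $y$-differentiation touches only the real part $b_1$, which is of lower order in $\eta$. Without this gain in the $\delta$ parameter, the composition formula would fail to produce a negative-order remainder when $s \leq 1/4$, exactly as in the earlier lemma.
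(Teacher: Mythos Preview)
Your proposal is correct and follows essentially the same approach as the paper: define $p_1$ as a cutoff divided by the symbol, verify that the constancy of the drift $i\eta_n$ in $y$ places $p_1$ in $S^{-2s}_{2s,0}$ (rather than $S^{-2s}_{2s,1-2s}$), and then repeat the composition-and-elliptic-inverse argument of Lemma~\ref{lem:Parametrix} without the restriction $s>1/4$. The only cosmetic difference is that the paper divides by the explicit leading part $b_0(y,\eta)=|(J\Phi(y))^{-T}\eta|^{2s}\varphi(2(J\Phi(y))^{-T}\eta)+i\eta_n$ rather than by the full symbol $b$; since $b-b_0\in S^{2s-1}_{1,0}$ this changes nothing of substance, though it does make the reality of the non-drift part and the lower bound $|b_0|\geq c|\eta|^{2s}$ transparent without appealing to~\eqref{eq:Asymptotic_expansion_b}.
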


\begin{proof}
From \eqref{eq:Asymptotic_expansion_b}, it follows that the leading part of $b$ is given by 
\begin{equation*}
b_0(y,\eta) =  |(J\Phi(y))^{-T}\eta|^{2s} \varphi(2(J\Phi(y))^{-T})\eta)+i\eta_n,\quad\forall y,\eta\in\RR^n,
\end{equation*}
and the symbol $b_0 \in S^{1}_{1,0}(\RR^n\times\RR^n)$. We use the symbol
$b(y,\eta)$ to build a left- and right-parametrix for the
pseudo-differential operator $T_b$ exactly as in the proof of Lemma
\ref{lem:Parametrix}. We first define
$$
p_1(y,\eta):=\frac{\psi(\eta)}{b_0(y,\eta)},
$$
where $\supp\psi\subset\{\eta:\:\varphi(2(J\Phi(y))^{-T})\eta)=1\}.$ 
Reviewing the proof of Lemma \ref{lem:Parametrix}, we see that $p_1$
defines a symbol in the H\"ormander class $S^{-2s}_{2s,0}(\RR^n\times\RR^n)$,
while previously we had that $p_1$ belongs to
$S^{-2s}_{2s,1-2s}(\RR^n\times\RR^n)$ due to the (non-zero) derivatives of the
vector field $\nu(x)$, which now are zero because of the condition
\eqref{eq:Derivatives_change_of_coordinates}. We notice that condition
\eqref{eq:Diffeomorphism_nondegeneracy} guarantees that the symbol $p_1$ indeed
belongs to the class $S^{-2s}_{2s,0}(\RR^n\times\RR^n)$. The remaining part of
the proof of Lemma \ref{lem:Parametrix} applies without the restriction
$2s>1-2s$, that is $s>1/4$, to the present setting, and we obtain a two-sided parametrix, $T_p$, with a
symbol $p(y,\eta)$ belonging to the class $S^{-2s}_{2s,0}(\RR^n\times\RR^n)$.
\end{proof}

We can now state the regularity result in the more general case when $0<s<1/2$.

\begin{lem}[Regularity of solutions]
\label{lem:Regularity_solutions_general_case}
Let $s\in (0,1/2)$, and let $\Phi:\RR^n\rightarrow\RR^n$ be a diffeomorphism which satisfies the hypotheses of Lemma \ref{lem:Parametrix_b}. Let $u \in H^k(\RR^n)$ be such that $Au \in H^l(\RR^n)$, for some real constants, $k$ and $l$. Then $u \in H^{l+2s}(\RR^n)$ and $\nu\dotprod\nabla u \in H^l(\RR^n)$.
\end{lem}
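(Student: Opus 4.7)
The plan is to mirror the proof of Lemma~\ref{eq:Regularity_solutions_restrictive_case}, but using the second parametrix construction from Lemma~\ref{lem:Parametrix_b} so that the restriction $s>1/4$ can be dropped. Starting from the decomposition $Au=T_{\tilde a}u+Eu$ in \eqref{eq:Decomposition}, together with \eqref{eq:Definition_E_with_domain_Sobolev} and the inclusion $H^k(\RR^n)\subset\hL^2_{\loc}(\RR^n)$, one first obtains $Eu\in H^m(\RR^n)$ for every $m\in\RR$, so $T_{\tilde a}u\in H^l(\RR^n)$.

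Next I would transfer the problem to coordinates in which the drift is constant. Because $\Phi,\Phi^{-1}\in\cC^\infty_b(\RR^n;\RR^n)$ and \eqref{eq:Diffeomorphism_nondegeneracy} holds, the pullback $\Phi^*$ is a bounded isomorphism of $H^m(\RR^n)$ for every $m\in\RR$. Setting $v:=\Phi^*u\in H^k(\RR^n)$, Lemma~\ref{lem:Change_of_coordinates_cut_symbol} gives $T_b v=\Phi^*(T_{\tilde a}u)\in H^l(\RR^n)$. I would then apply the two-sided parametrix $T_p$ from Lemma~\ref{lem:Parametrix_b}, obtaining $T_pT_bv=v+Rv$ for some $R\in OPS^{-\infty}(\RR^n)$. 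Since $p\in S^{-2s}_{2s,0}(\RR^n\times\RR^n)$ with $\delta=0<\rho=2s$, \cite[Proposition 7.5.5]{Taylor_vol2} yields that $T_p:H^l(\RR^n)\to H^{l+2s}(\RR^n)$ is bounded, while $R$ sends $H^k(\RR^n)$ to $H^m(\RR^n)$ for every $m\in\RR$. Hence $v\in H^{l+2s}(\RR^n)$, and pulling back by $\Phi^{-1}$ gives $u=(\Phi^{-1})^*v\in H^{l+2s}(\RR^n)$.

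The statement about the drift term then follows by subtraction. Using the splitting \eqref{eq:Split_tilde_a}--\eqref{eq:Tilde_a_1_2}, the symbol $\tilde a^1\in S^{2s}_{1,0}(\RR^n\times\RR^n)$ defines an operator bounded from $H^{l+2s}(\RR^n)$ to $H^l(\RR^n)$, while $T_{\tilde a^2}u=\nu\dotprod\nabla u$. Combining this with $T_{\tilde a}u=Au-Eu\in H^l(\RR^n)$ and the regularity $u\in H^{l+2s}(\RR^n)$ just established, we conclude $\nu\dotprod\nabla u\in H^l(\RR^n)$.

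The main obstacle I anticipate is not the parametrix argument itself, which is a direct transcription, but rather a careful justification that the change of coordinates $\Phi^*$ preserves each Sobolev space $H^m(\RR^n)$ boundedly, including for negative and fractional $m$; this is exactly where hypothesis \eqref{eq:Diffeomorphism_nondegeneracy} together with $\Phi,\Phi^{-1}\in\cC^\infty_b$ is used, via interpolation and duality starting from the elementary case of nonnegative integer $m$. Once this is in place, the rest of the proof is a mechanical application of Lemma~\ref{lem:Change_of_coordinates_cut_symbol} and Lemma~\ref{lem:Parametrix_b}.
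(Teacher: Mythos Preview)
Your proposal is correct and follows essentially the same approach as the paper: both arguments transfer to the $\Phi$-coordinates, apply the parametrix $T_p$ from Lemma~\ref{lem:Parametrix_b} to $T_bv=\Phi^*(T_{\tilde a}u)$, and invoke the Sobolev boundedness of $\Phi^*$ and $(\Phi^{-1})^*$ (which the paper states as Proposition~\ref{prop:Sobolev_spaces_diffeomorphism}). The only cosmetic difference is the final step: the paper first obtains $\partial_{y_n}v\in H^l$ in the new coordinates and pulls back via $\nu\cdot\nabla u=(\Phi^{-1})^*\partial_{y_n}v$, whereas you obtain $\nu\cdot\nabla u\in H^l$ by subtraction in the original coordinates using $T_{\tilde a^1}\in OPS^{2s}_{1,0}$; both are equally valid.
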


\begin{rmk}
The hypotheses of Lemma \ref{lem:Regularity_solutions_general_case} are relaxed in Theorem \ref{thm:Local_regularity}.
\end{rmk}

To prove Lemma \ref{lem:Regularity_solutions_general_case}, we use the following

\begin{prop}\cite[\S 4.2]{Taylor_vol1}
\label{prop:Sobolev_spaces_diffeomorphism}
Let $\Psi:\RR^n\rightarrow\RR^n$ be a diffeomorphism with bounded derivatives of all orders. Assume there is a positive constant, $C$, such that
$$
|J\Psi^{-1}(x)| \leq C,\quad\forall x \in \RR^n.
$$
Then, for all $l\in\RR$,
$$
\Psi^*:H^l(\RR^n)\rightarrow H^l(\RR^n)
$$
is a continuous linear map.
\end{prop}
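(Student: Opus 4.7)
I would prove the proposition in four steps, starting from $L^2$ and bootstrapping outward to all real $l$.

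First, I would establish the $L^2$ case by direct change of variables: for $u\in \cS(\RR^n)$,
\[
\|\Psi^*u\|_{L^2}^2 = \int_{\RR^n}|u(\Psi(x))|^2\,dx = \int_{\RR^n} |u(y)|^2 |J\Psi^{-1}(y)|\,dy \leq C\|u\|_{L^2}^2,
\]
using the assumed bound on the Jacobian determinant. Density of $\cS(\RR^n)$ in $L^2(\RR^n)$ extends this to all of $L^2$.

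Next, for a positive integer $k$, I would iterate the chain rule
\[
\partial_{x_j}(u\circ\Psi)(x) = \sum_{i}(\partial_{y_i}u)(\Psi(x))\,\partial_{x_j}\Psi_i(x)
\]
to express $\partial^{\alpha}(\Psi^* u)$, for $|\alpha|\leq k$, as a finite linear combination of terms $(\partial^{\beta}u)\circ\Psi$ multiplied by bounded smooth coefficients built from derivatives of $\Psi$ of order at most $k$. Since $\Psi\in\cC^{\infty}_b(\RR^n;\RR^n)$, these coefficients are uniformly bounded, and applying Step 1 term by term gives $\|\Psi^* u\|_{H^k}\leq C_k\|u\|_{H^k}$.

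Third, I would extend to $H^{-k}$ for positive integers $k$ by duality. The formal $L^2$-adjoint of $\Psi^*$ is the operator $u\mapsto |J\Psi^{-1}|\cdot (u\circ\Psi^{-1})$. To see that this is of the same type, one checks that $\Psi^{-1}$ itself satisfies the hypotheses of the proposition: combining $\Psi\in\cC^{\infty}_b$ with the lower bound on $|\det J\Psi(y)|=1/|J\Psi^{-1}(\Psi(y))|$ (which follows from the upper bound on $|J\Psi^{-1}|$ together with boundedness of $J\Psi$), Cramer's rule gives uniform boundedness of $(J\Psi)^{-1}$, and then differentiating the identity $\Psi^{-1}\circ\Psi=\id$ repeatedly via Fa\`a di Bruno shows inductively that all derivatives of $\Psi^{-1}$ are bounded, while $|J\Psi(x)|\leq C'$ follows from the same considerations. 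Hence Step 2 applied to $\Psi^{-1}$ (together with the fact that multiplication by a bounded smooth function with bounded derivatives maps $H^k$ to $H^k$) shows the adjoint is bounded on $H^k$, and duality $H^{-k}=(H^k)^*$ yields boundedness of $\Psi^*$ on $H^{-k}$.

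Finally, I would cover all real $l$ by complex interpolation between $H^{-k}$ and $H^{k}$ for arbitrarily large $k$, using that the Bessel potential scale $\{H^l\}_{l\in\RR}$ is a complex interpolation scale. The main obstacle is the bookkeeping in Step 3: verifying that $\Psi^{-1}$ inherits the hypotheses of the proposition from those on $\Psi$. Once this symmetry between $\Psi$ and $\Psi^{-1}$ is established, the rest is a clean interpolation argument, and the proposition follows.
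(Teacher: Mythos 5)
Your proof is correct, and since the paper does not prove this proposition but simply cites \cite[\S 4.2]{Taylor_vol1}, the relevant comparison is with the standard textbook argument — which is exactly what you give: $L^2$ by change of variables, integer $H^k$ by the chain rule, $H^{-k}$ by duality after checking that $\Psi^{-1}$ inherits the hypotheses, and general $l$ by complex interpolation. The one point worth making explicit is that you correctly identified the real content of the hypothesis $|J\Psi^{-1}|\leq C$, namely the lower bound $|\det J\Psi|\geq 1/C$ that makes $\Psi^{-1}\in\cC^{\infty}_b$ and hence the whole argument symmetric in $\Psi$ and $\Psi^{-1}$.
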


\begin{proof}[Proof of Lemma \ref{lem:Regularity_solutions_general_case}] 
From Lemma \ref{lem:Parametrix_b}, we obtain that there is a symbol $p \in S^{-2s}_{2s,0}(\RR^n\times\RR^n)$ such that
$$
T_p T_b = I \hbox{ mod } OPS^{-\infty}(\RR^n).
$$
Thus, if $w\in H^k(\RR^n)$ is such that $T_b w \in H^l(\RR^n)$, then the
previous identity implies that $w$ belongs to $H^{l+2s}(\RR^n)$ and $w_{y_n}$
belongs to $H^{l}(\RR^n)$. We will use this fact together with Proposition
\ref{prop:Sobolev_spaces_diffeomorphism} to prove that $u \in H^{l+2s}(\RR^n)$
and $\nu\dotprod\nabla u \in H^{l}(\RR^n)$.

We first show that $T_b v$ belongs to the Sobolev space $H^l(\RR^n)$. Recall from identity \eqref{eq:Change_of_coordinates_b} that $T_b v(y)=T_{\tilde a} u (x)$, where we recall that $v=\Phi^* u$  and $x=\Phi(y)$, which implies that $T_b v(y) = Au(x) -Eu(x)$, where $Eu$ is defined in \eqref{eq:Definition_E}. Therefore, we have that
$$
T_b v(y)=\Phi^*(Au)(y) - \Phi^*(Eu)(y).
$$
By hypothesis, we have that $Au \in H^l(\RR^n)$, and so Proposition
\ref{prop:Sobolev_spaces_diffeomorphism} implies that $\Phi^*(Au)$ also belongs
to $H^l(\RR^n)$. From our assumption that $u \in H^k(\RR^n)\subset \hL_{\loc}(\RR^n)$ and
property \eqref{eq:Definition_E_with_domain_Sobolev}, we see that because $Eu$ belongs
to $H^l(\RR^n)$.  From Proposition \ref{prop:Sobolev_spaces_diffeomorphism}, it
follows that $\Phi^*(Eu) \in H^l(\RR^n)$, and so we can conclude that $T_b v$
belongs to $H^l(\RR^n)$ also. Thus the function $v$ belongs to
$H^{l+2s}(\RR^n)$ and $v_{y_n}$ belongs to $H^{l}(\RR^n)$. Recall that we have
$$
u=(\Phi^{-1})^*v\quad\hbox{and}\quad \nu\dotprod\nabla u =(\Phi^{-1})^*v_{y_n}.
$$
Applying Proposition \ref{prop:Sobolev_spaces_diffeomorphism} again implies
that the function $u$ is in $H^{l+2s}(\RR^n)$ and $\nu\dotprod\nabla u$ is in
$H^{l}(\RR^n)$.
\end{proof}

\begin{rmk}[Hypothesis of Lemma \ref{lem:Regularity_solutions_general_case}]
\label{rmk:Hyp_Reg_sol_all_s}
As in Remark \ref{rmk:Hyp_Reg_sol_restrictive_case}, the hypothesis that $u\in H^k(\RR^n)$, for some real constant $k$, in the statement of Lemma \ref{lem:Regularity_solutions_general_case}, can be replaced with the assumption that $u\in \hL^2_{\loc}(\RR^n)$, and we obtain that \eqref{eq:u_in_weighted_Sobolev_space} holds.
\end{rmk}

\begin{rmk}[Parametrix for $T_{\tilde a}$]
In Lemma \ref{lem:Parametrix_b}, we prove the existence of a two-sided parametrix for the pseudo-differential operator $T_b$, which is given by the operator $T_p$, with symbol $p$ in the H\"ormander class $S^{-2s}_{2s,0}(\RR^n\times\RR^n)$. Because the pseudo-differential operator $T_b$ satisfies identity \eqref{eq:Change_of_coordinates_b}, we can see that the operator
\begin{equation}
\label{eq:Parametrix_Q}
Q = (\Phi^{-1})^* T_p \Phi^*,
\end{equation}
is a two-sided parametrix for $T_{\tilde a}$, that is
$$
Q T_{\tilde a} u = T_{\tilde a} Q u = u \hbox{ mod } \cC^{\infty}(\RR^n),\quad\forall u \in \cS(\RR^n).
$$
We now discuss to what extent the two-sided parametrix of the operator $T_{\tilde a}$, $Q$, can be represented as a pseudo-differential operator.

When $s\in (1/4,1/2)$, we have that $1-2s<2s<1$. Therefore Lemma \ref{lem:Change_of_coordinates} shows that $Q$ can be represented as a pseudo-differential operator with symbol in the class $S^{-2s}_{2s,1-2s}(\RR^n\times\RR^n)$, which is consistent with Lemma \ref{lem:Parametrix}. Conversely, we can start from the two-sided parametrix, $T_q$, of the pseudo-differential operator $T_{\tilde a}$, given by Lemma \ref{lem:Parametrix}. Now $q$ is a symbol in the H\"ormander class $S^{-2s}_{2s,1-2s}(\RR^n\times\RR^n)$. According to the previous argument, 
$$
P = \Phi^* T_q (\Phi^{-1})^*,
$$
is a two-sided parametrix for $T_b$. Because the symbol $q$ belongs to $S^{-2s}_{2s,1-2s}(\RR^n\times\RR^n)$ and we assume that $1-2s<2s$, Lemma \ref{lem:Change_of_coordinates} gives us that $P$ can be represented as a pseudo-differential operator with symbol $p \in S^{-2s}_{2s,1-2s}(\RR^n\times\RR^n)$. In Lemma \ref{lem:Parametrix_b}, we show that $p$ is actually in a better symbol class, that is it belongs to $S^{-2s}_{2s,0}(\RR^n\times\RR^n)$.

When $s\in (0, 1/4]$, the condition $1-2s<2s$ is no longer fulfilled, and so we
cannot guarantee that we can represent $Q$ as a pseudo-differential
operator. This explains the difficulty in the construction of a two-sided
parametrix for the operator $T_{\tilde a}$ using the direct approach of \S
\ref{sec:First_construction_parametrix}.
\end{rmk}

\subsubsection{Construction of a diffeomorphism}
\label{subsubsec:Diffeomorphism}
We now want to build a diffeomorphism, $\Phi:\RR^n\rightarrow\RR^n$, which satisfies assumptions \eqref{item:Diffeomorphism_invertibility_smoothness} and \eqref{item:Auxiliary_function_invertibility_smoothness} of Lemma \ref{lem:Change_of_coordinates}, verifies \emph{locally} identity \eqref{eq:Derivatives_change_of_coordinates} of Lemma \ref{lem:Change_of_coordinates_cut_symbol}, and verifies condition \eqref{eq:Diffeomorphism_nondegeneracy} of Lemma \ref{lem:Parametrix_b}.

Let $\nu$ be a smooth vector field. Because we want identity \eqref{eq:Derivatives_change_of_coordinates} to be satisfied only \emph{locally}, we fix a point $x_0\in\RR^n$, and we assume that $\nu(x_0)\neq 0$. We may assume choose a system of coordinates such that $x_0=O$ and $\nu\dotprod e_n\neq 0$. We define the smooth vector field $\mu$ by
\begin{equation}
\label{eq:New_drift}
\mu(x) = \psi_r(x) \nu(x) + (1-\psi_r(x)) \nu(O),\quad\forall x \in \RR^n,
\end{equation}
where the positive constant $r$ will be suitably chosen below. We recall that the cut-off function $\psi_r$ is defined in \eqref{eq:Psi_r}. The vector field $\mu$ is a globally Lipschitz function and it is constant outside a compact set. By the Picard-Lindel\"of Theorem \cite[Theorem II.1.1]{Hartman}, for any $y'\in\RR^{n-1}$, there is a unique global solution to
\begin{equation}
\label{eq:ODE}
\begin{aligned}
\begin{cases}
\frac{d}{dt} \Phi(y',t) = \mu(\Phi(y',t)), & \forall t\neq 0,\\
\Phi(y',0) = (y',0). & 
\end{cases}
\end{aligned}
\end{equation}
We want to use $\Phi(y',t)$ to introduce a new system of coordinates, that is, for all $x \in \RR^n$, we want to show that there is a unique $(y',t)\in\RR^n$ such that $x = \Phi(y',t)$. For this purpose, let $G:\RR^n\times\RR^n\rightarrow\RR^n$ be defined by
$$
G(x,y',t) = x - \Phi(y',t),\quad \forall x \in \RR^n,\ \forall (y',t)\in\RR^n.
$$
From our assumption that $\nu_n(O)\neq 0$, we also have that $\mu_n(O)\neq 0$. We see that
$$
|J_{(y',t)} \Phi(O,O)| \neq 0,
$$
and so, the Implicit Function Theorem gives that there are neighborhoods of $O$, which we denote by $W$ and $V$, such that for all $x \in W$, there is a unique $(y',t) \in V$ such that 
\begin{equation}
\label{eq:1to1_correspondence}
x = \Phi(y',t).
\end{equation} By choosing $r$ small enough, we can make the `oscillation' of the vector field $\mu$ small enough, so that the proof of the Implicit Function Theorem implies that $B_{2r}(O) \subset W$. Because the vector field $\mu$ is constant outside the ball $B_{2r}(O)$ by identity \eqref{eq:New_drift}, we can extended the 1-1 correspondence \eqref{eq:1to1_correspondence}, between points $x\in W$ and $(y',t)\in V$, to all points $x\in\RR^n$ and $(y',t)\in\RR^n$. To see this, let
$$
S=\partial W \cup (\{x_n=0\}\backslash W).
$$ 
For each $x \in W^c$, let $x' \in S$ be the closest point to $x$, with respect to the Euclidean distance, with the property that $x-x'$ and $\nu(O)$ are linearly dependent. Because $x'\in S$, there is $(y',t')\in V$ such that $x'=\Phi(y',t')$. Now let $t$ be such that
$$
(t-t') \nu(O) = x-x',
$$
then $x=\Phi(y',t)$. Therefore, the function $\Phi:\RR^n\rightarrow\RR^n$ is a bijection.

Because $\mu\in \cC^{\infty}_b(\RR^n;\RR^n)$, \cite[Corollary V.4.1]{Hartman} shows that $\Phi$ belongs to $\cC^{\infty}_b(\RR^n;\RR^n)$. We notice that 
$$
|J\Phi(y',0)| \neq 0, \quad\forall y'\in\RR^{n-1}.
$$
Because $\mu$ is a constant vector field outside the ball $B_{2r}(O)$, we can choose $r$ small enough so that
\begin{equation}
\label{eq:Non_zero_Jacobian}
|J\Phi(y',t)| \neq 0, \quad\forall (y',t)\in\RR^n,
\end{equation}
and we can also arrange so that condition \eqref{eq:Diffeomorphism_nondegeneracy} and assumption \eqref{item:Auxiliary_function_invertibility_smoothness} of Lemma \ref{lem:Change_of_coordinates} hold.

From \eqref{eq:Non_zero_Jacobian}, the Inverse Function Theorem now shows that $\Phi^{-1}$ is a $C^1$ function on $\RR^n$ with bounded derivatives of first order. From the identity,
$$
J\Phi^{-1} = (J\Phi \circ \Phi^{-1})^{-1},
$$
we see that, the fact that the function $\Phi$ is smooth and the inverse function $\Phi^{-1}$ is $C^1$, and both have bounded derivatives, implies that $\Phi^{-1}$ is a $C^2$ function with bounded derivatives up to order two. Inductively, we obtain that $\Phi^{-1}$ is a $C^{\infty}$ function with bounded derivatives of any order.

Moreover, for any function $u\in \cC^1(\RR^n)$, applying the change of variable $v=\Phi^*u$ and $x=\Phi(y)$, we have that
\begin{align*}
v_{y_n}(y) &= \sum_{i=1}^n \frac{\partial u}{\partial x_i}(\Phi(y)) \frac{\partial \Phi^i}{\partial y_n}(y)\\
&= \mu(x) \dotprod \nabla u(x)\quad\hbox{(by \eqref{eq:ODE}, and the fact that $x=\Phi(y)$).}
\end{align*}
Therefore, identity \eqref{eq:Derivatives_change_of_coordinates} is satisfied \emph{locally}, for all $x\in B_{r}(O)$, because $\mu(x)=\nu(x)$ on $B_r(O)$ by construction.

The preceding argument proves
\begin{lem}[Construction of a diffeomorphism with suitable local properties]
\label{lem:Construction_diffeomorphism_local}
Let $x_0 \in \RR^n$ be such that $\nu(x_0) \neq 0$, and let $\nu\in \cC^{\infty}(\RR^n;\RR^n)$. Then there is diffeomorphism, $\Phi:\RR^n\rightarrow\RR^n$, which satisfies assumptions \eqref{item:Diffeomorphism_invertibility_smoothness} and \eqref{item:Auxiliary_function_invertibility_smoothness} of Lemma \ref{lem:Change_of_coordinates}, and verifies condition \eqref{eq:Diffeomorphism_nondegeneracy} of Lemma \ref{lem:Parametrix_b}, and there is a positive constant, $r$, such that identity \eqref{eq:Derivatives_change_of_coordinates} of Lemma \ref{lem:Change_of_coordinates_cut_symbol} holds, for all $x \in B_r(x_0)$.
\end{lem}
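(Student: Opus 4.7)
The plan is to construct $\Phi$ as the flow of a suitably modified version of the vector field $\nu$. By applying an orthogonal change of coordinates we may assume $x_0=O$ and $\nu_n(O)\neq 0$. Since $\nu$ is only assumed smooth (not with bounded derivatives), I will first globalize it: pick a small $r>0$ to be fixed below, let $\psi_r$ be the cutoff from \eqref{eq:Psi_r}, and define
\begin{equation*}
\mu(x) = \psi_r(x)\nu(x) + (1-\psi_r(x))\nu(O),
\end{equation*}
so that $\mu\in\cC^{\infty}_b(\RR^n;\RR^n)$, $\mu\equiv\nu$ on $B_r(O)$, and $\mu\equiv\nu(O)$ outside $B_{2r}(O)$. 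Define $\Phi(y',t)$ to be the value at ``time'' $t$ of the integral curve of $\mu$ starting at $(y',0)$; by Picard--Lindel\"of this flow exists globally in $t$ for every $y'\in\RR^{n-1}$, and standard regularity theorems for ODEs with smooth bounded right-hand sides give $\Phi\in\cC^{\infty}_b(\RR^n;\RR^n)$.

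Next I would verify that $\Phi$ is a bijection. Locally at the origin, $\partial_t\Phi(O,0)=\mu(O)=\nu(O)$ has nonzero $n$-th component, while $\partial_{y'}\Phi(O,0)$ is the inclusion $\RR^{n-1}\embed\RR^n$, so $J\Phi(O)$ is invertible; the implicit function theorem gives neighborhoods $V,W$ of $O$ on which $\Phi\colon V\to W$ is a diffeomorphism. Choosing $r$ small forces $\mu$ to be a small perturbation of the constant field $\nu(O)$, so quantitative estimates in the proof of the implicit function theorem let me arrange $B_{2r}(O)\subset W$. Outside $B_{2r}(O)$ the flow is the translation by $t\nu(O)$, which then allows the local 1-1 correspondence on $W$ to be propagated along the integral curves of the constant vector field $\nu(O)$ to obtain a global bijection. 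The inverse function theorem, combined with the chain rule identity $J\Phi^{-1}=(J\Phi\circ\Phi^{-1})^{-1}$ and an induction on the order of derivatives, then shows $\Phi^{-1}\in\cC^{\infty}_b(\RR^n;\RR^n)$, giving assumption \eqref{item:Diffeomorphism_invertibility_smoothness}.

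For assumption \eqref{item:Auxiliary_function_invertibility_smoothness} and condition \eqref{eq:Diffeomorphism_nondegeneracy} of Lemma \ref{lem:Parametrix_b}, I would again exploit the smallness of $r$. When $\mu$ is uniformly close to the constant field $\nu(O)$, the differential $J\Phi(y)$ is uniformly close to the differential of the translation flow of $\nu(O)$, which is invertible with uniformly bounded inverse. The matrix $H(y,w)$ of \eqref{eq:Auxiliary_function_invertibility_smoothness} is an average of such differentials and therefore inherits the same bounds, giving $H^{-1}\in\cC^{\infty}_b(\RR^n\times\RR^n;\RR^{n^2})$ and the two-sided bound on $(J\Phi(y))^{-T}$.

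Finally, identity \eqref{eq:Derivatives_change_of_coordinates} is automatic from the definition of the flow: writing $v=\Phi^*u$ and applying the chain rule,
\begin{equation*}
v_{y_n}(y) = \sum_{i=1}^n \frac{\partial u}{\partial x_i}(\Phi(y))\,\frac{\partial\Phi^i}{\partial y_n}(y) = \mu(\Phi(y))\dotprod\nabla u(\Phi(y)),
\end{equation*}
since $\partial_{y_n}\Phi=\mu\circ\Phi$ by \eqref{eq:ODE}. On $B_r(O)$ we have $\mu=\nu$, so this yields $\nu(x)\dotprod\nabla u(x)=v_{y_n}(y)$ on the desired neighborhood. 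The subtle step is the first one: ensuring that a single choice of $r$ simultaneously makes the implicit function theorem applicable on all of $B_{2r}(O)$, gives global invertibility of $J\Phi$ and of $H(y,w)$, and yields the uniform bounds required by Lemmas \ref{lem:Change_of_coordinates} and \ref{lem:Parametrix_b}. This will follow from a quantitative version of the implicit function theorem applied to $\mu-\nu(O)$, whose $\cC^1$ norm on $\RR^n$ can be made arbitrarily small by shrinking $r$ (after absorbing derivatives of $\psi_r$ using the fact that $\nu(x)-\nu(O)=O(r)$ on $B_{2r}(O)$).
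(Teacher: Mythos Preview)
Your proposal is correct and follows essentially the same route as the paper: globalize $\nu$ to the bounded vector field $\mu$ via the cutoff $\psi_r$, define $\Phi$ as the flow of $\mu$ from the hyperplane $\{y_n=0\}$, obtain local invertibility via the implicit function theorem, extend globally using that $\mu$ is constant outside $B_{2r}(O)$, and verify \eqref{eq:Derivatives_change_of_coordinates} by the chain rule. Your discussion of how smallness of $r$ yields the uniform bounds for $H(y,w)^{-1}$ and $(J\Phi)^{-T}$ is in fact slightly more explicit than the paper's, which simply asserts that $r$ can be chosen so that these conditions hold.
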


\subsubsection{Localization}
\label{subsubsec:Localization}
Because we are interested in the local regularity of $u$, we only need to study $\chi u$, where $\chi:\RR^n\rightarrow [0,1]$ is a smooth function with compact support.

\begin{rmk}[Comparison between Lemma \ref{lem:Regularity_solutions_general_case} and Theorem \ref{thm:Local_regularity}]
The difference between Lemma \ref{lem:Regularity_solutions_general_case} and Theorem \ref{thm:Local_regularity} is that Lemma  \ref{lem:Regularity_solutions_general_case} assumes the existence of a diffeomorphism, $\Phi$, satisfying suitable properties, while in Theorem \ref{thm:Local_regularity} this assumption is replaced by the condition that $\nu(x_0)\neq 0$. The result that allows us to weaken the hypotheses of Lemma \ref{lem:Regularity_solutions_general_case}, to only assume that $\nu(x_0)\neq 0$ in the statement of Theorem \ref{thm:Local_regularity}, is Lemma \ref{lem:Construction_diffeomorphism_local}.
\end{rmk}

\begin{proof}[Proof of Theorem \ref{thm:Local_regularity}]
For clarity, we divide the proof into three steps. The first step is an application of Lemma \ref{lem:Construction_diffeomorphism_local}, which allows us to locally change the coordinates so that we can construct a two-sided parametrix for the pseudo-differential operator in the new system of coordinates. In the second step, we collect various intermediate results which are ingredients in the iteration procedure employed to obtain the local regularity of solutions. The iteration procedure is described in the third step.

\setcounter{step}{0}
\begin{step}[Change of coordinates]
\label{step:Change_of_coordinates}
Without loss of generality, we may assume that $r=1$, that is $\psi_r=\psi$,
where we recall that the cut-off function $\psi_r$ is defined in
\eqref{eq:Psi_r} and $\psi$ is defined by \eqref{eq:Psi}. Therefore, we have
that $\psi Au \in H^l(\RR^n)$. Because we assume that $\nu(x_0)\neq 0$, we may
choose a system of coordinates such that $x_0=O$ and $\nu(O)\cdot e_n=\nu_n(O)\neq 0$. By Lemma \ref{lem:Construction_diffeomorphism_local} there is a diffeomorphism, $\Phi:\RR^n\rightarrow\RR^n$, which satisfies assumptions \eqref{item:Diffeomorphism_invertibility_smoothness} and \eqref{item:Auxiliary_function_invertibility_smoothness} of Lemma \ref{lem:Change_of_coordinates}, and verifies condition \eqref{eq:Diffeomorphism_nondegeneracy} of Lemma \ref{lem:Parametrix_b}. Moreover, there is a positive constant, $\bar r$, such that identity \eqref{eq:Derivatives_change_of_coordinates} of Lemma \ref{lem:Change_of_coordinates_cut_symbol} holds, for all $x \in B_{\bar r}(x_0)$. Let $r_0:=(1\wedge \bar r)/4$, and let the vector field $\mu$ be defined by \eqref{eq:New_drift}, where the constant $r$ is replaced by $\bar r$. We consider the new symbol, $\alpha(x,\xi)$, obtained from $\tilde a$ by replacing the drift coefficient $\nu$ by $\mu$, that is
$$
\alpha(x,\xi)=|\xi|^{2s} \varphi(2 \xi) + i \mu(x)\dotprod\xi,\quad\forall x,\xi\in\RR^n.
$$
Then the diffeomorphism $\Phi$ satisfies the hypotheses of Lemma \ref{lem:Change_of_coordinates_cut_symbol}, with the vector field $\nu$ replaced by $\mu$. We obtain that there is a symbol $\beta \in S^{1}_{1,0}(\RR^n\times\RR^n)$ such that
\begin{equation}
\label{eq:Change_of_coordinates_alpha}
\Phi^* T_{\alpha} (\Phi^{-1})^* = T_{\beta}.
\end{equation}
\end{step}

\begin{step}[Intermediate results]
\label{step:Intermediary_results} 
Since $\psi\in\cC^{\infty}_c(\RR^n),$ there is a real $k$ so that $\psi u\in H^k(\RR^n).$
Let $J\geq 1$ be the smallest integer such that $k+2sJ \geq l$. We choose a family of smooth cut-off functions, $\{\chi_j:j=1,\ldots, J+1\}$, with values in $[0,1]$ such that 
\begin{align}
\label{eq:chi_j_equal_1}
&\chi_j = 1\hbox{ on } B_{r_0}(O),\quad\forall j=1,\ldots, J+1, \\
\label{eq:chi_j_equal_0}
&\chi_j = 0\hbox{ on } B^c_{2r_0}(O),\quad\forall j=1,\ldots, J+1,\\
\label{eq:_support_chi_j}
&\hbox{supp } \chi_{j+1} \subseteq \{ \chi_j=1\},\quad\forall j=1,\ldots,J.
\end{align}
From \cite[Property (4.2.19)]{Taylor_vol1} and using the fact that $\hbox{supp } \chi_j \subset \{\psi=1\}$, it follows that 
\begin{equation}
\label{eq:Cutoff_u_Sobolev}
\chi_j u\in H^k(\RR^n),\quad \forall j=1,\ldots, J+1.
\end{equation}
Our goal is to show that
\begin{equation}
\label{eq:cutoff_T_alpha_cutoff}
\chi_{j+1}T_{\alpha}\chi_j u \in H^l(\RR^n),\quad \forall j=1,\ldots,J.
\end{equation}
Because $\nu=\mu$ on $B_{2r_0}(O)$, and the support of each $\chi_j$ is contained in $B_{2r_0}(O)$, we have
$$
\chi_j T_{\alpha} u = \chi_j T_{\tilde a} u,\quad\forall j=1,\ldots, J+1.
$$
We recall that $T_{\tilde a} u = Au + Eu$, where $Eu$ is defined as in
\eqref{eq:Definition_E}. From our assumption that $\psi Au \in H^l(\RR^n)$, we
have that $\chi_jAu$ is in $H^l(\RR^n)$, by \eqref{eq:_support_chi_j},
definition \eqref{eq:Psi} of $\psi$, and the choice of the constant $r_0$. Because we assume
that $u\in \cS'_{(0,s)}(\RR^n)$, we know that $Eu \in \cC^{\infty}_t(\RR^n),$ from
\eqref{eq:Definition_E_with_domain_distrib}. Therefore, we can conclude that
\begin{equation}
\label{eq:cutoff_T_alpha}
\chi_{j} T_{\alpha} u \in H^l(\RR^n),\quad \forall j=1,\ldots, J+1.
\end{equation}
We can write
$$
\chi_{j+1} T_{\alpha} u = \chi_{j+1} T_{\alpha} \chi_j u  + \chi_{j+1} T_{\alpha} (1-\chi_j) u, \quad \forall j=1,\ldots, J.
$$
The preceding identity together with \eqref{eq:cutoff_T_alpha} shows that to obtain \eqref{eq:cutoff_T_alpha_cutoff}, it is enough to establish 
\begin{equation}
\label{eq:T_alpha_cutoff}
\chi_{j+1}T_{\alpha} (1-\chi_j) u \in H^l(\RR^n),\quad \forall j=1,\ldots, J+1.
\end{equation}
We can write $T_{\alpha}=T_{\alpha^1}+T_{\alpha^2}$ as the sum of two pseudo-differential operators with symbols
$$
\alpha^1(\xi)=|\xi|^{2s}\varphi(2\xi)\quad\hbox{and}\quad \alpha^2(x,\xi)=i\mu(x)\dotprod\xi,\quad\forall x,\xi\in \RR^n.
$$
We see that
$$
T_{\alpha^2} v(x) = \mu(x)\dotprod\nabla v(x),\quad\forall v \in \cS'(\RR^n).
$$
From \eqref{eq:_support_chi_j}, we obtain that $\chi_{j+1} T_{\alpha^2}
(1-\chi_j) u =0$ on $\RR^n$. We also see that $T_{\alpha^1}$ is a classical
pseudo-differential operator, and hence pseudolocal, so \cite[Proposition
7.4.1]{Taylor_vol2} implies that the function $\chi_{j+1} T_{\alpha^1}
(1-\chi_j) u$ is smooth with compact support, hence contained in
$H^l(\RR^n)$. Because both \eqref{eq:cutoff_T_alpha} and
\eqref{eq:T_alpha_cutoff} hold, we obtain that property
\eqref{eq:cutoff_T_alpha_cutoff} holds.
\end{step}

\begin{step}[Iteration procedure]
\label{step:Iteration_procedure}
We now employ an iteration procedure to prove the local regularity of
solutions. We denote $w_j=\chi_j u$, where the cut-off functions,
$\{\chi_j:j=1,\ldots, J+1\}$, are chosen in Step
\ref{step:Intermediary_results}. We consider the change of coordinates
$w_j(x)=v_j(y)$ and $x=\Phi(y)$. From identity
\eqref{eq:Change_of_coordinates_alpha}, we have that
$$
T_{\beta} v_{j+1}(y)=  \Phi^* T_{\alpha} w_{j+1}(y),\quad\forall j=1,\ldots, J.
$$
Any smooth function with compact support, $\chi:\RR^n\rightarrow [0,1]$, can be
viewed as a symbol in the class $S^0_{1,0}(\RR^n\times\RR^n)$. Because $\alpha$
is a symbol in $S^1_{1,0}(\RR^n\times\RR^n)$, the commutator rule \cite[Chapter
7, Identity (3.24)]{Taylor_vol2} gives that there is symbol, $e \in
S^0_{1,0}(\RR^n\times\RR^n)$, such that
\begin{equation}
\label{eq:RHS_T_alpha}
T_{\alpha} (\chi u) = \chi T_{\alpha} u + T_e u,\quad\forall u \in \cS'(\RR^n).
\end{equation}
Using the fact that
\begin{align*}
T_{\alpha} w_{j+1}(y) &= T_{\alpha} \chi_{j+1}\chi_j u(x)\\
&=\chi_{j+1} T_{\alpha}\chi_j u(x) + T_e \chi_j u(x) \quad\hbox{(by identity \eqref{eq:RHS_T_alpha})} \\
&=\chi_{j+1} T_{\alpha}w_j(x)  + T_e w_j(x),
\end{align*}
it follows that
\begin{equation}
\label{eq:Induction_relation}
T_{\beta} v_{j+1}(y)= \Phi^* (\chi_{j+1} T_{\alpha} w_{j} + T_e w_j)(y),\quad\forall j=1,\ldots, J.
\end{equation}

We now prove inductively that
\begin{equation}
\label{eq:Induction_conclusion}
v_{j+1}  \in H^{l\wedge (k + (j-1)2s)+2s}(\RR^n).
\end{equation}
If $j=1$, we have shown in \eqref{eq:cutoff_T_alpha_cutoff} that the function
$\chi_{j+1} T_{\alpha} w_j$ belongs to $H^l(\RR^n)$, and because the symbol $e
\in S^0_{1,0}(\RR^n\times\RR^n)$ and the function $w_j \in H^k(\RR^n)$ by
\eqref{eq:Cutoff_u_Sobolev}, we have that $ T_e w_j$ belongs to
$H^k(\RR^n)$. Therefore, the function $\chi_{j+1} T_{\alpha} w_{j}(y) + T_e
w_j$ belongs to $H^{l\wedge k}(\RR^n)$. Applying Proposition
\ref{prop:Sobolev_spaces_diffeomorphism}, we obtain that the right-hand side in
identity \eqref{eq:Induction_relation} also belongs to $H^{l\wedge k}(\RR^n)$. By Lemma \ref{lem:Parametrix_b}, the operator $T_{\beta}$ admits
a left-parametrix, $T_p$, with symbol $p \in
S^{-2s}_{2s,0}(\RR^n\times\RR^n)$. Thus, it follows that property
\eqref{eq:Induction_conclusion} holds when $j=1$.

Assume now that property \eqref{eq:Induction_conclusion} holds for $j=j_0 \geq
1$. We want to prove that \eqref{eq:Induction_conclusion} holds for
$j=j_0+1$. Using the fact that
$$
v_{j_0+1}  \in H^{l\wedge (k + (j_0-1)2s)+2s}(\RR^n),
$$
and that $w_{j_0+1}=(\Phi^{-1})^* v_{j_0+1}$, we obtain from Proposition
\ref{prop:Sobolev_spaces_diffeomorphism} that
$$
w_{j_0+1}  \in H^{l\wedge (k + (j_0-1)2s)+2s}(\RR^n).
$$
To prove that property \eqref{eq:Induction_conclusion} holds in the case
$j=j_0+1$, we can apply the same argument that we employed to prove
\eqref{eq:Induction_conclusion} in the case when $j=1$, with the observation
that we replace the role of $w_1$ with that of $w_{j_0+1}$, and the role of $k$
is replaced with that of $l\wedge (k + (j_0-1)2s)+2s$.

When $j=J+1$, we see that $k + 2s J \geq l$, and so $l\wedge (k + (j-1)2s) = l$. From \eqref{eq:Induction_conclusion}, it follows that
$$
v_{J+1}  \in H^{l+2s}(\RR^n).
$$
As $T_{\beta}v_{J+1}\in H^{l}(\RR^n),$ we easily conclude, as before, that 
\begin{equation}
  \partial_{y_n}v_{J+1}\in H^{l}(\RR^n).
\end{equation}
Recall that 
$$
w_{J+1}=(\Phi^{-1})^*v_{J+1}\quad\hbox{and}\quad \nu\dotprod\nabla w_{J+1} =(\Phi^{-1})^*\partial_{y_n} v_{J+1}.
$$
Proposition \ref{prop:Sobolev_spaces_diffeomorphism} implies that $w_{J+1}$ is
in $H^{l+2s}(\RR^n)$ and $\nu\dotprod\nabla w_{J+1}$ is in $H^{l}(\RR^n)$, and
since $\chi_{J+1}$ can be any smooth function with values in $[0,1]$ and
compact support in $B_{r_0}(O)$, the conclusion follows immediately.
\end{step}
This concludes the proof.
\end{proof}

\section{The Green's Kernel}
In~\cite{Bogdan_Jakubowski_2012} and \cite{Chen_Kim_Song_2012} estimates are
given for the Green's kernel on a bounded domain, assuming that $\frac 12<s<1,$
but allowing a somewhat singular coefficient for the vector field. In our
setting for this range of $s,$ with a smooth coefficient for the vector field,
the operator $A$ is a classical, elliptic pseudodifferential operator of order
$2s,$ so the calculation of the leading singularity of the Green's function is
trivial.  In the previous sections we employed a change of coordinates
$y=\Phi(x),$ to obtain a parametrix for the fundamental solution for the
operators $Au(x)=(-\Delta)^su(x)+\nu(x)\cdot\nabla u(x),$ for $0<s<\frac 12,$
under the assumption that $\nu(x)$ is non-vanishing. In this section we examine
the leading term in the asymptotic expansion, in the ``$y$''-coordinates, of
this operator's kernel along the diagonal.

The kernel of the leading term, after
applying the change of variables, $y=\Phi(x),$ is given by
\begin{equation}
  K(y,z)=\frac{1}{(2\pi)^n}\int\limits_{\RR^n}\frac{\psi(\eta)e^{i\eta\cdot
      z}d\eta}
{|(J\Phi(y))^{-t}\eta|^{2s}+i\eta_n},
\end{equation}
where $z=y-\ty$ as usual.
After changing variables with $(J\Phi(y))^{t}\chi= \eta,$ this becomes
\begin{equation}
  K(y,z)=\frac{1}{(2\pi)^n}\int\limits_{\RR^n}\frac{\psi((J\Phi(y))^{t}\chi)e^{i\chi\cdot
      J\Phi(y) z}|J\Phi(y)|d\chi}
{|\chi|^{2s}+i((J\Phi(y))^{t}\chi)_n}.
\end{equation}
We finally choose an orthogonal transformation $U(y)$ so that
$b(y)(U(y)\chi)_n=((J\Phi(y))^{t}\chi)_n,$ where we normalize so that $b(y)>0.$
Setting $\xi= U(y)\chi,$ the integral becomes
\begin{equation}\label{eqn4.3.001}
  K(y,z)=\frac{1}{(2\pi)^n}\int\limits_{\RR^n}\frac{\psi((J\Phi(y))^{t}U(y)^t\xi)e^{i\xi\cdot
      U(y)J\Phi(y) z}|J\Phi(y)|d\xi}
{|\xi|^{2s}+ib(y)\xi_n}.
\end{equation}
Up to a linear transformation, the leading order singularities in $K(y,z)$
as $z\to 0,$  are determined by asymptotic  evaluations  of the Fourier
transforms
\begin{equation}\label{eqn.4.001}
  E_{s,b}(x)=\int\limits_{\RR^n}\frac{e^{ix\cdot\xi}d\xi}{|\xi|^{2s}+ib\xi_n},
\end{equation}
as $x\to 0.$ Here $b$ is a positive constant. In fact,
equation~\eqref{eqn4.3.001} shows that, up to a smooth error:
\begin{equation}
  K(y,z)=E_{s,b(y)}(U(y)J\Phi(y) z)\cdot |J\Phi(y)|.
\end{equation}
 $E_{s,b}(x)$ is the kernel for
Green's function of 
the constant coefficient operator $(-\Delta_{\RR^n})^{s}+b\partial_{x_n}.$ 
The ``Green's function'' for the operator
$(-\Delta_{\RR^{n-1}})^{s}+b\partial_{x_n}$ is the kernel, $p^0_s(x';x_n),$ of
the heat operator $e^{-\frac{x_n}{b}(-\Delta_{\RR^{n-1}})^{s}},$ which is supported in
the half space $\{(x',x_n):\:0<x_n\}.$ As we shall see, when $0<s<\frac 12,$
there is an echo of this in the kernel for $E_{s,b}(x);$ its leading term is
\begin{equation}
  O\left(\frac{1}{[x_n^{\frac 1s}+|x'|^2]^{\frac{n-1}{2}}}\right)\text{ where }x_n>0,
\end{equation}
 and
 \begin{equation}
   O\left(\frac{1}{[x_n^2+|x'|^2]^{\frac{n-2+2s}{2}}}\right)\text{ where }x_n<0.
 \end{equation}
Note that when $2s<1,$ then $n-2+2s<n-1.$ Notice also that the homogeneities of
these terms are different.

\subsection{Asymptotics of $E_{s,b}$}
To evaluate the asymptotic behavior of the integral in~\eqref{eqn.4.001}, as
$|x|\to 0,$  we split it into an integral over the first $n-1$
variables and an integral over $\xi_n:$
\begin{equation}\label{eqn4.5.0002}
\begin{split}
  E_{s,b}(x)&=\int\limits_{\RR^{n-1}}\int\limits_{-\infty}^{\infty}
\frac{e^{ix'\cdot\xi'}e^{ix_n\xi_n}d\xi_n d\xi'}{(|\xi'|^2+\xi_n^2)^{s}+ib\xi_n}\\
  &=\omega_{n-2}\int\limits_{0}^{\infty}
\int\limits_{0}^{\pi}\int\limits_{-\infty}^{\infty}\frac{
  e^{ix_n\xi_n} d\xi_ne^{ir|x'|\cos\theta}\sin^{n-3}\theta
d\theta r^{n-2}dr}{(r^2+\xi_n^2)^{s}+ib\xi_n}\\
&=c_n\omega_{n-2}
  \int\limits_{0}^{\infty}\int\limits_{-\infty}^{\infty}\frac{e^{ix_n\xi_n}d\xi_n}
{(r^2+\xi_n^2)^{s}+ib\xi_n}
\frac{J_{\frac{n-3}{2}}(r|x'|) r^{n-2}dr }{(r|x'|)^{\frac{n-3}{2}}},
\end{split}
\end{equation}
where $x=(x',x_n),\,\xi=(\xi',\xi_n),$ and 
\begin{equation}
  c_n=\sqrt{\pi}2^{\frac{n-3}{2}}\Gamma\left(\frac{n-2}{2}\right).
\end{equation}
To simplify the computation we let
\begin{equation}
  \xi_n=r\tau,\text{ and }\beta=br^{1-2s},
\end{equation}
obtaining
\begin{equation}\label{eqn4.5}
  E_{s,b}(x)=
  \frac{c_n\omega_{n-2}}{|x'|^{\frac{n-3}{2}}}
  \int\limits_{0}^{\infty}\int\limits_{-\infty}^{\infty}
  \frac{e^{irx_n\tau}d\tau J_{\frac{n-3}{2}}(r|x'|) r^{\frac{n-1}{2}}r^{1-2s}dr }{(1+\tau^2)^{s}+i\beta\tau}.
\end{equation}
To evaluate the $\tau$-integral, we consider the function
$\frac{1}{(1+\tau^2)^{s}+i\beta\tau}$ to be a single valued analytic function in the slit
region
\begin{equation}
  D=\CC\setminus (-i\infty,-i]\cup [i,i\infty).
\end{equation}
A careful examination of the denominator on the boundary of $D$ reveals that it
has a single pole. This pole is of the form $\tau =iy(\beta),$ where
$y(\beta)\in (0,1)$ satisfies the equation:
\begin{equation}
  (1-y^2(\beta))^s=\beta y(\beta).
\end{equation}
As $\beta\to 0,$ we see that
\begin{equation}
  y(\beta)=1-\frac{\beta^{\frac{1}{s}}}{2}\left(1+\sum_{j=1}^{\infty}a_j\beta^{\frac{j}{s}}\right),
\end{equation}
where the sum is convergent in some neighborhood of $\beta=0.$ As
$\beta\to\infty,$ we can show that
\begin{equation}\label{eqn4.15.004}
  y(\beta)=\frac{1}{\beta}\left(1+\sum_{j=1}^{\infty}\frac{b_j}{\beta^{2j}}\right),
\end{equation}
where again the series in convergent for $1/\beta$ in a neighborhood of
zero. 

To compute the $\tau$-integral for $x_n>0$ we use the contour $\Gamma_R^+$
shown in Figure~\ref{fig1};  letting $R$ go to infinity, we easily show that
\begin{equation}\label{eqn4.11}
  \lim_{R\to\infty}\int\limits_{-R}^{R}\frac{r^{1-2s}\ e^{irx_n\tau}
    d\tau}{(1+\tau^2)^{s}+i\beta\tau}=
\frac{2\pi e^{-rx_n y(\beta)}(1-y^2(\beta))}{b(1-(1-2s)y^2(\beta))}+
4\pi r^{1-2s}\int\limits_{1}^{\infty}\frac{e^{-rx_n\tau}\sin(\pi s)(\tau^2-1)^s
  d\tau}{|(\tau^2-1)^se^{\pi i s}-\beta\tau|^2}.
\end{equation}
\begin{figure}[H]
\centering
{\epsfig{file=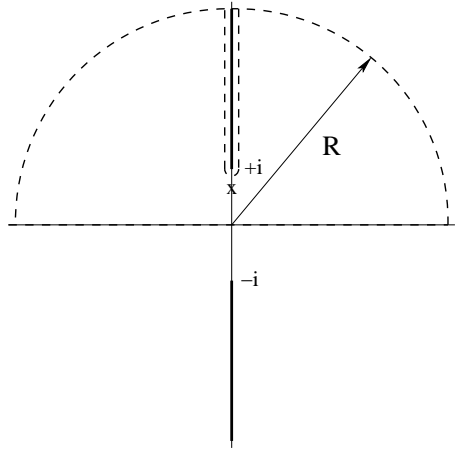,width=6cm}}
\caption{The cut-plane where the $\tau$-integrand is single valued showing the
  contour $\Gamma^+_R$ and the location of the pole.}
\label{fig1}
\end{figure}
A similar calculation using the reflection of $\Gamma^+_{R}$ across the real
axis gives that for $x_n<0,$
\begin{equation}\label{eqn4.12}
  \lim_{R\to\infty}\int\limits_{-R}^{R}\frac{r^{1-2s} e^{irx_n\tau}
    d\tau}{(1+\tau^2)^{s}+i\beta\tau}=
4\pi r^{1-2s}\int\limits_{1}^{\infty}\frac{e^{-r|x_n|\tau}\sin(\pi s)(\tau^2-1)^s
  d\tau}{|(\tau^2-1)^se^{\pi i s}+
\beta\tau|^2}.
\end{equation}
Recall that $\beta=br^{1-2s},$ and $y(\beta)$ has the expansion
in~\eqref{eqn4.15.004}, so, as $r\to\infty,$ the residue term
in~\eqref{eqn4.11} behaves like
\begin{equation}\label{eqn4.18.004}
  \frac{2\pi e^{-r^{2s}x_n/b}}{b},
\end{equation}
which decays at a slower exponential rate than the contribution from the
integral. To simplify the formulae that follow we let
$e^{-r|x_n|}I^{\pm}_s(r,x_n)$ denote the integral terms on the right hand sides
of~\eqref{eqn4.11}
and~\eqref{eqn4.12}, and  $H(r,x_n)$ denote the residue term
in~\eqref{eqn4.11} where $\beta=br^{1-2s}.$ With this notation we see that
\begin{equation}
  E_{s,b}(x',x_n)=\begin{cases}
&\frac{c_n\omega_{n-2}}{|x'|^{\frac{n-3}{2}}}
  \int\limits_{0}^{\infty}
J_{\frac{n-3}{2}}(r|x'|) \left[ e^{-r|x_n|} I^{+}_s(r,x_n)+H(r,x_n)\right]
 r^{\frac{n-1}{2}} dr\, \text{ if }x_n>0,\\
&\frac{c_n\omega_{n-2}}{|x'|^{\frac{n-3}{2}}}
  \int\limits_{0}^{\infty}
J_{\frac{n-3}{2}}(r|x'|) e^{-r|x_n|} I^{-}_s(r,x_n)
 r^{\frac{n-1}{2}} dr\,\text{ if }x_n<0.
\end{cases}
\end{equation}
This explains why, if $2s<1,$ $E_{s,b}(x',x_n)$ has a stronger singularity
approaching $(0,0)$ from $x_n>0$ than from $x_n<0.$ Where $x_n>0$ the principal
term in $E_{s,b}(x',x_n)$ is essentially a multiple of $p^0_s(x';x_n).$

\subsection{Asymptotics of the Integral Terms}

To understand the behavior of the integral terms when $r$ is small, we let
$\sigma=r(\tau-1),$ to obtain that
\begin{equation}
  I^{\pm}_s(r,x_n)=4\pi \int\limits_{0}^{\infty}\frac{\sin(\pi
    s)e^{-|x_n|\sigma}\sigma^s(\sigma+2r)^s d\sigma}
{|\sigma^s(\sigma+2r)^se^{\pi i s}
\mp b(\sigma+r)|^2}.
\end{equation}
From these expressions and the fact that $2s<1,$ it is immediate that the limits
\begin{equation}\label{4.21.004}
 C_s^{\pm}= \lim_{(r,x_n)\to (0^+,0^{\pm})}I^{\pm}_s(r,x_n)
\end{equation}
are finite. Hence from~\eqref{eqn4.18.004}, \eqref{4.21.004}, and the
asymptotic behavior of Bessel functions near zero  we see that, for any finite $r_0,$ the integrals
\begin{equation}
  \frac{c_n\omega_{n-2}}{|x'|^{\frac{n-3}{2}}}
  \int\limits_{0}^{r_0}
J_{\frac{n-3}{2}}(r|x'|) r^{\frac{n-1}{2}} e^{-r|x_n|} I^{\pm}_s(r,x_n)dr,
\quad \frac{c_n\omega_{n-2}}{|x'|^{\frac{n-3}{2}}}
  \int\limits_{0}^{r_0}
J_{\frac{n-3}{2}}(r|x'|) r^{\frac{n-1}{2}} H(r,x_n)dr  
\end{equation}
remain bounded as $(|x'|,x_n)$ tends to zero. 

In determining the asymptotics of $E_{s,b}(x',x_n)$ we are therefore free to
restrict attention in~\eqref{eqn4.5} to $0<<r_0<r.$ In this regime we 
rewrite $I^{\pm}_s$ as
\begin{equation}
  I^{\pm}_s(r,x_n)= 4\pi \int\limits_{r}^{\infty}\frac{\sin(\pi
    s)e^{-|x_n|(\tau-r)}(\tau^2-r^2)^s d\tau}
{|(\tau^2-r^2)^se^{\pi i s}\mp b\tau|^2}.
\end{equation}
Expanding these integrands in powers of
\begin{equation}
  \frac{(\tau^2-r^2)^s}{b\tau},
\end{equation}
gives the convergent expansions:
\begin{equation}
  \frac{(\tau^2-r^2)^s }
{|(\tau^2-r^2)^se^{\pi i s}\mp b\tau|^2}=
\frac{(\tau^2-r^2)^s }
{(b\tau)^2}\left[1+\sum_{j=1}^{\infty}a_j^{\pm}\left( \frac{(\tau^2-r^2)^s}{b\tau}\right)^j\right],
\end{equation}
valid for sufficiently large $\tau.$
The functions $I_s^{\pm}(r,x_n),$ then have expansions of the form
\begin{equation}
  I_s^{\pm}(r,x_n)\sim \sum_{j=0}^{\infty}a_j^{\pm}\frac{f_j(r|x_n|)}{r^{(1-2s)(j+1)}},
\end{equation}
where
\begin{equation}
  f_j(y)=\int\limits_{1}^{\infty}e^{-y(\sigma-1)}\left[\frac{(\sigma^2-1)^s}{\sigma}\right]^{j+1}\frac{d\sigma}{\sigma}.
\end{equation}

Using a standard remainder estimate for a geometric series, it follows easily
that for given $0<b$ and $0<s<\frac 12$ there is an $r_0$ so that, for each $N$ there is a $C_N,$ for which
\begin{equation}\label{eqn4.24.002}
  \left|I_s^{\pm}(r,x_n)- \sum_{j=0}^{N}a_j^{\pm}\frac{f_j(r|x_n|)}{r^{(1-2s)(j+1)}}\right|
\leq\frac{C_N}{r^{(1-2s)(N+2)}},\text{ if }r_0\leq r.
\end{equation}
The functions $\{f_j\}$ are non-negative, and $f_j(0)$ is  positive. As
$y\to\infty,$ $f_j(y)$ behaves like a classical symbol.
\begin{lem}\label{fjsymbest}
  For each $j\in\NN_0$ the function $f_j(y)$ is in $\cC^{\infty}((0,\infty))$ and for
  each $k\in \NN_0$ there is a constant $C_{j,k}$ so that, 
  \begin{equation}
    |\partial_y^kf_j(y)|\leq\frac{C_{k,j}}{y^{k+1+s(j+1)}}\text{ for }y\in [1,\infty).
  \end{equation}
\end{lem}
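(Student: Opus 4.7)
The plan is to isolate the $y$-dependence through the substitution $t=y(\sigma-1)$, which converts the exponentially decaying tail into a standard $e^{-t}$ factor and exposes the correct scaling in $y$. Because $(\sigma^2-1)^s/\sigma = (\sigma-1)^s(\sigma+1)^s/\sigma$, the $\sigma\to 1^+$ singularity is integrable (as $s>0$), and the $e^{-y(\sigma-1)}$ factor guarantees integrability at $\infty$ for any $y>0$. Standard dominated convergence arguments with a locally uniform dominant (on compact subsets of $(0,\infty)$) justify differentiation under the integral to any order, yielding smoothness on $(0,\infty)$ and the formula
\begin{equation*}
\partial_y^k f_j(y)=(-1)^k\int_1^\infty (\sigma-1)^k e^{-y(\sigma-1)}\left[\frac{(\sigma^2-1)^s}{\sigma}\right]^{j+1}\frac{d\sigma}{\sigma}.
\end{equation*}

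Next, apply the substitution $t=y(\sigma-1),$ so that $\sigma=1+t/y,$ $\sigma-1=t/y,$ $\sigma+1=2+t/y,$ and $d\sigma=dt/y.$ Using $(\sigma^2-1)^s=(t/y)^s(2+t/y)^s,$ the above integral becomes
\begin{equation*}
\partial_y^k f_j(y)=\frac{(-1)^k}{y^{\,k+1+s(j+1)}}\int_0^\infty t^{\,k+s(j+1)}e^{-t}\,\frac{(2+t/y)^{s(j+1)}}{(1+t/y)^{j+2}}\,dt.
\end{equation*}
This displays the factor $y^{-(k+1+s(j+1))}$ explicitly, so the proof reduces to uniformly bounding the remaining integral for $y\geq 1.$

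For $y\geq 1$ one has $t/y\leq t,$ hence $(2+t/y)^{s(j+1)}\leq (2+t)^{s(j+1)}$ while $(1+t/y)^{j+2}\geq 1,$ so the integrand is dominated by $t^{k+s(j+1)}e^{-t}(2+t)^{s(j+1)},$ whose integral over $[0,\infty)$ is a finite constant depending only on $k,j,s.$ Calling this constant $C_{k,j}$ yields the desired bound
\begin{equation*}
|\partial_y^k f_j(y)|\leq\frac{C_{k,j}}{y^{\,k+1+s(j+1)}},\quad y\in[1,\infty).
\end{equation*}
No serious obstacle arises: the argument is a one-line substitution followed by the elementary monotonicity estimates above. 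The only points requiring care are (i) verifying that the integrand and its $y$-derivatives are uniformly dominated on compact subsets of $(0,\infty)$ to legitimize differentiation under the integral sign (which follows immediately from the exponential factor and $s>0$), and (ii) ensuring $y\geq 1$ so that $t/y\leq t$—this is exactly the hypothesis of the lemma.
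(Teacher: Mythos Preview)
Your proof is correct and follows essentially the same approach as the paper: the paper first shifts $\tau=\sigma-1$ and then rescales $w=y\tau$, which is exactly your single substitution $t=y(\sigma-1)$, arriving at the identical integral representation with the prefactor $y^{-(k+1+s(j+1))}$. The only cosmetic difference is that the paper invokes dominated convergence to show the remaining integral has a finite limit as $y\to\infty$, whereas you bound it directly via the monotonicity $(2+t/y)^{s(j+1)}\le(2+t)^{s(j+1)}$ and $(1+t/y)^{j+2}\ge 1$ for $y\ge 1$; both arguments are equally valid.
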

\begin{proof}
Letting $\tau=\sigma-1,$ we can rewrite $f_j$ as
  \begin{equation}\label{eqn4.30.004}
    f_j(y)=\int\limits_{0}^{\infty}e^{-y\tau}\left[\frac{\tau^s(\tau+2)^s}{\tau+1}\right]^{j+1}
\frac{d\tau}{\tau+1}.
  \end{equation}
We can differentiate under the integral sign to obtain
\begin{equation}
 \partial_y^k f_j(y)=(-1)^k\int\limits_{0}^{\infty}e^{-y\tau}\tau^k
\left[\frac{\tau^s(\tau+2)^s}{\tau+1}\right]^{j+1}
\frac{d\tau}{\tau+1}.
\end{equation}
If we let $y\tau=w,$ then we see that
\begin{equation}
 \partial_y^k f_j(y)=\frac{(-1)^k}{y^{k+1+s(j+1)}}\int\limits_{0}^{\infty}e^{-w}w^{k+s(j+1)}
\left[\frac{(2+w/y)^s}{1+w/y}\right]^{j+1}
\frac{dw}{1+w/y}.
\end{equation}
The Lebesgue dominated convergence theorem easily implies that
\begin{equation}
  \lim_{y\to\infty} y^{k+1+s(j+1)}\partial_y^k f_j(y)=(-1)^k2^{s{j+1}}
\int\limits_{0}^{\infty}e^{-w}w^{k+s(j+1)}dw<\infty,
\end{equation}
from which the lemma follows easily.  
\end{proof}

We also have the following result describing the behavior of $f_j(y)$ as
$y\to 0^+.$
\begin{lem}\label{lem4.1.002}
  For $j\geq 0,$ if $(j+1)s\notin\NN,$ then there are functions
  $f_{j0}(y), f_{j1}(y)\in\cC^{\infty}[0,\infty)$ so that
  \begin{equation}\label{eqn4.30.004}
f_j(y)=f_{j1}(y)+f_{j0}(y)y^{(1-2s)(j+1)}.
\end{equation}
\end{lem}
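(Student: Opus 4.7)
The plan is to analyze
\[
f_j(y)=\int_0^\infty e^{-y\tau}g(\tau)\,d\tau,\qquad g(\tau):=\frac{\tau^{(j+1)s}(\tau+2)^{(j+1)s}}{(\tau+1)^{j+2}},
\]
by separating the $\tau$-integral into a compactly supported piece, which is automatically smooth in $y$ at $y=0$, and a tail whose behavior near $y=0$ can be read off via the Laplace rescaling $u=y\tau$. Fix a smooth cutoff $\chi\in \cC_c^\infty[0,\infty)$ with $\chi\equiv 1$ on $[0,1]$ and $\chi\equiv 0$ on $[2,\infty)$, and write $f_j=f_j^c+f_j^a$ accordingly. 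Because $\chi g$ is compactly supported with the integrable singularity $\tau^{(j+1)s}$ at $\tau=0$, the bound $|\partial_y^k(e^{-y\tau}\chi(\tau)g(\tau))|\leq \tau^k\chi(\tau)g(\tau)$ gives $f_j^c\in \cC^\infty[0,\infty)$ by differentiation under the integral, and this piece is absorbed into $f_{j1}$.

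The singular behavior comes entirely from $f_j^a$. The key algebraic identity is that for all $\tau>0$,
\[
g(\tau)=\tau^{(2s-1)(j+1)-1}H(\tau^{-1}),\qquad H(v):=(1+2v)^{(j+1)s}(1+v)^{-(j+2)},
\]
where $H$ is holomorphic on $|v|<\tfrac12$, with $H(0)=1$ and $H(v)=\sum_{k\geq 0}c_kv^k$ convergent there. I would truncate $H(v)=\sum_{k=0}^{N-1}c_kv^k+R_N(v)$ with $R_N(v)=O(v^N)$, and treat each Taylor term separately. Substituting $u=y\tau$ in the $k$-th term gives
\begin{equation*}
\int_0^\infty e^{-y\tau}(1-\chi(\tau))\tau^{(2s-1)(j+1)-1-k}\,d\tau = y^{(1-2s)(j+1)+k}\int_0^\infty e^{-u}u^{(2s-1)(j+1)-1-k}(1-\chi(u/y))\,du,
\end{equation*}
which naturally exposes the prefactor $y^{(1-2s)(j+1)}$. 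Split the $u$-integral at $u=2y$: the outer piece is the incomplete Gamma function $\Gamma((2s-1)(j+1)-k,2y)$, which under the non-resonance hypothesis expands via the classical series $\gamma(\alpha,x)=x^\alpha\sum_{n\geq 0}(-x)^n/(n!(\alpha+n))$ as a constant $\Gamma((2s-1)(j+1)-k)$ plus $y^{(2s-1)(j+1)-k}$ times an entire function of $y$. The inner piece on $[y,2y]$, after rescaling $u=yv$, similarly becomes $y^{(2s-1)(j+1)-k}$ times a smooth function. Multiplying through by the $y^{(1-2s)(j+1)+k}$ prefactor, each Taylor term contributes $A_k(y)+y^{(1-2s)(j+1)}\cdot y^kB_k(y)$ with $A_k,B_k\in \cC^\infty[0,\infty)$; reassembling over $k$ produces the desired decomposition modulo the $R_N$-remainder.

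For the remainder $\int_0^\infty e^{-y\tau}(1-\chi(\tau))\tau^{(2s-1)(j+1)-1}R_N(\tau^{-1})\,d\tau$, the pointwise bound $|R_N(\tau^{-1})|\leq C\tau^{-N}$ makes the integrand and its first $N-2$ derivatives in $y$ absolutely integrable on $[1,\infty)$, yielding $\cC^{N-1}$ smoothness at $y=0$. Since $N$ is arbitrary and the coefficients $c_k$ decay geometrically from the radius of convergence of $H$, while $|\Gamma((2s-1)(j+1)-k)|$ decays like $1/k!$ by the functional equation $\Gamma(z-k)=\Gamma(z)/[(z-1)\cdots(z-k)]$, the series defining $f_{j0}$ and $f_{j1}$ converge to genuinely smooth functions on $[0,\infty)$.

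The main obstacle is the non-resonance step: the expansion of $\Gamma((2s-1)(j+1)-k,2y)$ above requires $(2s-1)(j+1)-k$ not to be a non-positive integer for any $k\geq 0$, since otherwise a pole of $\gamma(\alpha,2y)$ collides with a pole of $\Gamma(\alpha)$ and generates $\log y$ contributions incompatible with the claimed form. The hypothesis $(j+1)s\notin\NN$ is what excludes the principal resonances; any residual collisions (at half-integer values of $(j+1)s$) would have to be handled either by showing that the corresponding Taylor coefficient $c_k$ of $H$ vanishes at the colliding exponent or by absorbing the would-be singular contributions into the smooth parts via a direct cancellation in the product $(1+2v)^{(j+1)s}(1+v)^{-(j+2)}$, and verifying this bookkeeping is the delicate point of the argument.
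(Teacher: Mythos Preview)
Your approach is essentially the same as the paper's: split $f_j$ into a compactly supported piece (smooth at $y=0$) and a tail, expand the tail integrand as $\tau^{(2s-1)(j+1)-1}$ times a convergent series in $\tau^{-1}$, and reduce to the elementary Laplace-type integrals $\int_{r_0}^{\infty}e^{-y\tau}\tau^{-\alpha-l}\,d\tau$. The only cosmetic difference is that you evaluate these via the rescaling $u=y\tau$ and the incomplete Gamma expansion, whereas the paper integrates by parts $l$ times and then identifies $\int_{r_0}^{\infty}e^{-y\tau}\tau^{-\alpha}\,d\tau=c_{\alpha}y^{\alpha-1}+\text{(smooth)}$; these are two packagings of the same computation.

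Your ``main obstacle'' is a genuine observation, and you have diagnosed it correctly. Both arguments actually need $(1-2s)(j+1)\notin\ZZ$, equivalently $2(j+1)s\notin\ZZ$, which is strictly stronger than the stated hypothesis $(j+1)s\notin\NN$; the half-integer values of $(j+1)s$ are exactly the ``residual collisions'' you worry about, and they do in principle produce $\log y$ terms. The paper does not resolve this through the lemma hypothesis at all: immediately after the lemma statement it imposes the standing assumption $s\notin\QQ$ for the remainder of the section, which kills all resonances at once. So you should not try to squeeze the result out of $(j+1)s\notin\NN$ alone --- just invoke the irrationality of $s$ and your argument goes through cleanly.
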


\begin{rmk} Throughout the remainder of this section we assume that
  $s\notin\QQ.$ This is not an essential restriction, but allows us to avoid
  considering many special cases.
\end{rmk}

\begin{proof}[Proof of Lemma \ref{lem4.1.002}] 
  We use formula~\eqref{eqn4.30.004} for $f_j(y).$ It is clear that the
  function is smooth for $y\in (0,\infty)$ and that the possible singularities
  as $y\to 0^+$ only result from the large $\tau$ behavior of the
  integrand. Thus it suffices to show that
\begin{equation}
    f^+_j(y)=\int\limits_{10}^{\infty}e^{-y\tau}\left[\frac{\tau^s(\tau+2)^s}{\tau+1}\right]^{j+1}\frac{d\tau}{\tau+1}
\end{equation}
has the behavior given in~\eqref{eqn4.30.004}. To that end we rewrite this
integral in the form
\begin{equation}
\begin{split}
  f^+_j(y)&=\int\limits_{10}^{\infty}e^{-y\tau}\tau^{(2s-1)(j+1)}\left[\frac{(1+2/\tau)^s}{1+1/\tau}\right]^{j+1}\frac{d\tau}{\tau(1+1/\tau)}\\
&=\sum_{k=1}^{\infty}a_k\int\limits_{10}^{\infty}e^{-y\tau}\tau^{(2s-1)(j+1)}\tau^{-k}d\tau.
\end{split}
\end{equation}
To prove the lemma it therefore suffices to examine functions of the form
\begin{equation}
  \int\limits_{10}^{\infty}e^{-y\tau}\tau^{-\alpha-l}d\tau,
\end{equation}
where $0<\alpha<1,$ and $l\in\NN.$ Repeatedly integrating by parts we can show
that there is a constant $c_{l,\alpha}$ so that
\begin{equation}
  \int\limits_{10}^{\infty}e^{-y\tau}\tau^{-\alpha-l}d\tau=c_{l,\alpha}y^{l+\alpha-1}+g_{l,\alpha}(y),
\end{equation}
where $g_{l,\alpha}\in\cC^{\infty}([0,\infty)).$ Using this relation
repeatedly, with $l+\alpha=(1-2s)(j+1)+k,$ and noting that $k\geq 1,$ we easily obtain
the assertion of the lemma.
\end{proof}

We now derive the leading order asymptotics of $E_{s,b}(x)$ of $x$ as $x\to 0.$
We begin by obtaining the asymptotics for the contributions of the
$I^{\pm}_s$-terms. As noted above it suffices to consider the integrals
\begin{equation}
  \frac{1}{|x'|^{\frac{n-3}{2}}}\int\limits_{r_0}^{\infty}J_{\frac{n-3}{2}}(r|x'|)
  r^{\frac{n-1}{2}} e^{-r|x_n|} I^{\pm}_s(r,x_n)dr\text{ for }0<\!<r_0.
\end{equation}
We split this analysis into the two cases: $|x'|/|x_n|\to 0$ and
$\infty.$

\begin{prop}\label{prop4.4.004}
  For $0<b$ and $0<s<\frac 12,$ there are functions $L^{\pm}_{s,b}(y)\in\cC^{\infty}([0,\infty)),$ so
  that, as $|x_n|+|x'|$ tends to zero, with $|x'|/|x_n|$ bounded above,
  \begin{equation}\label{eqn4.33.002}
  \frac{1}{|x'|^{\frac{n-3}{2}}}\int\limits_{0}^{\infty}J_{\frac{n-3}{2}}(r|x'|)
  r^{\frac{n-1}{2}} 
e^{-r|x_n|} I^{\pm}_s(r,x_n)dr
=\frac{L^{\pm}_{s,b}\left(\frac{|x'|}{|x_n|}\right)}
{(x_n^2+|x'|^2)^{\frac{n-2+2s}{2}}}+O\left(\frac{1}{(|x_n|+|x'|)^{n-3+4s}}\right).
  \end{equation}
\end{prop}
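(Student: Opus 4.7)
The approach is to isolate the $j=0$ term in the asymptotic expansion \eqref{eqn4.24.002} for $I^{\pm}_s(r,x_n)$ and then perform a scaling change of variable to extract the claimed homogeneity in $(x',x_n)$. First I would split the radial integration as $\int_0^{\infty}=\int_0^{r_0}+\int_{r_0}^{\infty}$. The first integral is already known, from the discussion preceding \eqref{4.21.004}, to remain bounded as $(x',x_n)\to 0$, so it is absorbed into the announced error $(|x'|+|x_n|)^{-(n-3+4s)}$, which diverges at the origin. On the tail $r\ge r_0$ apply \eqref{eqn4.24.002} with $N=0$ to write $I^{\pm}_s(r,x_n)=a_0^{\pm}f_0(r|x_n|)/r^{1-2s}+R^{\pm}(r,x_n)$ with $|R^{\pm}(r,x_n)|\le C/r^{2(1-2s)}$. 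The hypothesis that $|x'|/|x_n|$ is bounded above forces $|x_n|\asymp|x_n|+|x'|>0$, so the substitution $u=r|x_n|$ is legitimate; a direct estimate (using $|J_{(n-3)/2}(u\lambda)|\lesssim 1\wedge(u\lambda)^{(n-3)/2}$) shows that the contribution of $R^{\pm}$ to the full Bessel integral is $O(|x_n|^{-(n-3+4s)})$, matching the error in \eqref{eqn4.33.002}.

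For the principal contribution, the same substitution $u=r|x_n|$, combined with the identity $(n-1)/2-(1-2s)=(n-3)/2+2s$, converts the leading tail integral into
\begin{equation*}
\frac{1}{|x_n|^{n-2+2s}}\cdot\frac{a_0^{\pm}}{\lambda^{(n-3)/2}}\int_{r_0|x_n|}^{\infty}J_{(n-3)/2}(u\lambda)\,u^{(n-3)/2+2s}\,e^{-u}f_0(u)\,du,\quad\lambda:=\frac{|x'|}{|x_n|}.
\end{equation*}
Extending the lower limit down to $0$ introduces an error of order $|x_n|^{n-2+2s}$, which is small and absorbed. Convergence of the extended integral at $u=0$ uses $J_{(n-3)/2}(u\lambda)\sim c(u\lambda)^{(n-3)/2}$, producing the integrable factor $u^{n-3+2s}$; convergence at $u=\infty$ uses the exponential decay together with Lemma \ref{fjsymbest}. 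Define
\begin{equation*}
\widetilde L^{\pm}_s(\lambda):=\frac{a_0^{\pm}}{\lambda^{(n-3)/2}}\int_0^{\infty}J_{(n-3)/2}(u\lambda)\,u^{(n-3)/2+2s}\,e^{-u}f_0(u)\,du.
\end{equation*}
Smoothness of $\widetilde L^{\pm}_s$ on $[0,\infty)$ follows because $\lambda^{-(n-3)/2}J_{(n-3)/2}(u\lambda)$ is, via the entire even power series of the Bessel function, a smooth function of $\lambda$ (and of $u$), and differentiation under the integral sign is justified by the rapid decay of $e^{-u}f_0(u)$.

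Finally, rewriting $|x_n|^{-(n-2+2s)}=(1+\lambda^2)^{(n-2+2s)/2}(x_n^2+|x'|^2)^{-(n-2+2s)/2}$ and setting $L^{\pm}_{s,b}(\lambda):=(1+\lambda^2)^{(n-2+2s)/2}\widetilde L^{\pm}_s(\lambda)$ produces the form \eqref{eqn4.33.002} (after restoring any ambient constants $c_n\omega_{n-2}$ from the representation of $E_{s,b}$). The chief obstacle is maintaining uniformity in $\lambda$ and $|x_n|$ in the several error estimates; in particular, one must verify that the remainder $R^{\pm}$, once integrated against the oscillatory Bessel weight $J_{(n-3)/2}(r|x'|)$ and then divided by $|x'|^{(n-3)/2}$, does not spawn a new $\lambda^{-(n-3)/2}$ singularity as $\lambda\to 0^+$; this is ensured by the fact that the small-$u$ behavior of $|J_{(n-3)/2}(u\lambda)|$ furnishes exactly the compensating factor $\lambda^{(n-3)/2}$.
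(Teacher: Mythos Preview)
Your approach is correct and follows the same overall strategy as the paper: split off $[0,r_0]$, invoke the expansion~\eqref{eqn4.24.002}, and rescale by $u=r|x_n|$ to extract the homogeneity, using the small-argument behaviour of $J_{(n-3)/2}$ to verify smoothness of the resulting function of $\lambda=|x'|/|x_n|$.

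The one substantive difference is the choice of truncation level in~\eqref{eqn4.24.002}. You take $N=0$, keep only the $j=0$ term, and estimate the remainder $R^{\pm}$ directly by $C/r^{2(1-2s)}$; after the crude Bessel bound and the change of variable this produces exactly the stated error $O\bigl((|x_n|+|x'|)^{-(n-3+4s)}\bigr)$. The paper instead chooses $N$ so that $n-2-(1-2s)(N+2)<-1$, which forces the remainder integral to be $O(1)$; the announced error term then arises as the contribution of the $j=1$ term in the finite sum, and the higher-$j$ terms are analysed using both Lemma~\ref{fjsymbest} and Lemma~\ref{lem4.1.002}. Your route is a little more economical---you never need Lemma~\ref{lem4.1.002}, since for $j=0$ the boundedness of $f_0$ near $0$ is immediate---while the paper's route is set up to yield the full asymptotic expansion rather than just the leading term. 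Both are valid; your computation of the exponent $(n-1)/2-(1-2s)=(n-3)/2+2s$ and the resulting $|x_n|^{-(n-2+2s)}$ prefactor is correct, and your justification of differentiation under the integral (via the entire power series of $\lambda^{-(n-3)/2}J_{(n-3)/2}(u\lambda)$ and the exponential decay $e^{-u}$) is adequate for the smoothness claim on $L^{\pm}_{s,b}$.
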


\begin{proof}
We choose an $r_0$ so that we can apply the remainder estimate
in~\eqref{eqn4.24.002}. If we denote the quantity on the left
of~\eqref{eqn4.33.002} by $\cI_{s}^{\pm}(|x'|,x_n),$ then it follows that, for
each $N$ there is a $C_N$ so that
\begin{multline}\label{eqn4.34.002}
  \cI_{s}^{\pm}(|x'|,x_n)=\sum_{j=0}^{N}\frac{a_j^{\pm}}{|x'|^{\frac{n-3}{2}}}\int\limits_{r_0}^{\infty}J_{\frac{n-3}{2}}(r|x'|)
  r^{\frac{n-1}{2}} e^{-r|x_n|} \frac{f_j(r|x_n|)dr}{r^{(1-2s)(1+j)}}+\\
\frac{C_N}{|x'|^{\frac{n-3}{2}}}\int\limits_{r_0}^{\infty}J_{\frac{n-3}{2}}(r|x'|)
  r^{\frac{n-1}{2}} e^{-r|x_n|} \frac{dr}{r^{(1-2s)(N+2)}}+O(1).
\end{multline}
We choose $N$ so that $n-2-(1-2s)(1+N)>-1$ and $n-2-(1-2s)(2+N)<-1.$ 
(Here we use our standing assumption that $s\notin\QQ.$  We leave
the equality case to the interested reader.) To estimate the remainder we use
the very crude estimate
\begin{equation}
  |J_{\frac{n-3}{2}}(r|x'|)|\leq C(r|x'|)^{\frac{n-3}{2}}.
\end{equation}
This estimate, along with our choice of $N$ shows that, without regard for the
behavior of the ratio $|x'|/|x_n|,$ the remainder term is $O(1)$
as $|x'|+|x_n|$ tends to zero.

To estimate the terms in the finite sum we change variables letting
$r|x_n|=\rho$ to obtain that
\begin{equation}
   \cI_{s}^{\pm}(|x'|,x_n)=\sum_{j=0}^{N}\frac{a_j^{\pm}|x_n|^{(1-2s)(1+j)}}{|x'|^{\frac{n-3}{2}}|x_n|^{\frac{n+1}{2}}}
\int\limits_{r_0|x_n|}^{\infty}J_{\frac{n-3}{2}}(\rho|x'|/|x_n|)
  \rho^{\frac{n-1}{2}} e^{-\rho} \frac{f_j(\rho)d\rho}{\rho^{(1-2s)(1+j)}}+O(1).
\end{equation}
Recalling that as $|x'|/|x_n|$ is bounded or tending to zero, these integrals
are easily evaluated by replacing the Bessel function with its power
series. Using the estimates for  $f_j$ that follow from
Lemmas~\eqref{fjsymbest} and~\eqref{lem4.1.002}, we obtain the formula
in~\eqref{eqn4.33.002}. Note that only the $j=0$ term contributes to the
leading order behavior.
\end{proof}

The analysis of the case with $|x'|/|x_n|$ tending to infinity is rather different. Nonetheless
we obtain the same leading order behavior in this limit as well.
  \begin{prop}
  For $0<b$ and $0<s<\frac 12,$ there are functions $K^{\pm}_{s,b}(y)\in\cC^{\infty}([0,\infty)),$ so
  that, as $|x_n|+|x'|$ tends to zero, with $|x_n|/|x'|$ bounded above,
  \begin{equation}\label{eqn4.38.002}
  \frac{1}{|x'|^{\frac{n-3}{2}}}\int\limits_{0}^{\infty}J_{\frac{n-3}{2}}(r|x'|)
  r^{\frac{n-1}{2}} 
e^{-r|x_n|} I^{\pm}_s(r,x_n)dr
=\frac{K^{\pm}_{s,b}\left(\frac{|x_n|}{|x'|}\right)}
{(x_n^2+|x'|^2)^{\frac{n-2+2s}{2}}}+O\left(\frac{1}{(|x_n|+|x'|)^{n-3+4s}}\right).
  \end{equation}
\end{prop}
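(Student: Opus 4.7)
The plan is to follow the strategy of Proposition~\ref{prop4.4.004}, with the roles of $|x_n|$ and $|x'|$ interchanged: since $|x'|$ now sets the dominant length scale, the appropriate substitution is $\rho=r|x'|$ rather than $\rho=r|x_n|$. As in the previous argument, I would first restrict to $r\geq r_0$, observing that the $r\in[0,r_0]$ portion is uniformly $O(1)$ and absorbable into the error. On this range I would insert the asymptotic expansion~\eqref{eqn4.24.002} for $I_s^\pm(r,x_n)$ up to order $N$. After substituting $\rho=r|x'|$ and setting $y=|x_n|/|x'|$, the $j$-th term contributes
$$
a_j^\pm\, |x'|^{-[(n-1)-(1-2s)(j+1)]}\int_{r_0|x'|}^{\infty}J_{(n-3)/2}(\rho)\,\rho^{(n-1)/2-(1-2s)(j+1)}e^{-\rho y}f_j(\rho y)\,d\rho .
$$

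The $j=0$ term scales as $|x'|^{-(n-2+2s)}$; the terms with $j\geq 1$, together with the remainder $R_N$ (valid for any $N\geq 0$), scale as $|x'|^{-(n-3+4s)}$ or better. Since $y$ is bounded, $|x'|\asymp |x_n|+|x'|$, so the subleading contributions fit inside the claimed error $O((|x_n|+|x'|)^{-(n-3+4s)})$. Extending the lower limit of the $j=0$ integral from $r_0|x'|$ down to $0$ at the cost of an $O(1)$ term and using the identity $|x'|^2(1+y^2)=x_n^2+|x'|^2$, I am led to set
$$
K^\pm_{s,b}(y):=a_0^\pm(1+y^2)^{(n-2+2s)/2}\int_0^{\infty}J_{(n-3)/2}(\rho)\,\rho^{(n-3)/2+2s}e^{-\rho y}f_0(\rho y)\,d\rho,
$$
so that the leading contribution equals $K^\pm_{s,b}(y)/(x_n^2+|x'|^2)^{(n-2+2s)/2}$, as required.

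The principal obstacle is verifying that $K^\pm_{s,b}\in\cC^{\infty}([0,\infty))$. For $y>0$ the exponential factor secures absolute convergence and legitimates differentiation under the integral sign, using the symbol estimates of Lemma~\ref{fjsymbest} for large $\rho y$ and the integrability of $\rho^{(n-3)/2+2s}$ near $\rho=0$. At $y=0$ the exponential loses its grip and one must exploit the oscillation of $J_{(n-3)/2}(\rho)$ together with the decay $f_0(\rho y)=O((\rho y)^{-1-s})$ from Lemma~\ref{fjsymbest}. Lemma~\ref{lem4.1.002} provides the local expansion $f_0(u)=f_{0,1}(u)+f_{0,0}(u)\,u^{1-2s}$ near $u=0$, which at first glance produces a non-smooth $y^{1-2s}$ prefactor in $K^\pm_{s,b}$. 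The delicate point is that this apparent singular piece lives at precisely the scale $|x'|^{-(n-3+4s)}$ occupied by the $j=1$ contribution, and must combine with it to yield a smooth function of $y$.

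The cleanest route I see is to evaluate the pieces by the Weber--Schafheitlin/Laplace--Bessel formula, expressing them through the Gauss hypergeometric function ${}_2F_1(a,b;c;-1/y^2)$ and applying its standard asymptotic expansion as $y\to 0^+$ to cancel the apparent fractional-power prefactors and verify smoothness on $[0,\infty)$. Modulo this technical verification of smoothness at $y=0$ (which is the main obstacle), the remaining elements of the proof are parallel to those of Proposition~\ref{prop4.4.004}: the low-$r$ piece is handled by the uniform estimate used there, and the term-by-term scaling in $|x'|$ follows directly from the remainder bound~\eqref{eqn4.24.002}.
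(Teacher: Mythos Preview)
Your approach differs from the paper's in a substantive way, and the gap you identify at $y=0$ is real---but your proposed resolution of it does not quite work.

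The paper does \emph{not} rescale by $|x'|$. Instead it keeps the $|x_n|$-scaling of Proposition~\ref{prop4.4.004}, but returns to the full $(n-1)$-dimensional Fourier representation (the first line of~\eqref{eqn4.5.0002}) rather than staying in the one-dimensional Bessel form. After setting $\xi'=|x_n|\eta'$, the $j$th term becomes
\[
\frac{a_j^{\pm}|x_n|^{(1-2s)j}}{|x_n|^{n-2+2s}}
\int_{\RR^{n-1}}
e^{i(|x'|/|x_n|)\omega\cdot\xi'}\,e^{-|\xi'|}\,
\frac{f_j(|\xi'|)}{|\xi'|^{(1-2s)(1+j)}}\,d\xi'.
\]
Since $f_j$ is symbolic at infinity (Lemma~\ref{fjsymbest}), one may insert a cutoff $\varphi(|\xi'|^2)$ supported near the origin at the cost of $O\bigl((|x_n|/|x'|)^N\bigr)$. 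Lemma~\ref{lem4.1.002} then splits $f_j$ into smooth pieces, leaving a compactly supported conormal distribution whose oscillatory Fourier integral is evaluated by a standard stationary-phase argument as $|x'|/|x_n|\to\infty$. This produces an expansion in integer powers of $|x_n|/|x'|$ and gives the smoothness of $K^\pm_{s,b}$ directly.

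Your Bessel-integral route forces you to confront the smoothness of
\[
y\mapsto \int_0^{\infty}J_{(n-3)/2}(\rho)\,\rho^{(n-3)/2+2s}\,e^{-\rho y}f_0(\rho y)\,d\rho
\]
at $y=0$ by hand. Here there is an error in your reasoning: the $y^{1-2s}$ piece that arises from the $f_{0,0}$ part of Lemma~\ref{lem4.1.002} is \emph{not} at the scale $|x'|^{-(n-3+4s)}$ of the $j=1$ term. It sits at the leading scale $|x'|^{-(n-2+2s)}$ (simply because $y^{1-2s}$ is bounded when $y$ is bounded), so it cannot be absorbed into the error and cannot ``combine'' with the $j=1$ contribution as you suggest. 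For your $K^\pm_{s,b}$ to be smooth, the coefficients of the non-integer powers $y^{1-2s+k}$ must vanish individually. Some do---the Weber--Schafheitlin formula gives $\int_0^{\infty}J_{(n-3)/2}(\rho)\rho^{(n-1)/2}\,d\rho=0$ via a $\Gamma(0)^{-1}$ factor---but others need not, and chasing these cancellations term by term through hypergeometric asymptotics is considerably more delicate than you indicate. The paper's stationary-phase framework packages all of this at once: the conormal structure of the integrand at $\xi'=0$ determines the large-$\lambda$ expansion, and the smoothness in $1/\lambda=|x_n|/|x'|$ is a direct consequence of that standard machinery.
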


\begin{proof}
  We begin, as before with the formula in~\eqref{eqn4.34.002}, where $N$ is
  chosen as before in the proof of Proposition~\ref{prop4.4.004}. The remainder
  term is still $O(1)$ as $|x'|+|x_n|$ tends to zero. To estimate the
  integrals, we return to the Fourier representation in the first line
  of~\eqref{eqn4.5.0002}, obtaining:
  \begin{equation}
     \cI_{s}^{\pm}(x_n,|x'|)=\sum_{j=0}^{N}a_j^{\pm}\int\limits_{r_0<|\eta'|}
e^{i|x'|\omega\cdot\eta'}e^{-|\eta'||x_n|}
     \frac{f_j(|\eta'||x_n|)d\eta'}{|\eta'|^{(1-2s)(1+j)}}+ O(1),
  \end{equation}
where $x'=|x'|\omega.$ We now let $|x_n|\eta'=\xi'$ to get
\begin{equation}
     \cI_{s}^{\pm}(x_n,|x'|)=\sum_{j=0}^{N}\frac{a_j^{\pm}|x_n|^{(1-2s)j}}{|x_n|^{n-2+2s}}
\int\limits_{r_0|x_n|<|\xi'|}
e^{i(|x'|/|x_n|)\omega\cdot\xi'}e^{-|\xi'|}
     \frac{f_j(|\xi'|)d\xi'}{|\xi'|^{(1-2s)(1+j)}}+ O(1),
  \end{equation}
  As follows from Lemma~\ref{fjsymbest}, the integrand behaves symbolically as
  $|\xi'|$ tends to infinity. Hence it follows easily that if
  $\varphi\in\cC^{\infty}(\RR^{n-1})$ is $1$ in a small neighborhood of zero
  and $0$ where $|\xi'|>\frac 12,$ then, for any $N\in\NN,$
\begin{multline}
     \cI_{s}^{\pm}(x_n,|x'|)=\\
\sum_{j=0}^{N}\frac{a_j^{\pm}|x_n|^{(1-2s)j}}{|x_n|^{n-2+2s}}
\left[\int\limits_{r_0|x_n|<|\xi'|}\varphi(|\xi'|^2)
e^{i(|x'|/|x_n|)\omega\cdot\xi'}e^{-|\xi'|}
     \frac{f_j(|\xi'|)d\xi'}{|\xi'|^{(1-2s)(1+j)}}+O\left(\left[\frac{|x_n|}{|x'|}\right]^N\right)\right]+ O(1).
  \end{multline}
As before, we make a bounded error replacing the integral over $\{\xi':\:r_0|x_n|<|\xi'|\}$
with the integral over $\RR^{n-1}.$ 

Thus, applying Lemma~\ref{lem4.1.002}, we are left to consider
\begin{multline}
     \cI_{s}^{\pm}(x_n,|x'|)=
\sum_{j=0}^{N}\frac{a_j^{\pm}|x_n|^{(1-2s)j}}{|x_n|^{n-2+2s}}\\
\left[\int\limits_{\RR^{n-1}}\varphi(|\xi'|^2)
e^{i(|x'|/|x_n|)\omega\cdot\xi'}e^{-|\xi'|}
     \frac{\left[f_{j1}(|\xi'|)+f_{j0}(|\xi'|)|\xi'|^{(1-2s)(j+1)}\right]d\xi'}{|\xi'|^{(1-2s)(1+j)}}+
O\left(\left[\frac{|x_n|}{|x'|}\right]^N\right)\right]+ O(1).
  \end{multline}
  Recall that the functions $\{f_{j0}, f_{j1}\}$ are smooth in $[0,1).$ As we
  are considering the limit $|x'|/|x_n|\to\infty,$ the assertion of the
proposition now follows from a standard stationary phase argument.
\end{proof}
This gives the leading order asymptotic behavior of $E_{s,b}(x',x_n)$ in the
  half plane $x_n<0.$ 

\subsection{Asymptotics of the Residue Term}

To complete the analysis in the other half plane we need
  to assess the contribution of the residue term, i.e. the behavior of the integral
  \begin{equation}
    \cH(|x'|,x_n)=\frac{c_n\omega_{n-2}}{|x'|^{\frac{n-3}{2}}}
\int\limits_{r_0}^{\infty}J_{\frac{n-3}{2}}(r|x'|) r^{\frac{n-1}{2}} H(r,x_n)dr,
  \end{equation}
as $|x'|+x_n\to 0^+.$ For this case we have the result:
\begin{prop}\label{prop4.5}
  Suppose that $0<b$ and $0<s<\frac 12,$ and $x_n>0.$ There is a function
  $L_{s,b}(u,v)
\in\cC^{0}([0,\infty]\times [0,\infty]),$ so
  that, as $|x_n|+|x'|$ tends to zero, 
  \begin{equation}\label{eqn4.44.0001}
  \frac{c_n}{|x'|^{\frac{n-3}{2}}}\int\limits_{0}^{\infty}J_{\frac{n-3}{2}}(r|x'|)
  r^{\frac{n-1}{2}} 
H(r,x_n)dr
=\frac{L_{s,b}\left(\frac{x_n^{\frac 1s}}{|x'|^2},x_n\right)}
{(x_n^{\frac 1s}+|x'|^2)^{\frac{n-1}{2}}}\left[1+O\left((x_n^{\frac{1}{s}}+|x'|^2)^{1-2s}\right)\right].
  \end{equation}
The function $L_{s,b}\left(\frac{x_n^{\frac 1s}}{|x'|^2},x_n\right)$ has a finite limit
as $\frac{x_n^{\frac 1s}}{|x'|^2}$ tends to infinity and behaves like
$\frac{x_n^{\frac 1s}}{|x'|^2}$ as this quantity tends to zero.
\end{prop}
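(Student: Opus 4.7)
The strategy parallels the two preceding propositions but accommodates a new feature: the residue factor $H(r,x_n)$ decays like $e^{-r^{2s}x_n/b}$ at large $r$ rather than $e^{-rx_n}$, which places the relevant length scale at $x_n^{-1/(2s)}$ rather than at $x_n^{-1}.$ The plan is to introduce the composite scale $\lambda := (x_n^{1/s}+|x'|^2)^{1/2},$ which simultaneously controls the Bessel oscillation (at scale $1/|x'|$) and the exponential decay of $H,$ and then to substitute $r=\rho/\lambda$ in the defining integral. One checks the identity
\begin{equation*}
\left(\frac{|x'|}{\lambda}\right)^2 + \left(\frac{x_n}{\lambda^{2s}}\right)^{1/s} = 1,
\end{equation*}
so both the rescaled Bessel argument $\rho|x'|/\lambda$ and the rescaled exponential argument $\rho^{2s}x_n/\lambda^{2s}$ are controlled by $\rho$ and $\rho^{2s}$ alone. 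The Jacobian and the factor of $r^{(n-1)/2}$ produce the overall prefactor $\lambda^{-(n-1)}$ predicted in the proposition.

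With the rescaling in hand, the next step is to replace $H(\rho/\lambda,x_n)$ by its large-$\beta$ asymptotic form. As $\lambda\to 0,$ the variable $\beta = b(\rho/\lambda)^{1-2s}$ tends to infinity uniformly on compact $\rho$-subsets of $(0,\infty)$, and the large-$\beta$ expansion of $y(\beta)$ recorded earlier yields
\begin{equation*}
H(\rho/\lambda,x_n)=\frac{2\pi}{b}\,e^{-(x_n/\lambda^{2s})\rho^{2s}/b}\bigl[1+O(\lambda^{2(1-2s)})\bigr].
\end{equation*}
The relative error $O(\lambda^{2-4s})=O((x_n^{1/s}+|x'|^2)^{1-2s})$ is precisely the bracketed correction in the statement. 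Near $\rho=0$ the small-$\beta$ asymptotic $H\sim\mathrm{const}\cdot r^{(1-2s)/s}e^{-rx_n}$ combined with the Bessel factor $J_{(n-3)/2}(\rho u)\sim(\rho u)^{(n-3)/2}$ produces an integrand of order $\rho^{n-2+(1-2s)/s},$ which is integrable exactly because $s<\tfrac12$; the tail at $\rho\to\infty$ is controlled by the exponential decay. Dominated convergence applied to the rescaled integral then identifies the leading-order part as a continuous function $L_{s,b}$ of $(x_n^{1/s}/|x'|^2,\,x_n)$, with the second argument absorbing the smooth subleading dependence on $x_n.$

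The two endpoint behaviors of $L_{s,b}$ are read off from the rescaled integral in the limits $u:=|x'|/\lambda\to 0$ and $u\to 1.$ When $x_n^{1/s}/|x'|^2\to\infty$ (so $u\to 0$), the Bessel argument $\rho u$ is small, $J_{(n-3)/2}(\rho u)/(\rho u)^{(n-3)/2}\to c_n^{-1},$ and the integral reduces to a one-dimensional Laplace-type transform against $e^{-\rho^{2s}/b},$ giving a finite limit. When $x_n^{1/s}/|x'|^2\to 0$ (so $u\to 1$), the coefficient $x_n/\lambda^{2s}=(1-u^2)^s$ tends to zero, flattening the exponential; expanding the exponential to first order in this small parameter and using the Bessel identity produces the advertised behavior $\sim x_n^{1/s}/|x'|^2.$ Joint continuity of $L_{s,b}$ on the compactification $[0,\infty]\times[0,\infty]$ then follows from another dominated-convergence argument.

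The main technical obstacle is making the error $O(\lambda^{2-4s})$ uniform across the full range $\rho\in(r_0\lambda,\infty)$ and through the intermediate regime $\rho\sim 1,$ where neither the small-$\beta$ nor the large-$\beta$ expansion of $y(\beta)$ is directly applicable. This is handled by splitting the integration range into three regions (small $\rho$, intermediate $\rho$, large $\rho$) and using in each region the appropriate local description of $y(\beta)$: the small-$\beta$ expansion for $\rho$ of order at most $\lambda$-adjusted, the smoothness and boundedness of $y$ on any compact subset of $(0,\infty)$ in the intermediate regime, and the large-$\beta$ expansion elsewhere. Summing the three contributions and using the fact that the intermediate region contributes only a bounded multiplicative correction at the required order then yields the uniform estimate.
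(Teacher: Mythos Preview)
Your unified rescaling $r=\rho/\lambda$ with $\lambda^2=x_n^{1/s}+|x'|^2$ is an elegant device, and in the regime where $x_n^{1/s}/|x'|^2$ is bounded below it reduces to $w=x_n^{1/(2s)}r,$ which is exactly the paper's substitution there: insert the Taylor series of the Bessel function and read off the expansion.  In that half of the argument your outline and the paper's proof coincide.

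The genuine gap is in the opposite regime $u=|x'|/\lambda\to 1,$ i.e.\ $x_n^{1/s}/|x'|^2\to 0.$  After your rescaling the leading integrand is
\[
J_{\frac{n-3}{2}}(\rho u)\,\rho^{\frac{n-1}{2}}\,\frac{2\pi}{b}\,e^{-\epsilon\rho^{2s}},
\qquad \epsilon=\frac{x_n}{b\lambda^{2s}}\to 0,
\]
which is \emph{not} absolutely integrable at $\rho=\infty$ (it is $O(\rho^{(n-2)/2})$ times an oscillation).  Hence neither your dominated-convergence claim nor a termwise expansion of the exponential is available: the integrals $\int_0^\infty J_\nu(\rho)\rho^{\nu+1+2sk}\,d\rho$ that would arise are individually divergent, and no standard ``Bessel identity'' applies with exponent $\rho^{2s}.$  The small-$\epsilon$ behavior comes from a global cancellation and must be extracted by an oscillatory argument.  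The paper does not stay in the one-dimensional Bessel formulation at this point; it reverts to the $(n-1)$-dimensional Fourier integral, inserts a compactly supported cutoff $\varphi(|\xi'|^2),$ expands $e^{-|\xi'|^{2s}/b}$ on the now-compact region, and applies stationary phase in the large parameter $|x'|/x_n^{1/(2s)}.$  It is precisely this step that produces the vanishing coefficient $x_n/|x'|^{2s}$ and the stated error.  Your three-region split in $\rho$ faces the same obstruction: in the large-$\rho$ region with $\epsilon$ small there is no exponential damping, and the $O(\lambda^{2-4s})$ remainder in $H$ carries an additional factor $\rho^{6s-2}$ that cannot be controlled pointwise.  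The proposal thus handles the easy half correctly but, as written, does not supply the oscillatory analysis the hard half requires.
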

\begin{proof}
We first consider the behavior as $\frac{x_n^{\frac 1s}}{|x'|^2}$ tends to
infinity. Recall from~\eqref{eqn4.11} that
\begin{equation}
  H(r,x_n)=2\pi
  e^{-rx_ny(\beta)}\left(\frac{1-y^2(\beta)}{b(1-(1-2s)y^2(\beta))}\right),\text{
    where }\beta=br^{1-2s}.
\end{equation}
 For large $\beta,$ we rewrite 
\begin{equation}
  y(\beta)=\frac{r^{2s-1}}{b}(1+\ty(\beta)),
\end{equation}
where $\ty(\beta)=\sum_{j=1}^{\infty}\frac{b_j}{\beta^{2j}},$ so that
\begin{multline}
    \cH(|x'|,x_n)=\frac{c_n\omega_{n-2}}{b|x'|^{\frac{n-3}{2}}}\int\limits_{r_0}^{\infty}J_{\frac{n-3}{2}}(r|x'|)
    r^{\frac{n-1}{2}} e^{-\frac{r^{2s}x_n}{b}(1+\ty(br^{1-2s}))}\times\\
\left(\frac{b^2-r^{2(2s-1)}(1+\ty(br^{1-2s}))^2}{b^2-(1-2s)r^{2(2s-1)}(1+\ty(br^{1-2s}))^2}\right)dr.
  \end{multline}
As noted above the contribution from $0$ to $r_0$ is uniformly $O(1).$

Inserting the Taylor series for $J_{\frac{n-3}{2}}$ and 
letting $x_n^{\frac{1}{2s}}r=w,$ we obtain 
\begin{multline}
    \cH(|x'|,x_n)=\frac{c_n\omega_{n-2}}{bx_n^{\frac{n-1}{2s}}}
\sum_{j=0}^{\infty}a_j\left(\frac{|x'|^2}{x_n^{\frac 1s}}\right)^j\int\limits_{x_n^{\frac{1}{2s}}r_0}^{\infty}
    w^{n-2+2j} e^{-\frac{w^{2s}}{b}(1+Y(w))}\times\\
\left(\frac{b^2w^{2(1-2s)}-x_n^{\frac{1-2s}{s}}(1+Y(w))^2}{b^2w^{2(1-2s)}-(1-2s)x_n^{\frac{1-2s}{s}}(1+Y(w))^2}\right)dw,
  \end{multline}
where $Y(w)=\ty(b(w/x_n^{\frac{1}{2s}})^{1-2s}).$ From the behavior of
$y(\beta)$ it is not difficult to show that, so long as
$x_n^{\frac{1}{2s}}/|x'|$ is bounded away from zero, this takes the form
\begin{equation}
  \cH(|x'|,x_n)=\frac{h\left(\frac{|x'|^2}{x_n^{\frac{1}{s}}},x_n\right)} 
{(x_n^{\frac{1}{s}}+|x'|^2)^{\frac{n-1}{2}}},
\end{equation}
where $h(u,v)$ is a continuous function in $[0,\infty)\times [0,\infty),$ with
$h(u,0)$ equal to
\begin{equation}
   h(u,0)=
\sum_{j=0}^{\infty}a'_ju^j\int\limits_{0}^{\infty}
    w^{n-2+2j} e^{-w^{2s}}dw.
  \end{equation}

All that remains is to analyze the behavior of $\cH(|x'|,x_n)$ as
$x_n^{\frac{1}{2s}}/|x'|$ tends to zero. To that end we proceed as before,
rewriting $\cH$ as
\begin{equation}
    \cH(|x'|,x_n)=
\int\limits_{r_0<|\eta'|} e^{i|x'|\omega\cdot\eta'} H(|\eta'|,x_n)d\eta'.
  \end{equation}
We now let $x_n^{\frac{1}{2s}}\eta'=\xi'$ to obtain that 
\begin{equation}
  \cH(|x'|,x_n)=
\frac{1}{x_n^{\frac{n-1}{2s}}}
\int\limits_{x_n^{\frac{1}{2s}}r_0<|\xi'|} e^{i(|x'|/x_n^{\frac{1}{2s}})\omega\cdot\xi'} H(|\xi'|/x_n^{\frac{1}{2s}},x_n)d\xi' 
\end{equation} 
The analysis in this case is quite similar to that employed above. The
integrand involves the function $H(|\xi'|/x_n^{\frac{1}{2s}},x_n)$ where the
first argument is large. We therefore make use of~\eqref{eqn4.15.004} to obtain
the convergent expansion:
\begin{equation}
  \left(\frac{1-y^2(\beta)}{b(1-(1-2s)y^2(\beta))}\right)=\frac{1}{b}\left(1+\sum_{j=1}^{\infty}
\frac{\tb_j}{\beta^{2j}}\right).
\end{equation}
From which it follows easily that
\begin{equation}\label{eqn4.5.01}
  H\left(\frac{|\xi'|}{x_n^{\frac{1}{2s}}},x_n\right)=\frac{2\pi e^{-\frac{|\xi'|^{2s}}{b}(1+\ty(\tbeta))}}{b}
\left[1+\sum_{j=1}^{\infty}\frac{\tb_j}{\tbeta^{2j}}\right],
\end{equation}
where
\begin{equation}
  \tbeta=b\left(\frac{|\xi'|}{x_{n}^{\frac{1}{2s}}}\right)^{1-2s}.
\end{equation}
The principal contribution comes from
\begin{equation}
  \frac{2\pi
  }{bx_n^{\frac{n-1}{2s}}}
\int\limits_{x_n^{\frac{1}{2s}}r_0<|\xi'|}
e^{i\left[\frac{|x'|\omega\cdot\xi'}{x_n^{\frac{1}{2s}}}\right]}e^{-\frac{|\xi'|^{2s}}{b}(1+\ty(\tbeta))}d\xi'.
\end{equation}

From the symbolic properties of $\ty(\tbeta)$ it is not difficult to see that 
inserting a compactly supported function of the form $\varphi(|\xi'|^2),$ with
$\varphi(r)=1$ in a neighborhood of the origin, leads to an error that is
$$O\left(|x'|^{1-n}\left[\frac{x_n^{\frac{1}{2s}}}{|x'|}\right]^N\right),$$ 
for any $N,$ and therefore, up to error terms of this order, we set
 \begin{equation}
  \cH_{\pr}(|x'|,x_n)= \frac{2\pi
  }{bx_n^{\frac{n-1}{2s}}}
\int\limits_{x_n^{\frac{1}{2s}}r_0<|\xi'|} \varphi(|\xi'|^2)
e^{i\left[\frac{|x'|\omega\cdot\xi'}{x_n^{\frac{1}{2s}}}\right]}e^{-\frac{|\xi'|^{2s}}{b}(1+\ty(\tbeta))}d\xi'.
\end{equation}
Using a Taylor series we estimate the exponential to get 
\begin{equation}\label{eqn4.56.001}
  \cH_{\pr}(|x'|,x_n)= \frac{2\pi}{bx_n^{\frac{n-1}{2s}}}
\int\limits_{x_n^{\frac{1}{2s}}r_0<|\xi'|} \varphi(|\xi'|^2)
e^{i\left[\frac{|x'|\omega\cdot\xi'}{x_n^{\frac{1}{2s}}}\right]}\left[1-\frac{|\xi'|^{2s}}{b}+
O\left(|\xi'|^{4s}+|\xi'|^{2s}\left(\frac{x_n^{\frac{1}{2s}}}{|\xi'|}\right)^{2(1-2s)}\right)\right]d\xi'.
\end{equation}
Replacing this last expression with the integral over $\RR^{n-1}$ again
introduces a bounded error, thus
\begin{equation}
  \cH_{\pr}(|x'|,x_n)= -\frac{2\pi}{bx_n^{\frac{n-1}{2s}}}
\int\limits_{\RR^{n-1}} \varphi(|\xi'|^2)
e^{i\left[\frac{|x'|\omega\cdot\xi'}{x_n^{\frac{1}{2s}}}\right]}\left[\frac{|\xi'|^{2s}}{b}+
O\left(|\xi'|^{4s}+|\xi'|^{2s}\left(\frac{x_n^{\frac{1}{2s}}}{|\xi'|}\right)^{2(1-2s)}\right)\right]d\xi'.
\end{equation}
Here we have also taken account of the fact that the ``$1$'' term
in~\eqref{eqn4.56.001} contributes a term of order
\begin{equation}
  \frac{1}{|x'|^{n-1}}O\left(\left[\frac{x_n}{|x'|^{2s}}\right]^M\right),
\end{equation}
for any $M.$ Using a standard stationary phase
argument we see that
\begin{equation}
  \cH_{\pr}(|x'|,x_n)=
  \frac{c_{n,s}}{b|x'|^{n-1}}\left[\frac{x_n}{|x'|^{2s}}\right]\left(1+
O\left(\left[\frac{x_n}{|x'|^{2s}}\right]^2
+|x'|^{2(1-2s)}\right)\right).
\end{equation}

An analysis similar to that used above in~\eqref{eqn4.34.002} is used to
analyze the sum over $\tbeta^{-2j}$ in~\eqref{eqn4.5.01}. As before we divide
the sum in~\eqref{eqn4.5.01} into two parts. In the first part $n-2j(1-2s)>1,$
and in the later it is $n-2j(1-2s)<1.$ The contribution of second part remains
bounded as $|x'|$ and $x_n$ tend to zero and so can be ignored. Arguing as in
the estimation of $\cH_{\pr}(|x'|,x_n),$ we show that the difference
$\cH(|x'|,x_n)- \cH_{\pr}(|x'|,x_n)$ is estimated by terms of the type in the
$O$-term above, and therefore
\begin{equation}\label{eqn4.59.001}
  \cH(|x'|,x_n)=
  \frac{c_{n,s}}{b(x_n^{\frac 1s}+|x'|^{2})^{\frac{n-1}{2}}}\left[\frac{x_n}{|x'|^{2s}}\right]\left(1+
O\left(\left[\frac{x_n}{|x'|^{2s}}\right]+|x'|^{2(1-2s)}\right)\right).
\end{equation}
\end{proof}

\begin{rmk}
The leading order singularity  in the upper half space is $
\frac{c_{n,s}}{b(x_n^{\frac 1s}+|x'|^{2})^{\frac{n-1}{2}}};$  in the lower
half space it is the weaker singularity
$\frac{c'_{n,s}}{b(x_n^2+|x'|^{2})^{\frac{n-2+2s}{2}}}$ that dominates. The
formula in~\eqref{eqn4.59.001} shows that the coefficient of the stronger
singularity vanishes as $\frac{x_n}{|x'|^{2s}}$ tends to zero. Notice also that
  the homogeneities of the leading singularities differ in the half planes. 
\end{rmk}

\appendix

\section{The analysis of $p^0_s(x';x_n)$ }
The fact that the integrand in~\eqref{eqn4.11} has
only a single pole in the upper half planes shows that the behavior of
the $\tau$-integral is quite different for $x_n>0$ and $x_n<0.$  As noted this echoes the
behavior of the kernel, $p^0_s(x';x_n),$ for the heat operator
$e^{-x_n(-\Delta_{\RR^{n-1}})^s},$ which vanishes identically in the set
$\{(x',x_n):\: x_n<0\}.$ In this appendix we give a precise description of this
heat kernel. 

An elementary calculation shows that where $x_n>0$ we have:
\begin{equation}\label{eqnA1.004}
\begin{split}
  p^0_s(x';x_n)&=\frac{2\pi
    c_n\omega_{n-2}}{|x'|^{\frac{n-3}{2}}}\int\limits_{0}^{\infty}
J_{\frac{n-3}{2}}(r|x'|) r^{\frac{n-1}{2}}e^{-x_nr^{2s}}dr\\
&=
\frac{2\pi
    c_n\omega_{n-2}}{|x'|^{n-1}}\int\limits_{0}^{\infty}
J_{\frac{n-3}{2}}(r) r^{\frac{n-1}{2}}e^{-\frac{x_n}{|x'|^{2s}}r^{2s}}dr.
\end{split}
\end{equation}
This kernel is an-isotropically homogeneous:
\begin{equation}
  p^0_s(\mu x';\mu^{2s}x_n)=\mu^{-(n-1)}p^0_s(x';x_n).
\end{equation}
When $x_n/|x'|^{2s}$ is bounded away from zero, we can simply expand the Bessel
function in a power series to obtain that:
\begin{equation}\label{eqn4.15.001}
  p^0_s(x';x_n)=\frac{1}{|x_n|^{\frac{n-1}{2s}}}\sum_{j=0}^{\infty}b_j\left(\frac{|x'|^2}{x_n^{\frac{1}{s}}}\right)^{j},
\end{equation}
which is a smooth function of $|x'|^2/x_n^{\frac{1}{s}},$ where $x_n>0.$

A priori, it appears much subtler to deduce the behavior of $p^0_s(x';x_n)$ as
$x_n/|x'|^{2s}$ tends to zero.  We could use an argument similar to that used
in the proof of Proposition~\ref{prop4.5}, but there is an alternate
approach, using the notion of subordination, which we have decided to
employ. Using it we derive a second formula for $p^0_s$ from which these
asymptotics are more transparent. Indeed, this second formula gives a method to
compute the asymptotics of the integrals
\begin{equation}
  \int\limits_{0}^{\infty}
J_{\frac{n-3}{2}}(r) r^{\frac{n-1}{2}}e^{-\lambda r^{2s}}dr
\end{equation}
as $\lambda\to 0^+,$ for $0<s<1.$ These asymptotics are, by no means obvious,
as they result from global cancellations taking place within the integral. As
this analysis does not appear to be in the literature we pause to consider this
question. Our result is:
\begin{prop} For  $\nu\geq 0$ and $0<s<1,$ there exists a
  sequence $\{\mu_{sk}\}$ so that
  \begin{equation}\label{eqn4.12.001}
    \int\limits_{0}^{\infty}
J_{\nu}(r) r^{\nu+1}e^{-\lambda r^{2s}}dr\sim \sum_{k=1}^{\infty}\mu_{sk}\lambda^{k},
  \end{equation}
as $\lambda\to 0^+.$
\end{prop}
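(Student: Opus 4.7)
The plan is to use subordination to convert the integrand into a Gaussian Bessel transform with explicit closed form, and then extract the $\lambda\to 0^+$ asymptotic from the known large-argument behavior of the stable subordinator density. For $0<s<1$, let $g_s$ denote the probability density on $(0,\infty)$ of the $s$-stable subordinator, characterized by $\int_0^\infty e^{-yu}g_s(u)\,du=e^{-y^s}$ for $y\geq 0$. Taking $y=\lambda^{1/s}r^2$ yields the representation $e^{-\lambda r^{2s}}=\int_0^\infty e^{-\lambda^{1/s}ur^2}g_s(u)\,du$. Substituting this into the integral of interest, applying Fubini (justified by absolute convergence for $\lambda>0$), and using the classical identity
\begin{equation*}
\int_0^\infty J_\nu(r)\,r^{\nu+1}e^{-pr^2}\,dr=\frac{1}{(2p)^{\nu+1}}\,e^{-1/(4p)}\qquad(p>0)
\end{equation*}
with $p=\lambda^{1/s}u$, reduces the problem to
\begin{equation*}
I(\lambda):=\int_0^\infty J_\nu(r)\,r^{\nu+1}e^{-\lambda r^{2s}}\,dr=\frac{1}{(2\lambda^{1/s})^{\nu+1}}\int_0^\infty\frac{g_s(u)}{u^{\nu+1}}\,e^{-1/(4\lambda^{1/s}u)}\,du.
\end{equation*}

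The substitution $v=1/(4\lambda^{1/s}u)$ then turns this into a Laplace-type integral in $v$ with the $\lambda$-dependence hidden inside $g_s$:
\begin{equation*}
I(\lambda)=\frac{2^{\nu-1}}{\lambda^{1/s}}\int_0^\infty v^{\nu-1}e^{-v}\,g_s\!\left(\frac{1}{4\lambda^{1/s}v}\right)dv.
\end{equation*}
As $\lambda\to 0^+$ the argument of $g_s$ tends to infinity on any fixed positive $v$-interval, so the asymptotics are governed by the behavior of $g_s$ at infinity. Pollard's representation (equivalently, formal Laplace inversion of the Taylor series $e^{-y^s}=1+\sum_{k\geq 1}(-1)^ky^{ks}/k!$) yields
\begin{equation*}
g_s(u)\sim\sum_{k=1}^\infty\frac{(-1)^{k+1}}{k!\,\Gamma(-ks)}\,u^{-ks-1}\qquad\text{as }u\to\infty.
\end{equation*}

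Inserting this expansion, using $u^{-ks-1}=(4\lambda^{1/s}v)^{ks+1}$, and integrating term-by-term against $v^{\nu-1}e^{-v}\,dv$ via Watson's lemma, each term contributes a factor $\lambda^{(ks+1)/s}\cdot\lambda^{-1/s}=\lambda^k$, producing the expansion
\begin{equation*}
\mu_{sk}=\frac{(-1)^{k+1}\,2^{\nu+2ks+1}\,\Gamma(\nu+ks+1)}{k!\,\Gamma(-ks)};
\end{equation*}
when $ks\in\NN$ the factor $1/\Gamma(-ks)$ vanishes, so no logarithmic terms appear. The conceptual content of the proposition is that the fractional exponent $\lambda^{1/s}$ coming from the Bessel-Gaussian scaling and the fractional tail $u^{-ks}$ of the stable density conspire to cancel exactly, leaving only integer powers of $\lambda$. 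The main obstacle will be to make the term-by-term use of Watson's lemma rigorous: one needs a uniform remainder estimate of the form $|g_s(u)-\sum_{k=1}^N c_k u^{-ks-1}|\leq C_Nu^{-(N+1)s-1}$ for $u$ large, which is available from Pollard's convergent series, together with a check that the small-$v$ regime, where $g_s$ is evaluated at enormously large argument, contributes only exponentially small errors (via the rapid decay of the $u^{-ks-1}$ terms and the $e^{-v}$ weight).
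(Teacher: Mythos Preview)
Your argument is correct and is essentially the same as the paper's: both use subordination via the stable density $g_s=\Phi_s$ (defined by $e^{-y^s}=\int_0^\infty e^{-yu}\Phi_s(u)\,du$), the Weber--Schafheitlin identity $\int_0^\infty r^{\nu+1}J_\nu(r)e^{-\alpha r^2}\,dr=(2\alpha)^{-\nu-1}e^{-1/(4\alpha)}$, and the large-$u$ expansion of $\Phi_s$. The only cosmetic differences are that the paper derives the expansion $\Phi_s(x)=\frac{1}{\pi}\sum_{j\ge 1}\frac{(-1)^{j-1}\Gamma(sj+1)\sin(\pi sj)}{j!}\,x^{-sj-1}$ directly by deforming the Laplace-inversion contour to $\theta=\pi$ (your ``Pollard series''), and it reads off the expansion straight from the $\sigma$-integral rather than changing variables to $v$ and invoking Watson's lemma; note also that for $0<s<1$ this series is genuinely convergent for all $x>0$, so the uniform remainder bound you flag as the main obstacle is in fact automatic.
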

\begin{proof}
Let $\Phi_s(x)$ be the function defined by the Laplace transform formula:
\begin{equation}\label{eqn4.13.001}
  e^{-\lambda^s}=\int\limits_{0}^{\infty}\Phi_s(x)e^{-\lambda x}dx.
\end{equation}
The Laplace inversion formula shows that, for any $\sigma\in [0,\infty),$ we have
\begin{equation}\label{eqn4.16.003}
  \Phi_s(x)=\frac{1}{2\pi i}\int\limits_{\sigma-i\infty}^{\sigma+i\infty}e^{xz-z^s}dz,
\end{equation}
where $z^s,$ defined on $\CC\setminus (-\infty,0],$ is normalized to be a
positive real number for $z\in (0,\infty).$ 

If $L$ is a positive operator generating a
positivity preserving semigroup and $0<s<1,$ then
\begin{equation}
  e^{-tL^s}=\int\limits_{0}^{\infty}\frac{e^{-x L}}{t^{\frac 1s}}\Phi_s\left(\frac{x}{t^{\frac 1s}}\right)dx,
\end{equation}
see~\cite[\S IX.11]{Yosida}. Using~\eqref{eqn4.13.001} and~\cite[Formula 6.631.4]{GrRhy}, 
\begin{equation}
  \int\limits_{0}^{\infty}x^{\nu +1}J_{\nu}(x)e^{-\alpha
    x^2}dx=\frac{1}{(2\alpha)^{\nu+1}}
\exp\left(\frac{-1}{4\alpha}\right),
\end{equation}
we easily prove:
\begin{lem}
  For $0<s<1$ and $0\leq \nu,$ there is a constant $C_{\nu}$ so that the identity
  \begin{equation}\label{eqn4.16.001}
    \int\limits_{0}^{\infty}\frac{\Phi_s(\sigma)e^{-\frac{1}{4\sigma\lambda^{\frac
            1s}}}d\sigma}{(\lambda^{\frac 1s}\sigma)^{\nu+1}}=
C_{\nu}\int\limits_{0}^{\infty}J_{\nu}(\tau)e^{-\lambda\tau^{2s}}\tau^{\nu+1}d\tau
  \end{equation}
holds for $\lambda$ with $\Re\lambda>0.$
\end{lem}

From formula~\eqref{eqn4.16.003} it is clear that
$\Phi_s(x)$ is in $\cC^{\infty}(\RR)$ and vanishes where $x<0.$ For real,
positive $x,$ we can replace this formula with
\begin{equation}
   \Phi_s(x)=\Re\left[\frac{1}{\pi
       i}\int\limits_{0}^{\infty}e^{xe^{i\theta}r-(e^{i\theta}r)^s}e^{i\theta} dr\right]
\end{equation}
for $\theta\in [\frac{\pi}{2},\pi].$ Taking $\theta=\pi,$ and using the
Taylor series for $e^{-e^{is\pi}r^s}$ gives,  for $x>0,$ that
\begin{equation}
  \Phi_s(x)=\Re\left[\frac{i}{\pi}
       \int\limits_{0}^{\infty}e^{-xr-e^{is\pi}r^s}dr\right]
			=\frac{1}{\pi}\sum_{j=1}^{\infty}
\frac{(-1)^{j-1}\Gamma(sj+1)\sin\pi s j}{j!}\frac{1}{x^{sj+1}}.
\end{equation}
The asymptotic expansion in~\eqref{eqn4.12.001} follows
from~\eqref{eqn4.16.001} and this expansion.
\end{proof}
\begin{rmk} The identity in~\eqref{eqn4.16.001} trades the $\lambda$ in the
  exponent on the right hand side for $1/\lambda^{\frac 1s}$ on the left hand
  side. This in turn allows us to trade the very subtle, globally determined
  asymptotic behavior of the integral on the right hand side as $\lambda\to
  0^+,$ for the much simpler asymptotic analysis, as $1/\lambda^{\frac
    1s}\to\infty$ on the left. Paying somewhat closer attention to the
  coefficients in the expansion shows that if $2s<1,$ then the infinite sum on
  the right hand side of~\eqref{eqn4.12.001}, given by,
  \begin{equation}
    \frac{1}{\pi}\sum_{j=1}^{\infty}\frac{\Gamma(sj+1)\Gamma(sj+\nu+1)\sin(s\pi j)}{j!}(-1)^{j-1}(4^s\lambda)^{j},
  \end{equation}
 is an entire function. If $s=1/2,$ then this series has a finite radius of
 convergence, and if $1/2<s<1,$ then it is an asymptotic series.
\end{rmk}

Using this result we see from~\eqref{eqnA1.004} and~\eqref{eqn4.12.001} that,
as $x_n/|x'|^{2s}\to 0,$ we have:
\begin{equation}
  p^0_s(x';x_n)\sim \frac{\chi_{[0,\infty)}(x_n)}
{|x'|^{n-1}}\left(\frac{c_n x_n}{|x'|^{2s}}+O\left[\frac{x_n}{|x'|^{2s}}\right]^2\right).
\end{equation}
The asymptotic result can also be obtained using standard microlocal
techniques.  Note, however, that our formula is valid whether or not both $|x'|$
and $x_n$ tend to zero, and shows that away from $x'=0$ this kernel vanishes to
order 1 as $x_n\to 0^+.$ It also shows that for fixed $x_n>0,$ this kernel
behaves like $1/|x'|^{n-1+2s}$ as $|x'|$ tends to infinity. Combining this
formula with~\eqref{eqn4.15.001} establishes the following result.
\begin{prop}
  The kernel $p^0_s(x';x_n)$  for the operator
  $e^{-x_n(-\Delta_{\RR^{n-1}})^s},$ has the following representation
  \begin{equation}
p^0_s(x';x_n)=
\chi_{[0,\infty)}(x_n)\frac{H^+\left(\frac{|x'|^2}{x_n^{\frac{1}{s}}}\right)}
{(x_n^{\frac{1}{s}}+|x'|^2)^{\frac{n-1}{2}}},
  \end{equation}
where $H^+(\lambda)$ is a smooth positive function in $[0,\infty)$ with an
expansion of the form
\begin{equation}
  H^+(\lambda)=\frac{c_n}{\lambda^s}\left( 1+O(1/\lambda^s)\right),
\end{equation}
as $\lambda\to\infty.$
\end{prop}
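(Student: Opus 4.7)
The plan is to exploit the anisotropic homogeneity $p^0_s(\mu x';\mu^{2s}x_n)=\mu^{-(n-1)}p^0_s(x';x_n)$ to collapse the two-variable kernel (for $x_n>0$) to a one-variable function of $\lambda:=|x'|^2/x_n^{1/s}$. Taking $\mu=x_n^{-1/(2s)}$ gives
\[
p^0_s(x';x_n)=x_n^{-(n-1)/(2s)}F(\lambda),\qquad F(\lambda):=p^0_s(\sqrt{\lambda}\,e;1)
\]
for any unit vector $e\in\RR^{n-1}$. The integral representation \eqref{eqnA1.004} immediately shows that $F\in\cC^{\infty}((0,\infty))$ and $F>0$ (by positivity of the subordinator $\Phi_s$ and the heat kernels it convolves).

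Next I collect the two regimes, both already established earlier in the appendix. First, the power series expansion \eqref{eqn4.15.001} identifies $F(\lambda)=\sum_{j\ge 0}b_j\lambda^j$ in a neighborhood of $0$, so $F$ extends to a $\cC^{\infty}$ function on $[0,\infty)$ that is in fact real-analytic near the origin. Second, the asymptotic display preceding the proposition reads $p^0_s(x';x_n)\sim c_n x_n/|x'|^{n-1+2s}$ as $x_n/|x'|^{2s}\to 0^+$; translating via $x_n^{(n-1)/(2s)}=\mu^{(n-1)/(2s)}|x'|^{n-1}$ with $\mu=x_n/|x'|^{2s}=\lambda^{-s}$ yields
\[
F(\lambda)\sim c_n\,\lambda^{-(n-1)/2-s}\bigl(1+O(\lambda^{-s})\bigr),\qquad \lambda\to\infty,
\]
where the subleading correction comes from the $O([x_n/|x'|^{2s}]^2)$ term.

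Now define
\[
H^+(\lambda):=(1+\lambda)^{(n-1)/2}F(\lambda).
\]
It is smooth and positive on $[0,\infty)$, and using $x_n^{(n-1)/(2s)}(1+\lambda)^{(n-1)/2}=(x_n^{1/s}+|x'|^2)^{(n-1)/2}$ we immediately obtain
\[
p^0_s(x';x_n)=\chi_{[0,\infty)}(x_n)\,\frac{H^+(|x'|^2/x_n^{1/s})}{(x_n^{1/s}+|x'|^2)^{(n-1)/2}},
\]
the indicator coming from the fact that $p^0_s$ is the kernel of a semigroup supported in $\{x_n>0\}$. The asymptotic $H^+(\lambda)=c_n\lambda^{-s}(1+O(\lambda^{-s}))$ follows at once from $(1+\lambda)^{(n-1)/2}\sim\lambda^{(n-1)/2}$ and the decay of $F$.

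The proof is essentially bookkeeping: the real analytic content, namely the small-$\lambda$ convergent expansion and the large-$\lambda$ asymptotic (the $\cH_{\pr}$-style stationary phase calculation), has already been carried out in the appendix. The only point requiring minor care is verifying that the function $F$ obtained from the analytic series near $0$ and the integral representation on $(0,\infty)$ agree — this is automatic since both equal $x_n^{(n-1)/(2s)}p^0_s$ on their common domain — and that the subleading error in the $\lambda\to\infty$ expansion is genuinely $O(\lambda^{-s})$, which is the strongest term visible in \eqref{eqn4.59.001} after removing the leading singularity factor. No step is a serious obstacle.
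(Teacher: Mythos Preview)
Your proof is correct and follows exactly the approach the paper indicates (``Combining this formula with~\eqref{eqn4.15.001} establishes the following result''): you make explicit the reduction via homogeneity to a one-variable function and then read off smoothness at $\lambda=0$ from~\eqref{eqn4.15.001} and the $\lambda\to\infty$ asymptotics from the displayed formula immediately preceding the proposition. One minor correction: the relevant large-$\lambda$ input is that display (obtained via subordination, not stationary phase), not~\eqref{eqn4.59.001}, which concerns the residue term $\cH$ for the full Green's kernel rather than $p^0_s$.
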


%
%
\def\cprime{$'$} \def\polhk#1{\setbox0=\hbox{#1}{\ooalign{\hidewidth
  \lower1.5ex\hbox{`}\hidewidth\crcr\unhbox0}}} \def\cprime{$'$}
  \def\cprime{$'$} \def\cprime{$'$}
  \def\lfhook#1{\setbox0=\hbox{#1}{\ooalign{\hidewidth
  \lower1.5ex\hbox{'}\hidewidth\crcr\unhbox0}}} \def\cprime{$'$}
  \def\cprime{$'$} \def\cprime{$'$} \def\cprime{$'$} \def\cprime{$'$}
\providecommand{\bysame}{\leavevmode\hbox to3em{\hrulefill}\thinspace}
\providecommand{\MR}{\relax\ifhmode\unskip\space\fi MR }
\providecommand{\MRhref}[2]{%
  \href{http://www.ams.org/mathscinet-getitem?mr=#1}{#2}
}
\providecommand{\href}[2]{#2}


\end{document}